\newcommand*\Z{\mathbb{Z}}
\newtheorem{De}{Definition}[section]
\newtheorem{Th}[De]{Theorem}
\newtheorem{Pro}[De]{Proposition}
\newtheorem{Le}[De]{Lemma}
\newtheorem{Co}[De]{Corollary}
\newtheorem{Ex}[De]{Example}
\numberwithin{equation}{subsection}
\def\xto#1{\xrightarrow[]{#1}}
\def\id{\sf Id}
\def\t{\otimes }
\def\et{{ \circledast}}
\def\ep{{ \vee}}
\def\Ec{\mathcal{E} }
\def\Z{\mathbb{Z} }
\def\mor{{\sf \Hom}_{\mathsf{El}}}
\def\maps{\mathsf{Maps}}
\def\1{^{-1}}
\def\aut{\mathsf{Aut}}
\def\aute{\mathsf{Aut_E}}
\def \Hom{\mathop{\sf Hom}\nolimits}
\def\Ell{\bf Ell_{GR}}
\begin{document}
	
\title{Elliptic groups and rings}

\author[I. Pirashvili]{Ilia Pirashvili}
\address{Institut f\"ur Mathematik\\
	Universit\"at Augsburg\\
	Universit\"atsstr. 2\\ 
	86159 Augsburg}
\email{\href{mailto:ilia.pirashvili@math.uni-augsburg.de}{ilia.pirashvili@math.uni-augsburg.de} \ \ \ Alternatively: \href{mailto:ilia\_p@ymail.com}{ilia\_p@ymail.com}}
\maketitle

\begin{abstract} As it is well known, one can define an abelian group on the points of an elliptic curve, using the so called chord-tangent law \cite{dale}, and a chosen point. However, that very chord-tangent law allows us to define a rather more obscure algebraic structure, which we call an elliptic group, on the points of an elliptic curve. In the cases when our curve has a so called flex point (intersection number with the tangent is $3$), the classical abelian group and the elliptic group carry the same information. However, if our curve does not have such a point (which often happens over $\mathbb{Q}$), the abelian group is not enough to recover the elliptic group.

The aim of this paper is to study this algebraic structure in more detail, its connections to abelian groups and at the very end even introduce the notion of an elliptic ring (a monoid object in the category of elliptic groups).
\end{abstract}

\section{Introduction}\label{sec1}

Let $\Ec$ be an elliptic curve defined over a field $K$. It is well known that the set $\Ec(K)$ of $K$-points has a group structure, denoted by $+$. The definition of $+$ is based on a binary operation $\ast$, called chord-tangent law \cite{dale}, and a chosen point $0\in \Ec(K)$. The chord-tangent law satisfies some identities spelled out in \cite{eltate}. This gives rise to an algebraic structure which we call an elliptic group (see Definition \ref{21}). Any elliptic group, together with a chosen point $o$, gives rise to an abelian group by  setting $a+b:=o\ast (a\ast b)$. It is well known that the choice of the distinguished point is not of paramount importance, since the corresponding groups are isomorphic. However, after passing from an elliptic group to the corresponding abelian group, we loose some information, since non-isomorphic elliptic groups could have isomorphic associated abelian groups. Such situations can be realised geometrically, see Example \ref{exa}. This motivates us to investigate the algebraic properties of elliptic groups.

We will show that elliptic groups are closely related to abelian groups. We describe free elliptic groups, automorphisms of various elliptic groups and we give a full list of finitely generated indecomposable elliptic groups. We further show that any finitely generated elliptic group is isomorphic to a product of indecomposable ones. In many cases, we give the construction of the coproduct of elliptic groups explicitly, which, unlike for abelian groups, differs from the product.

When I finished this work, I found that elliptic groups were studied by many authors under the name ''totally symmetric and entropic quasigroups'', see \cite{schwenk} and the literature cited therein. Results from Sections 2, 3 and 6 are known \cite{schwenk}, Corollary 4.3 is also proven in \cite{schwenk}. I included detailed account of these facts for the convenience of the reader and to have it all expressed in terms of elliptic groups. To the 

The content of this paper is organised as follows: In Section \ref{sec2}, we solve an exercise from the textbook \cite{eltate}, which is the basic motivation to introduce elliptic groups. Section \ref{sec3} establishes a relationship between abelian and elliptic groups. In Section \ref{sec4} we examine the morphisms and isomorphisms of elliptic groups. We proceed in Section \ref{sec5} to study the case of flex points. That is, when a point $o$ is given, such that $o\ast o=o$. In the following section, we equip the homset $\mor(S,T)$ with the structure of an elliptic curve. In Section \ref{sec7} we give a brief result about congruences. The proceeding section gives an explicit construction of free elliptic groups. Section \ref{sec9} deals with the study of finitely generated elliptic groups and the direct product decomposition of such objects. In Section \ref{sec10} we study coproducts and give several explicit constructions. We then study the automorphism groups in Section \ref{sec11} and in Section \ref{sec11}, we consider the tensor product. This allows us to define elliptic rings, which we study in more detail in Section \ref{sec13}, including the notion of primes and the fundamental theorem of arithmetic.

\section{An exercise of Silverman-Tate}\label{sec2}

Except for the name, the following definition goes back to \cite[p34]{eltate} Exercise 1.11. 
\begin{De}\label{21} An elliptic group is a set $S$ equipped with a binary operation $\ast$ satisfying the properties 
\begin{itemize}
	\item {\rm EG1)} $x\ast y=y\ast x$. 
	\item {\rm EG2)} $x\ast (x\ast y) =y$.
	\item {\rm EG3)} $x\ast (y\ast (z\ast w))= w\ast (y\ast (z\ast x))$.
\end{itemize}
\end{De}

\begin{Ex} Let $\Ec$ be an elliptic curve defined over a field $K$. Take two points $P,Q\in \Ec(K)$. The line $\mathcal{L}_{P,Q}$ passing through $P$ and $Q$ intersects $\Ec$ on a third (not necessarily distinct) point, denoted by $P\ast Q$. Thanks to \cite[Exercises 
1.10]{eltate}, one obtains an elliptic group in this way.
\end{Ex}

A morphism of elliptic groups $f:S\to T$ is a map, such that $f(x\ast y)=f(x)\ast f(y)$. The category of elliptic groups is denoted by $\Ell$.

The following is an exercise in \cite[Exercises 1.11]{eltate}.

\begin{Pro}\label{12} Let $S$ be a nonempty elliptic group. 

i) Fix an element $c$ in $S$ and define
$$x+_c y:= c \ast (x \ast y).$$
The pair $(S,+_c)$ is an abelian group with $c$ as the zero element. The inverse of an element $x$ is $ x \ast (c\ast c)$, thus $$-_c x=x \ast (c\ast c).$$

ii) If $d$ is another point, the map $\alpha:S\to S$, given by
$$f(x)=c\ast (x\ast d),$$
is a group isomorphism $(S,+_c)\to (S,+_d)$.
\end{Pro}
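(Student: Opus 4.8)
The plan is to verify the two assertions directly from the axioms EG1--EG3, treating part i) first and then leveraging it for part ii).

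\textbf{Part i).} First I would check commutativity of $+_c$, which is immediate: $x+_c y = c\ast(x\ast y) = c\ast(y\ast x) = y+_c x$ using EG1 twice. Next, $c$ is a two-sided unit: $c+_c x = c\ast(c\ast x) = x$ by EG2, and commutativity gives the other side. For the inverse, set $-_c x := x\ast(c\ast c)$ and compute $x +_c (x\ast(c\ast c)) = c\ast\bigl(x\ast(x\ast(c\ast c))\bigr)$; applying EG2 to the inner expression $x\ast(x\ast(c\ast c)) = c\ast c$, this becomes $c\ast(c\ast c) = c$ by EG2 again, so it is indeed the zero element. The real work is associativity, and this is where EG3 is needed. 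I would expand
$$(x+_c y)+_c z = c\ast\bigl((x+_c y)\ast z\bigr) = c\ast\bigl((c\ast(x\ast y))\ast z\bigr),$$
and similarly $x+_c(y+_c z) = c\ast\bigl(x\ast(c\ast(y\ast z))\bigr)$. After rewriting using commutativity so the terms are in the shape appearing in EG3 (i.e.\ $u\ast(v\ast(w\ast t))$), the identity EG3 should swap the ``outermost'' and ``innermost'' arguments in exactly the way needed to match the two sides; one may also need to apply EG2 to cancel a repeated $c$ or insert one. The main obstacle is purely bookkeeping: getting both sides into a common normal form of the shape $a\ast(b\ast(c'\ast d))$ so that a single application of EG3 (perhaps combined with one or two uses of EG1 and EG2) closes the gap. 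I expect it is cleanest to prove the ``Reidemeister-type'' identity $a\ast(b\ast c) = $ (something symmetric) as a lemma first, or simply to grind the substitution.

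\textbf{Part ii).} Here I would show the map $\alpha(x) = c\ast(x\ast d)$ is a homomorphism $(S,+_c)\to(S,+_d)$ and is bijective. For the homomorphism property I must check $\alpha(x+_c y) = \alpha(x) +_d \alpha(y)$, i.e.
$$c\ast\bigl((c\ast(x\ast y))\ast d\bigr) \overset{?}{=} d\ast\bigl((c\ast(x\ast d))\ast(c\ast(y\ast d))\bigr).$$
This again reduces to a manipulation using EG1--EG3; the presence of several nested triple-stars means EG3 will likely be invoked once or twice, together with the cancellation EG2 to eliminate paired occurrences of $c$ and $d$. For bijectivity, the natural candidate for the inverse is the map $x\mapsto d\ast(x\ast c)$ (symmetric in the roles of $c$ and $d$): composing $\alpha$ with this candidate and simplifying via EG2 (and EG1) should give the identity on both sides, which simultaneously shows $\alpha$ is injective and surjective. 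Since $\alpha(c) = c\ast(c\ast d) = d$ by EG2, it correctly sends the zero of $(S,+_c)$ to the zero of $(S,+_d)$, consistent with being an isomorphism. As in part i), no conceptual difficulty arises; the only real hurdle is organizing the nested applications of the three axioms so the computation terminates cleanly, and I would present it as a short chain of equalities with each step labeled by the axiom used.
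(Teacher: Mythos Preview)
Your proposal is correct, and for part i) it is essentially identical to the paper's proof: the paper handles commutativity, identity, and inverse exactly as you do, and for associativity it reduces to the single step $x\ast(c\ast(y\ast z)) = z\ast(c\ast(y\ast x)) = (c\ast(x\ast y))\ast z$ via EG3 followed by EG1, which is precisely the ``grind'' you anticipate.

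For part ii) the paper takes a somewhat different and cleaner route than your direct expansion. Rather than attacking the equality $\alpha(x+_c y) = \alpha(x)+_d\alpha(y)$ head-on, the paper first observes that $\alpha(x) = c\ast(x\ast d) = x+_c d$, i.e.\ $\alpha$ is simply translation by $d$ in the group $(S,+_c)$. It then isolates and proves a ``cancellation law'' $(x+_c d)+_d z = x+_c z$ (one application of EG3 plus EG1/EG2), from which the homomorphism property follows immediately by two lines of group arithmetic: $\alpha(x)+_d\alpha(y) = (x+_c d)+_d(y+_c d) = x+_c y+_c d = \alpha(x+_c y)$. This avoids the nested four- and five-fold $\ast$-expressions that your direct verification would produce, and packages the one genuine use of EG3 into a reusable lemma. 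Your approach will work, but the paper's reformulation via $\alpha = (\text{--})+_c d$ makes the computation shorter and more transparent; bijectivity (which the paper leaves implicit) is handled exactly as you suggest, via the evident inverse $x\mapsto d\ast(x\ast c)$.
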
 

\begin{proof} i) The commutativity of $\ast$ implies the commutativity of $+_c$. By EG2, we have
$$c+_dy=c \ast (c\ast y)=y.$$
Thus, $c$ is the zero element of $+_c$. We also have
$$x+_c (x\ast (c \ast c))= c\ast \left ( x\ast (x \ast (c\ast c))\right )=c \ast (c\ast c)
=c.$$
Hence, $x\ast (c \ast c)$ is the inverse of $x$. So far, we did not use condition EG3. We need it to prove the associativity of $+_c$. In fact we have 
$$x+_c(y+_cz) = c\ast \left ( x\ast( c\ast (y\ast z)) \right ).$$
On the other hand
$$(x+_cy)+_c z= c \ast ( (c\ast ( x\ast y))\ast z ).$$

So, for associativity, it suffice to observe that $x\ast( c\ast (y\ast z))= z \ast( c\ast (y\ast x))= (c\ast ( x\ast y))\ast z$, for which we use EG3 and then EG1.

ii) We have to show that
$$f(x+_c y)=f(x)+_d f(y).$$
To this end, observe that $f(x)=x+_cd$. Thus
$$f(x+_cy)=x+_cy+_cd.$$
To proceed further let us show the following cancellation law:
$$(x+_cd)+_dz=x+_cz.$$
We have
\begin{align*}
(x+_cd)+_dz=&d\ast \left ( (c\ast (x\ast d))\ast z\right )\\ =&d\ast (z\ast ((x\ast d)\ast c))\\
=& c\ast (z\ast ((x\ast d)\ast d)).
\end{align*}
Here we used EG3. The axioms EG1 and EG2 yield
$$ (x\ast d)\ast d=d \ast (d\ast x)=x.$$
Thus we obtain
$(x+_cd)+_dz=c\ast (z\ast x)=z+_cx,$
which is the cancellation law. Based on this, we compute
$$f(x)+_d f(y)=(x+_c d)+_d(y+_cd)=x+_c y+_c d.$$
Comparing our computations for $f(x)+_d f(y)$ and $f(x+_cy)$ above, we see that they are equal.
\end{proof}

Thus, we can talk about the group associated to an elliptic group. In what follows we write $x-_cy$ instead of $x+_c (-_c y)$, which equals to $c\ast (x\ast (y\ast (c\ast c)))$.

\begin{Ex}\label{exa} Let us consider the elliptic curves $\Ec_1$ and $\Ec_2$, defined over $\mathbb{F}_7$ by the equations
$$ x^3+2y^3=3$$
and 
$$ y^2=x^3+2$$
respectively. In both examples there are $9$ points: $$\Ec_1(\mathbb{F}_7)=\{(1,1), (2,1),(4,1),(1,2),(2,2),(4,2),(1,4),(2,4),(4,4)\}$$
and
$$\Ec_1(\mathbb{F}_7)=\{\infty, (0,3),(0,4),(3,1),(3,6),(5,1),(5,6),(6,1),(6,6)\},$$
where $\infty=(0:1:0)$ in the projective notation. Let us choose $\mathcal{O}=(1,1)\in \Ec_1(\mathbb{F}_7)$ and $\infty \in \Ec_2(\mathbb{F}_7)$. The corresponding abelian groups are isomorphic to $(\Z/3\Z)^2$ but as elliptic groups they are not isomorphic. In fact, in the second case we have $\infty \ast \infty=\infty $, while in the first case, there is not a single $p$ satisfying $P\ast P= P$. Such points are called flex points and we will discuss more about them in Section \ref{sec5}.
\end{Ex}

We will come back to this example in Example \ref{85}. We will use the following easy fact several times. 

\begin{Le} Let $f:S\to S'$ be a morphism of elliptic groups. The map $f$ is a group homomorphism $(S,+_c)\to (S',+_{f(c)})$, for any element $c\in S$.
\end{Le}

\section{Relations with abelian groups}\label{sec3}

\subsection{General remarks} According to Section \ref{sec1}, we can associate an abelian group to an elliptic group, by singling out an element. Our first result in this section shows that any abelian group can be obtained in such a manner. This is based on a simple construction which creates an elliptic group, starting with an abelian group with a chosen element. Moreover, any elliptic group can be obtained in this way. So, elliptic groups and abelian groups are closely related. As we will show, the isomorphism classes of abelian groups and elliptic groups are, however, not the same. More precisely, nonisomorphic elliptic groups could have isomorphic associated abelian groups and the aim of this section is to understand this phenomena more closely. 

Before we proceed further, let us observe that the theory of elliptic groups is an example of universal algebras and therefore, all known notions and results are applicable. In particular, we can talk not only about morphisms of elliptic groups, but also about elliptic subgroups, free elliptic groups, congruences and so on.

It is a consequence of the general facts from the theory of universal algebras, that the category $\Ell$ posses all limits and colimits and also, the fact that limits can be computed "componentwise". In particular, if $S$ and $T$ are elliptic groups, then $S\times T$ is also an elliptic group with the operation being defined by
$$(s,t){\ast }(s',t'):=(s\ast s',t\ast t').$$
The forgetful functor $\Ell\to Sets$ has the left adjoint. The value of this functor on a set $S$ is known as the free elliptic group generated by $S$. We will give an explicit construction of these objects in Section \ref{5free}.

\subsection{Reformulation of axioms of elliptic groups}\label{sec21}

Let $S$ be an elliptic group. For an element $a\in S$, denote by $\ell_a$ the map $S\to S$, given by $x\mapsto a\ast x$. Axiom EG2 can be equivalently stated as
$$\ell_a^2=id.$$
This can again be restated as:
$$\ell_a \ \ {\rm is \ a \ bijection \ and} \ \ \ell_a^{-1}=\ell_a.$$
Axiom EG1 can be formulated as follows: for all $a,b\in S$ one has
$$\ell_a(b)=\ell_b(a).$$
It follows that the family of maps $\ell_a$ are totally distinct, meaning that if $\ell_a$ and $\ell_b$ have the same value on at least one point, then they are equal. In fact, if $\ell_a(x)= \ell_b(x)$ for some $x$, then $\ell_x(a)=\ell_x(b)$ and since $\ell_x$ is a bijection, we have $a=b$. 

\subsection{Elliptic groups from abelian groups}

We start by constructing elliptical groups from abelian groups. 

\begin{Le}\label{ab-el} i) Let $A$ be an additively written abelian group. Fix an element $a$ and define
$$x\ast_a y:=a-x-y.$$
Then $(A,\ast_a)$ is an elliptic group, denoted by $_aA$. 

ii) If $f:A\to B$ is a group homomorphism and $a\in A$, then $f$ is also a morphism of elliptic groups $(A,\ast_a)\to (B, \ast_{f(a)}).$

iii) If $c$ is an element of $A$, the abelian group $(A,+_c)$, associated to the elliptic group $_aA$, is isomorphic to $A$. In fact, this group coincides with $A$ if $c=0$.
\end{Le}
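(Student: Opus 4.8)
The plan is to verify each of the three claims directly from the definitions, since the statement is essentially a collection of routine (but instructive) computations in an abelian group.

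For part (i), I would check the three axioms EG1--EG3 for the operation $x\ast_a y = a - x - y$ in turn. Commutativity EG1 is immediate since addition in $A$ is commutative: $a-x-y = a-y-x$. For EG2, I would compute $x\ast_a(x\ast_a y) = a - x - (a - x - y) = a - x - a + x + y = y$. For EG3, I would expand both sides: $x\ast_a(y\ast_a(z\ast_a w))$ unravels to $a - x - \bigl(a - y - (a - z - w)\bigr) = a - x - a + y + a - z - w = a - x + y - z - w$, which is visibly symmetric in $x$ and $w$, hence equals $w\ast_a(y\ast_a(z\ast_a x))$. So $(A,\ast_a)$ is an elliptic group.

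For part (ii), given a homomorphism $f\colon A\to B$, I would simply compute $f(x\ast_a y) = f(a-x-y) = f(a) - f(x) - f(y) = f(x)\ast_{f(a)} f(y)$, using additivity of $f$; this is exactly the morphism condition.

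For part (iii), I would unwind the definition of $+_c$ from Proposition \ref{12} applied to the elliptic group ${}_aA$: here $x +_c y = c \ast_a (x\ast_a y) = c - (a - x - y) = x + y + (c - a)$. When $c = 0$ this gives $x+_0 y = x + y - a$; hmm, that is not literally $x+y$ unless $a=0$, so I should be careful. Let me reconsider: the claim is an isomorphism, and indeed the map $\varphi\colon (A,+_c)\to A$ defined by $\varphi(x) = x + (c-a)$ — equivalently $\varphi(x) = x -_{?}$ — should work. Checking: $\varphi(x+_c y) = (x+y+c-a) + (c-a)$, which does not match $\varphi(x)+\varphi(y) = x+y+2(c-a)$ either. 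The cleanest choice is $\varphi(x) = x - c$: then $\varphi(x +_c y) = x + y + (c-a) - c = x + y - a$, while I want $\varphi(x) + \varphi(y) = x + y - 2c$; these agree iff $a = 2c$, which is false in general. So the correct isomorphism is $\psi\colon A \to (A,+_c)$, $\psi(t) = t + (a - c)$... I would instead just take $\psi(t) := t \ast_a (c\ast_a c)^{?}$ — concretely, since $(A,+_c)$ has underlying set $A$ with $x+_c y = x+y+(c-a)$, the map $\psi(t) = t + a - c$ satisfies $\psi(s) +_c \psi(t) = (s+a-c) + (t+a-c) + (c-a) = s + t + a - c = \psi(s+t)$, so $\psi\colon A \xrightarrow{\ \sim\ } (A,+_c)$ is a group isomorphism. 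When $c = 0$, one has $x +_0 y = x + y - a$, which is the group $A$ shifted, and strictly speaking equals $A$ only in the sense that $0$ is the identity of $(A,+_0)$ precisely when... actually $0 \ast_a(0\ast_a y) = y$ shows $0$ is the $+_0$-zero, and $0 +_0 0 = a\ast_a 0$... I would double-check the intended meaning of ``coincides with $A$ if $c=0$'' against the sign conventions in Lemma \ref{ab-el}, possibly the definition there is $x\ast_a y = a - x - y$ so that $0\ast_0 y$ behaves correctly; in any case the main obstacle is purely bookkeeping of the additive shift, and the substance — that $(A,+_c)$ is just $A$ up to translation — is transparent once $x+_c y$ is computed explicitly.

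The only ``hard'' part is thus part (iii), and it is hard only in the sense of requiring care with which translation realizes the isomorphism and reconciling it with the normalization claim for $c=0$; parts (i) and (ii) are immediate substitutions.
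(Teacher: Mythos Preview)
Your proofs of (i) and (ii) are correct and essentially identical to the paper's.

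In part (iii) you have a genuine computational slip that derails the rest of your argument. You write
\[
x +_c y \;=\; c \ast_a (x\ast_a y) \;=\; c - (a - x - y) \;=\; x + y + (c - a),
\]
but $c \ast_a w = a - c - w$, not $c - w$. The correct computation is
\[
x +_c y \;=\; a - c - (a - x - y) \;=\; x + y - c.
\]
With this in hand everything clears up: for $c = 0$ one literally has $x +_0 y = x + y$, so $(A,+_0)$ \emph{is} $A$ as a group (which is exactly the ``coincides with $A$ if $c=0$'' claim), and for general $c$ the translation $f\colon A \to (A,+_c)$, $f(x) = x + c$, is the desired isomorphism, since $f(x) +_c f(y) = (x+c)+(y+c)-c = x+y+c = f(x+y)$. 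This is precisely the paper's argument. So your approach is the same as the paper's; the only issue is the arithmetic error in applying the definition of $\ast_a$.
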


\begin{proof} i) Commutativity of $\ast_a$ is obvious. Next, we have
$$x\ast _a(x\ast _a y)=a-x-(a-x-y)=a-x-a+x+y=y.$$
Finally, both $x\ast (y\ast (z\ast w))$ and $w\ast (y\ast (z\ast x)))$ are equal to $a+y-x-z-w$.

ii) This is straightforward to check.

iii) For any elements $x,y\in A$, we have
$$x+_cy=c\ast_a(x\ast_a y)=a-c-(a-x-y)=x+y-c.$$
We see that for $c=0$, the group law coincides with the original addition. For general $c$, the map $f:A\to A$, given by $f(x)=x+c$, is an isomorphism of abelian groups $A\to (A,+_c)$.
\end{proof}

\begin{Co} Any abelian group is a group associated to an elliptic group. 
\end{Co}

\begin{proof} This follows from part iii) of Lemma \ref{ab-el}.
\end{proof}

The next result shows that, up to isomorphisms, any elliptic group can be obtained as in Lemma \ref{ab-el}. This is well-known, see {\rm \cite{schwenk}}. 

\begin{Th}\label{2.3} Let $S$ be a nonempty elliptic group and $o\in S$. Denote by $A$ the abelian group $(S,+_o)$. For $c=o\ast o$, the elliptic group $_cA$ is the same as $S$.
\end{Th}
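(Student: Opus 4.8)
The plan is to unwind both sides explicitly and show that the binary operations agree, since the underlying sets are literally the same set $S$. We must prove that for all $x,y \in S$, the operation $\ast_c$ of ${}_cA$ coincides with the original operation $\ast$ of $S$; that is, $x \ast_c y = x \ast y$. By definition of the construction in Lemma~\ref{ab-el}i), $x \ast_c y = c -_o x -_o y$, where $c = o \ast o$ and $-_o$, $+_o$ refer to the abelian group $A = (S,+_o)$. So the whole task reduces to computing the right-hand side back in terms of $\ast$ and simplifying.

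First I would recall from Proposition~\ref{12}i) the explicit formulas: $a +_o b = o \ast (a \ast b)$ and $-_o a = a \ast (o \ast o) = a \ast c$. Then I would compute $c -_o x -_o y$ step by step. Writing it as $(c +_o (-_o x)) +_o (-_o y) = (c +_o (x \ast c)) +_o (y \ast c)$, and then expanding each $+_o$ via $o \ast (\cdot \ast \cdot)$. Since $+_o$ is associative and commutative (Proposition~\ref{12}i)), I have freedom in how to bracket and order the three terms $c$, $x \ast c$, $y \ast c$. A convenient route: note that $c +_o (x \ast c) = o \ast (c \ast (x \ast c))$; now apply EG2 in the form $c \ast (x \ast c) = c \ast (c \ast x) = x$ (using EG1), so $c +_o (x \ast c) = o \ast x$. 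Wait — more carefully, $c \ast (x \ast c)$: by EG1 this is $c \ast (c \ast x)$, and by EG2 with $a = c$ this equals $x$. Hence $c +_o (x \ast c) = o \ast x$. But actually I should double-check this matches $-_o x +_o c$; regardless, by commutativity $c -_o x = c +_o (-_o x) = o \ast x$. It remains to compute $(o \ast x) +_o (y \ast c) = o \ast ((o \ast x) \ast (y \ast c))$, i.e. $o \ast ((o \ast x) \ast (c \ast y))$ after EG1 on the second factor.

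Now the heart of the matter is simplifying $o \ast ((o \ast x) \ast (c \ast y))$ to $x \ast y$, and this is where EG3 does the work — this is the step I expect to be the main obstacle, since it requires choosing the right instantiation of the four-variable identity. Recall $c = o \ast o$, so $o \ast ((o\ast x)\ast(c \ast y)) = o \ast ((o \ast x) \ast ((o \ast o) \ast y))$. I would apply EG3, which in the form $u \ast (v \ast (w \ast z)) = z \ast (v \ast (w \ast u))$ lets me swap the outermost and innermost arguments of a triple-nested expression. Applied to the inner triple $(o\ast x)\ast((o \ast o)\ast y)$ — rewriting it as $(o\ast x) \ast (o \ast (o \ast y))$ — EG3 gives $(o \ast y) \ast (o \ast (o \ast (o \ast x)))$, and EG2 collapses $o \ast (o \ast x)$ to $x$, yielding $(o\ast y) \ast (o \ast x)$. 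Hmm, that gives $o \ast ((o \ast y)\ast(o \ast x))$ which by symmetry is the same expression with $x,y$ swapped, not obviously $x \ast y$; I may instead want to apply EG3 to the full outer triple $o \ast ((o\ast x)\ast((o\ast o)\ast y))$ viewing it as $o \ast (u \ast (v \ast w))$ with $u = o \ast x$, $v = o \ast o$, $w = y$, giving $y \ast ((o \ast x) \ast ((o\ast o)\ast o))$; then $(o \ast o)\ast o = o \ast (o \ast o) = o$ by EG1 and EG2, so this is $y \ast ((o\ast x)\ast o) = y \ast (o \ast (o \ast x)) = y \ast x = x \ast y$, using EG1 and EG2 twice more. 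That closes the argument. I would present the computation as a single \begin{align*}...\end{align*} chain citing EG1, EG2, EG3 and Proposition~\ref{12} at each step, and conclude that $x \ast_c y = x \ast y$ for all $x,y$, so ${}_cA = S$ as elliptic groups, with the bookkeeping that the underlying sets coincide by construction and part iii) of Lemma~\ref{ab-el} is consistent with this choice of $c$.
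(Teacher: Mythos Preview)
Your argument is correct. The key identity $c -_o x = o\ast x$ followed by the EG3 step with $u=o$, $v=o\ast x$, $w=o\ast o$, $z=y$ is exactly right, and the subsequent collapse via EG2 is clean. A minor cosmetic point: when you write up the final version, remove the exploratory ``Wait'' and ``Hmm'' passages and present only the successful chain.

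The route, however, differs from the paper's. The paper works with the translation operators $\ell_a(x)=a\ast x$ and $\lambda_a(x)=a+_ox=\ell_o\ell_a(x)$; from $\ell_a^2=\id$ it extracts the functional equation $\lambda_a\ell_o\lambda_a=\ell_o$, i.e.\ $a+\ell_o(a+x)=\ell_o(x)$, and by specialising $x=c$ obtains $\ell_o(x)=c-x$, hence $a\ast x=\ell_o(a+x)=c-a-x$. Your approach instead expands $c-_ox-_oy$ directly in terms of $\ast$ and simplifies using one well-chosen instance of EG3. Your method is more elementary and self-contained (no operator formalism), while the paper's method isolates the intermediate formula $o\ast x=c-x$ as a reusable byproduct and perhaps explains more transparently why the particular element $c=o\ast o$ arises. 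Both are short; neither is clearly superior.
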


\begin{proof} To simplify the notation, we write $+$ instead of $+_o$. For an element $a$, define the map
$$\lambda_a:S\to S$$
by $\lambda_a(x)=a+x=o\ast(a\ast x)$. Thus, we have
$$\lambda_a=\ell_o\circ \ell_a,$$
which implies $\ell_a=\ell_o\circ \lambda_a$. Since $\ell_a^2=id$, we obtain
$$\ell_o\lambda_a\ell_o \lambda_a=id$$
or
$$\lambda_a\ell_o\lambda_{a}=\ell_o.$$
It follows that for all $x\in S$, we have
$$a+\ell_o(a+x)=\ell_o(x).$$
Denote $$c=\ell_o(o)=o\ast o.$$
Taking $x=c$, we obtain
$$a+\ell_o(a+c)=\ell_o(c)=o\ast (o\ast o)=o.$$
Since $o$ is the zero element for $+$, we see that
$$\ell_a(a+c)=-a.$$
Since $a$ was an arbitrary element we obtain 
$$\ell_o(x)=c-x.$$
Hence
$$\ell_a(x)=\ell_o\lambda_a(x)=\ell_0(a+x)=c-a-x.$$
Thus
$$a\ast x =c-a-x,$$
proving that the operations in $S$ and $_cA$ are the same. 
\end{proof}

\section{Morphisms of elliptic groups}\label{sec4}

If $S,T\in { \Ell}$, the set of morphism from $S$ to $T$ is denoted by $\mor(S,T)$. We use the notation $\Hom$ for the set of group homomorphisms.

\subsection{Morphism of elliptic groups and affine maps of abelain groups}
Let $A$ and $B$ be abelian groups. Recall that a map $f:A\to B$ is called \emph{affine} if the map $f_1:A\to B$ is a group homomorphism, where $f_1(x)=f(x)-f(0)$. Clearly any affine map $f:A\to B$ is of the form $f=f_0+f_1$, where $f_0$ is a constant map and $f_1$ is a group homomorphism. We say that $f_0$ is the \emph{constant} part of $f$ and $f_1$ is the \emph{linear part} of $f$.

\begin{Pro}\label{mor_el} Let $A$ and $B$ be abelian groups and $a\in A$, $b\in B$. Any morphism of elliptic groups $f:\, _aA\to\, _bB$ is affine as a map $A\to B$ 
and hence, of the form
$$f(x)=f_0+f_1(x), \ {\rm for\ all}\ x\in A,$$
where $f_0\in B$ and $f_1:A\to B$ is a group homomorphism.
Moreover, these quantities satisfy the compatibility condition:
\begin{equation}\label{3el}
3f_0+f_1(a)=b.\end{equation}
Conversely, if $f_1:A\to B$ is group homomorphism and $f_0\in B$ is an element for which the compatibility condition (\ref{3el}) holds, then
$$f(x)=f_0+f_1(x)$$
defines a morphism of elliptic groups $f:\, _aA\to\, _bB$.
\end{Pro}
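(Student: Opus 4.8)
The plan is to prove both directions by unwinding the definition of the elliptic operations $\ast_a$ and $\ast_b$ and translating the single condition $f(x\ast_a y) = f(x)\ast_b f(y)$ into the algebraic data $(f_0, f_1)$.

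First I would establish that $f$ is affine. The morphism condition reads $f(a - x - y) = b - f(x) - f(y)$ for all $x,y\in A$. Setting $x = y = 0$ gives $f(a) = b - 2f(0)$, and setting $y = 0$ gives $f(a-x) = b - f(x) - f(0)$. Subtracting these two, $f(a-x) - f(a) = -f(x) + f(0)$, i.e. $f(a) - f(a-x) = f(x) - f(0)$. Now I would define $f_1(x) := f(x) - f(0)$ and $f_0 := f(0)$ and check that $f_1$ is a homomorphism: from the relation just derived, $f_1$ applied to sums can be handled by replacing $x$ with $x+y$ in the general morphism identity and comparing. Concretely, the substitution in $f(a-x-y) = b - f(x) - f(y)$ with generic $x,y$ directly gives, after substituting $f = f_0 + f_1$ on the right and using $f(a-x-y) = f_0 + f_1(a-x-y) = f_0 + f_1(a) - f_1(x+y)$ once $f_1$ is known additive, that $f_1(x+y) = f_1(x) + f_1(y)$; so additivity and the constant/linear decomposition fall out together. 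I would organize this so that the identity $f(a-x-y) = b - f(x) - f(y)$ is rewritten as $f(a-x-y) + f(x) + f(y) = b$ and then I extract additivity of $f_1$ from the case analysis.

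Second, the compatibility condition: taking $x = y = 0$ in the morphism identity gives $f(a) + 2f(0) = b$, and since $f(a) = f_0 + f_1(a)$ and $f(0) = f_0$, this is exactly $3f_0 + f_1(a) = b$, which is (\ref{3el}).

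For the converse, I would simply verify directly: given $f_1$ a homomorphism and $f_0$ with $3f_0 + f_1(a) = b$, set $f(x) = f_0 + f_1(x)$ and compute
\begin{align*}
f(x\ast_a y) &= f(a - x - y) = f_0 + f_1(a) - f_1(x) - f_1(y),\\
f(x)\ast_b f(y) &= b - f(x) - f(y) = b - 2f_0 - f_1(x) - f_1(y).
\end{align*}
These agree precisely when $f_0 + f_1(a) = b - 2f_0$, i.e. when $3f_0 + f_1(a) = b$, which is our hypothesis. Hence $f$ is a morphism of elliptic groups.

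I do not expect any serious obstacle here; the only mildly delicate point is the bookkeeping in the forward direction to extract that $f_1$ is genuinely additive (as opposed to merely affine-compatible), but this is forced by plugging the additive ansatz back into the morphism identity for generic $x, y$ and reading off the result. Everything else is a short direct computation using only the definition of $\ast_a$ from Lemma \ref{ab-el}.
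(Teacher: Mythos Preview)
Your approach is correct and essentially identical to the paper's: both unwind the morphism condition to $f(a-x-y)=b-f(x)-f(y)$, set $f_0=f(0)$, $f_1=f-f_0$, read off $3f_0+f_1(a)=b$ from $x=y=0$, derive $f_1(a-x)=f_1(a)-f_1(x)$ from $y=0$, and then compare the generic identity with this last relation (applied at $x+y$) to force additivity of $f_1$; the converse is the same direct computation in both. One small wording issue: your phrase ``once $f_1$ is known additive'' in the step $f_1(a-x-y)=f_1(a)-f_1(x+y)$ sounds circular, but in fact that equality only uses the already-established relation $f_1(a-z)=f_1(a)-f_1(z)$ with $z=x+y$, so the argument is fine.
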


\begin{proof} A map $f:A\to B$ is a morphism $_aA\to \, _bB$ if
$$f(x\ast_a y )=f(x)\ast_b f(y)$$
holds. In our circumstances, this means
\begin{equation}\label{morp}
f(a-x-y)=b-f(x)-f(y).
\end{equation}
Denote by $f_0$ the value of $f$ at zero, that is $f_0:=f(0)$. We also set $f_1(x)=f(x)-f_0$. Then $f(x)=f_0+f_1(x)$ and $f_1(0)=0$. Based on this, we can rewrite condition (\ref{morp}) as
$$3f_0+f_1(a-x-y)=b-f_1(x)-f_1(y).$$
By putting $x=y=0$, we obtain 
$$3f_0+f_1(a)=b.$$
We want to show that $f_1$ is a group homomorphism, which would imply that $f$ is affine. Replacing $b$ by $3f_0+f_1(a)$, we obtain
\begin{equation}\label{meore}
f_1(a-x-y)=f_1(a)-f_1(x)-f_1(y).
\end{equation}
Putting $y=0$ yields $f_1(a-x)=f_1(a)-f_1(x)$ for all $x$. Finally, replacing $x$ with $x+y$ and comparing it with equation (\ref{meore}) proves that $f$ is a group homomorphism.

Conversely, let $f=f_0+f_1$ be an affine map with $3f_0+f_1(a)=b.$ We have
$$f(x\ast_a y)=f(a-x-y)=f_0+f_1(a-x-y)=f_0+f_1(a)-f(x)-f(y).$$
On the other hand
$$f(x)\ast_b f(y)=b-f(x)-f(y)=b-2f_0-f_1(x)-f_1(y)= f_0+f_1(a)-f_1(x)-f_1(y).$$
Comparing these expressions, we see that $f(x\ast_a y )=(x)\ast_b f(y)$ and the result follows.
\end{proof}

\subsection{Two consequences} As we shall soon see, it might happen that there is no morphism between elliptic groups. The following gives a criteria when this happens. It also gives a criteria when $_aA$ and $_bB$ are isomorphic.

\begin{Co}\label{homiso} Let $A$ and $B$ be abelian groups and $a\in A$, $b\in B$. Then
$$\mor(_aA,\, _bB)\not = \emptyset \ \textnormal{ if and only if} \ \
b\in 3B+\mathcal{H}(a),$$
where $\mathcal{H}(a)\subset B$ is the set of all elements of the form 
$f(a)$, with $f$ running through the set of all group homomorphisms $A\to B$. Moreover, $_aA$ and $_bB$ are isomorphic if and only if
$$b\in 3B+\mathcal{I}(a).$$
Here, $\mathcal{I}(a)$ is the set of all elements of the form $f(a)$, where $f$ runs through the set of all group isomorphisms $A\to B$.
\end{Co}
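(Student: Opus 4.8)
The plan is to deduce both claims directly from Proposition \ref{mor_el}. The key observation is that the proposition describes \emph{every} morphism $f:\,_aA\to\,_bB$ as an affine map $f=f_0+f_1$ whose linear part $f_1$ is an arbitrary group homomorphism $A\to B$ and whose constant part $f_0\in B$ is constrained only by the compatibility condition $3f_0+f_1(a)=b$; moreover, every such pair $(f_0,f_1)$ does arise from a morphism. So the existence of a morphism is equivalent to the solvability, for some group homomorphism $f_1$ and some $f_0\in B$, of $b=3f_0+f_1(a)$, i.e.\ to $b\in 3B+\mathcal{H}(a)$, since as $f_1$ ranges over $\Hom(A,B)$ the element $f_1(a)$ ranges exactly over $\mathcal{H}(a)$ and $3f_0$ ranges over $3B$. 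This gives the first equivalence almost immediately.

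For the isomorphism statement I would first argue that $_aA\cong\,_bB$ as elliptic groups if and only if there is an \emph{isomorphism} of elliptic groups $f:\,_aA\to\,_bB$, and that such an $f$ is necessarily affine (by Proposition \ref{mor_el}) with linear part $f_1$ that is a bijective group homomorphism, hence a group isomorphism $A\to B$. Conversely, I would check that an affine map $f=f_0+f_1$ with $f_1$ a group isomorphism and $3f_0+f_1(a)=b$ is automatically an isomorphism of elliptic groups: it is a morphism by Proposition \ref{mor_el}, and its inverse $x\mapsto f_1^{-1}(x-f_0)$ is affine with linear part $f_1^{-1}$ and constant part $-f_1^{-1}(f_0)$, which one verifies satisfies the compatibility condition $3(-f_1^{-1}(f_0))+f_1^{-1}(b)=a$ using $b=3f_0+f_1(a)$, hence is a morphism $_bB\to\,_aA$ again by Proposition \ref{mor_el}. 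Therefore $_aA\cong\,_bB$ iff there exist a group isomorphism $f_1:A\to B$ and $f_0\in B$ with $b=3f_0+f_1(a)$, which says precisely $b\in 3B+\mathcal{I}(a)$.

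The only mild subtlety — and the step I would be most careful about — is the claim that an elliptic-group isomorphism has bijective \emph{linear} part. This follows because a bijective affine map $f=f_0+f_1$ has bijective $f_1$ (translation by $f_0$ is a bijection, so $f_1=f-f_0$ is a composite of bijections), and a bijective group homomorphism is a group isomorphism; conversely the argument above shows the affine map built from a group isomorphism is an elliptic-group isomorphism. One should also note for the forward direction that if $f:\,_aA\to\,_bB$ is an isomorphism of elliptic groups then its set-theoretic inverse is automatically a morphism of elliptic groups (a standard fact for bijective morphisms of universal algebras), so it suffices to know $f$ itself is an affine bijection. With these points in place the corollary is a direct translation of Proposition \ref{mor_el}, requiring no further computation beyond the short verification of the compatibility condition for $f^{-1}$.
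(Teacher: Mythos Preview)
Your proposal is correct and follows essentially the same route as the paper: both deduce the corollary directly from Proposition \ref{mor_el} by reading off the compatibility condition $3f_0+f_1(a)=b$, and both use that a bijective morphism of elliptic groups is automatically an isomorphism. Your write-up is in fact more careful than the paper's on the isomorphism clause (you explicitly argue that a bijective affine map has bijective linear part, and you even verify the compatibility condition for $f^{-1}$, which is redundant once you invoke the universal-algebra fact but certainly not wrong), whereas the paper contents itself with the single remark that if $f_1$ can be chosen to be a group isomorphism then $f=f_0+f_1$ is bijective and hence an isomorphism.
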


\begin{proof} Assume there exist a morphism $f:\,_aA\to\, _bB$. We have seen that $f=f_0+f_1$, where $f_0$ is a constant map and $f_1$ is a group homomorphism. We have also seen that $3f_0+f_1(a)=b$. Thus, $b\in 3B+\mathcal{H}(a)$. Conversely, assume $b=3f_0+f_1(a)$ for some $f_1\in \Hom(A,B)$. We put $f=f_0+f_1$. Then $f$ is a morphism $_aA\to\, _bB$, thanks to Proposition \ref{mor_el}. If one can choose $f_1$ to be an isomorphism of abelian groups, then $f=f_0+f_1$ will also be bijective and hence, an isomorphism of elliptic groups.
\end{proof}

\begin{Co}\label{3invertible}
Assume the multiplication by $3$ is a surjective endomorphism of an abelian group $A$. Then for any $a \in A$, the elliptic groups $_aA$ and $_0A$ are isomorphic. 
\end{Co}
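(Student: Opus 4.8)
The plan is to read this off from the isomorphism criterion in Corollary \ref{homiso}, applied in the special case $B=A$ and $b=0$. That criterion says that $_aA$ and $_0A$ are isomorphic precisely when $0\in 3A+\mathcal{I}(a)$, where $\mathcal{I}(a)=\{g(a) : g\in\aut(A)\}$ (for $B=A$, the isomorphisms $A\to B$ are exactly the automorphisms of $A$). So the whole task reduces to checking that $0$ lies in $3A+\mathcal{I}(a)$ under the stated hypothesis.

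The key observation is simply that ``multiplication by $3$ is a surjective endomorphism of $A$'' unwinds to $3A=A$. Since the identity map is an automorphism, $a=\id(a)\in\mathcal{I}(a)$, so $\mathcal{I}(a)$ is nonempty; combined with $3A=A$ this gives $3A+\mathcal{I}(a)=A$, which contains $0$. Hence the criterion of Corollary \ref{homiso} is met and $_aA\cong{}_0A$.

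If one prefers to exhibit the isomorphism explicitly rather than quote the criterion, one can argue directly via Proposition \ref{mor_el}: choose $f_0\in A$ with $3f_0=-a$, which exists by hypothesis, and set $f(x)=f_0+x$. Then the linear part is $f_1=\id$ and the compatibility condition reads $3f_0+f_1(a)=-a+a=0$, so $f$ is a morphism $_aA\to{}_0A$; it is visibly bijective, hence an isomorphism of elliptic groups.

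I expect essentially no obstacle here: the only ``content'' is recognising that surjectivity of multiplication by $3$ is literally the statement $3A=A$, after which Corollary \ref{homiso} (or, equivalently, Proposition \ref{mor_el}) does all the work.
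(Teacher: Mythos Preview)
Your argument is correct and matches the paper's own proof almost verbatim: both invoke Corollary~\ref{homiso} with $B=A$, $b=0$, noting that $3A=A$ and $a=\id(a)\in\mathcal{I}(a)$. Your additional explicit isomorphism via Proposition~\ref{mor_el} is a harmless elaboration of the same idea.
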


\begin{proof} This is a direct consequence of Corollary \ref{homiso}, as $3A=A$ and $a=\id(a)\in$ $\mathcal{I}(a)$.
\end{proof}

\section{Elliptic groups with a flex point}\label{sec5}

An element $o$ of an elliptic group $S$ is called a \emph{flex} point if $o\ast o=o$. For example, $0$ is a flex point of $_0A$, where $A$ is an abelian group $A$. It is clear that any homomorphism of elliptic groups sends flex points to flex points.

The category $\Ell$ has a terminal object ${\bf 0}$. As a set, it has the single element $o$, which is a flex point. For any elliptic group $S$, the unique morphism $S_!: S\to {\bf 0}$ has a section if and only if $S$ has a flex point.

\begin{Le} Let $S$ be an elliptic group and $c\in S$. Denote by $+_c$ the corresponding group structure. Then $c$ is a flex point of $S$ if and only if the inverse of any element $x\in S$ is $x\ast c$.
\end{Le}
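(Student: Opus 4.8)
The plan is to reduce everything to Proposition~\ref{12}, which already gives an explicit formula for inverses in the associated group: in $(S,+_c)$ the inverse of $x$ is $-_c x = x \ast (c\ast c)$. So the statement boils down to comparing the two elements $x\ast c$ and $x\ast(c\ast c)$.

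First I would handle the direction ``$c$ is a flex point $\Rightarrow$ the inverse of $x$ is $x\ast c$''. If $c\ast c = c$, then plugging this into the formula of Proposition~\ref{12} gives $-_c x = x\ast(c\ast c) = x\ast c$ for every $x\in S$, which is exactly the assertion.

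For the converse, assume that $x\ast c$ is the $+_c$-inverse of $x$ for all $x\in S$. Since the $+_c$-inverse is unique and equals $x\ast(c\ast c)$ by Proposition~\ref{12}, we get $x\ast c = x\ast(c\ast c)$ for all $x$, i.e. $\ell_x(c) = \ell_x(c\ast c)$. As each $\ell_x$ is a bijection (Section~\ref{sec21}), this forces $c = c\ast c$, so $c$ is a flex point. Alternatively, it suffices to specialise to $x = c$: since $c$ is the neutral element of $+_c$ it is its own inverse, hence $c = c\ast c$ at once.

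I do not expect any genuine obstacle here; the proof is essentially immediate once the right earlier facts are cited, and the only point requiring a moment's care is the cancellation step in the converse, which is covered either by the injectivity of $\ell_x$ recorded in Section~\ref{sec21} or by the shortcut $x=c$.
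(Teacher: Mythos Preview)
Your proof is correct and matches the paper's argument essentially verbatim: both directions rest on the formula $-_c x = x\ast(c\ast c)$ from Proposition~\ref{12}, and for the converse the paper cancels via the ``totally distinct'' property of the maps $\ell_a$ from Section~\ref{sec21}, which is the same content as your injectivity of $\ell_x$. Your alternative shortcut (specialising to $x=c$) is a nice extra observation not in the paper.
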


\begin{proof} The inverse of $x$ is $x\ast (c\ast c)$ by Proposition 1.2 i), which implies the \emph{if} part. Conversely, assume $x\ast (c\ast c)=x\ast c$ for all $x$. It follows that $\ell_c=\ell_{c\ast c}$ and hence, $c\ast c = c$, thanks to Section \ref{sec21}.
\end{proof}

\begin{Le}\label{62} Let $A$ be an abelian group and $c\in A$. Then $_cA$ has a flex point if and only if $c\in 3A$. If this condition holds, the set of all flex points is a torsor over the group $$Ann_3(A)=\{a\in A\ |\ 3a=0\}.$$
\end{Le}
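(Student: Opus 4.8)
The plan is to use the structure theorem results already established, in particular Lemma \ref{62}'s predecessor (the characterization of flex points via $o \ast o = o$) together with the explicit description of the operation in $_cA$ as $x \ast_c y = c - x - y$.

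First I would recall that in $_cA$, an element $o$ is a flex point precisely when $o \ast_c o = o$, i.e. $c - 2o = o$, which is equivalent to $c = 3o$. So a flex point exists if and only if $c$ lies in the image of multiplication by $3$, that is, $c \in 3A$; this proves the first assertion.

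Next, assuming $c \in 3A$, fix one flex point $o_0$ (so $c = 3o_0$). For the torsor claim I would show two things. First, $Ann_3(A)$ acts on the set $F$ of flex points by $t \cdot o := o + t$ (ordinary addition in $A$): if $o$ is a flex point then $3(o+t) = 3o + 3t = c + 0 = c$, so $o + t \in F$, and this is clearly a group action. Second, this action is simply transitive: given flex points $o$ and $o'$, we have $3o = c = 3o'$, hence $3(o' - o) = 0$, so $o' - o \in Ann_3(A)$ and $o' = (o' - o) \cdot o$; and if $o + t = o$ then $t = 0$, giving freeness. Therefore $F$ is a torsor over $Ann_3(A)$.

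I do not expect any serious obstacle here — the result is essentially a direct unwinding of the definition $x \ast_c y = c - x - y$ from Lemma \ref{ab-el}. The only mild subtlety is being careful that the relevant ``addition'' making $F$ a torsor is the original group operation of $A$, not the shifted operation $+_o$ associated to the elliptic group; but since $_cA$ has underlying abelian group (honestly) equal to $A$ up to the canonical translation isomorphism of Lemma \ref{ab-el} iii), this causes no trouble. One could alternatively phrase the torsor statement intrinsically for an arbitrary elliptic group $S$ with flex point, but reducing to $_cA$ via Theorem \ref{2.3} makes the computation transparent.
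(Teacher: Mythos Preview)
Your proof is correct and follows essentially the same approach as the paper: both arguments unwind the equation $o\ast_c o = o$ to $c=3o$, then verify directly that adding an element of ${\sf Ann}_3(A)$ preserves the flex condition and that the difference of two flex points lies in ${\sf Ann}_3(A)$. Your version is slightly more explicit about the torsor language (spelling out freeness and the action axioms), but the mathematical content is identical.
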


\begin{proof} Let $o$ be a flex point of $_cA$. We have
$$o=o\ast o=c-o-o,$$
hence $c=3o\in 3A$. If $t\in Ann_3(A)$, then $3t=0$. Hence
$$(o+t)\ast (o+t)=c-(o+t)-(o+t)=c-2o-2t=3o+3t-2o-2t=o+t.$$
This shows that $o+t$ is flex. Conversely, assume $o'$ is also a flex point. Denote $t'=o'-o\in A$. We have $o'=o+t'$. Thus, $c=3o'=3o+3t'=c+3t'$ and $3t'=0$.
\end{proof}

\begin{Le} i) An elliptic group $S$ is isomorphic to an elliptic group of the form $_0A$ if and only if $S$ has a flex point.

ii) Denote by $\Ell^{Flex}$ the category of elliptic groups with chosen flex points and morphisms preserving those points. The functor
$${\bf Ab\to Ell_{Gr}^{Flex}},\ \ \ A\mapsto \, _0A,$$
is an equivalence of categories, where ${\bf Ab}$ is, as usual, the category of abelian groups.
\end{Le}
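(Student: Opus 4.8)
The plan is to prove both parts by assembling results already available. For part i), the ``if'' direction is the substantive one. Suppose $S$ has a flex point $o$, so $o\ast o=o$. Form the associated abelian group $A=(S,+_o)$ as in Proposition \ref{12}. By Theorem \ref{2.3}, $S$ coincides with $_cA$ where $c=o\ast o$; but $o$ is flex, so $c=o$, and $o$ is the zero of $A$. Hence $S= {}_0A$, which in particular gives an isomorphism $S\cong {}_0A$. For the ``only if'' direction, $0$ is a flex point of $_0A$ (as noted just before Lemma \ref{62}, or directly: $0\ast_0 0 = 0-0-0=0$), and since isomorphisms of elliptic groups carry flex points to flex points, any $S\cong {}_0A$ has a flex point.

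For part ii), I would exhibit the functor and its pseudo-inverse. The assignment $A\mapsto {}_0A$ is well-defined on objects with the chosen flex point $0$, and on morphisms it is the identity on underlying maps: by Lemma \ref{ab-el} ii), a group homomorphism $f:A\to B$ is a morphism of elliptic groups $_0A\to {}_0B$, and since $f(0)=0$ it preserves the chosen flex points; functoriality is then immediate. In the reverse direction, send an object $(S,o)\in\Ell^{Flex}$ to the abelian group $(S,+_o)$, and send a morphism $g:(S,o)\to(T,o')$ (so $g(o)=o'$) to $g$ viewed as a group homomorphism $(S,+_o)\to(T,+_{o'})$, which is legitimate by the Lemma at the end of Section \ref{sec2}. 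To see these functors are mutually pseudo-inverse: starting from an abelian group $A$, applying $A\mapsto {}_0A$ and then $(S,o)\mapsto(S,+_o)$ returns $(A,+_0)$, and by Lemma \ref{ab-el} iii) with $c=0$ this group \emph{equals} $A$, so that composite is the identity functor. Starting from $(S,o)\in\Ell^{Flex}$, forming $A=(S,+_o)$ and then $_0A$ returns $S$ on the nose by part i) (since $c=o\ast o=o=0_A$), with chosen flex point $0_A=o$; so that composite is also the identity. Thus the functor is an isomorphism of categories, a fortiori an equivalence.

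The one point requiring a little care — the ``main obstacle'', though it is minor — is checking that the two constructions are inverse \emph{as functors} and not merely on objects: one must confirm that a morphism is sent to the same underlying map under both round-trips, which is clear here because in both directions morphisms are passed through unchanged on underlying sets, and the flex-point-preservation condition $g(o)=o'$ is exactly what makes $g$ land in the right hom-set on each side. Because both composites are literally the identity functor (using the ``coincides'' statements of Theorem \ref{2.3} and Lemma \ref{ab-el} iii) rather than mere isomorphisms), no natural transformations need to be constructed, and the proof is complete.
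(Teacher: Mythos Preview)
Your proof is correct. For part i) you argue directly from Theorem \ref{2.3}: if $o$ is flex then $c=o\ast o=o$ is the zero of $(S,+_o)$, so $S={}_0A$ on the nose. This is precisely the computation the paper itself carries out when proving essential surjectivity in part ii), so the substance is the same (the paper's own one-line proof of i) points to Corollary \ref{3invertible}, which is at best an indirect reference).

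For part ii) the organizational choice differs. The paper shows the functor $A\mapsto{}_0A$ is essentially surjective, faithful, and full --- the last step via Proposition \ref{mor_el}, observing that a morphism sending the chosen flex $0$ to the chosen flex $0$ has constant part $f_0=0$ and hence coincides with its linear part $f_1$. You instead build the inverse functor $(S,o)\mapsto(S,+_o)$ explicitly and verify that both round-trips are literally the identity functor, using Lemma \ref{ab-el} iii) and Theorem \ref{2.3}. Your route yields the marginally stronger conclusion that the functor is an \emph{isomorphism} of categories rather than merely an equivalence; the paper's fully-faithful argument is slightly more economical since it avoids checking functoriality of the inverse. The underlying content is identical.
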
 

\begin{proof} i) This follows from Corollary \ref{3invertible}.

ii) Let $S$ be an elliptic group with a fixed flex point $o$. By Theorem 2.3, $_0A$ and $S$ are the same elliptic groups, where $A=(S,+_o)$ is the group corresponding to $S$. Hence, the above functor is essentially surjective. It is obviously faithful. It only remains to show that it is full. Let $f:S\to T$ be a morphism of elliptic groups. By assumption, we have a chosen flex $o\in S$ and $f(o)$ is the chosen flex in $T$. As we already said, $S=\,_0A, T\, =_0B$, where $A=(S,+_o)$ and $B=(T,+_{f(o)})$. The map $f$, considered as a map $A\to B$, sends $0$ to $0$. As such, the constant part of $f$ vanishes and thus, it is a homomorphism of abelian groups, proving the result.
\end{proof}

\begin{Le} For any elliptic group $S$, there is an injective morphism of elliptic groups $\kappa: S\to T$, where $T$ has a flex point.
\end{Le}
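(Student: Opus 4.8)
The plan is to realize $S$ inside some elliptic group built from an abelian group, and then use Lemma \ref{62} to detect when that group actually has a flex point. By Theorem \ref{2.3}, after fixing any $o\in S$ we may assume $S={}_cA$, where $A=(S,+_o)$ and $c=o\ast o$. The obstruction to $S$ having a flex point is, by Lemma \ref{62}, precisely that $c$ may fail to lie in $3A$. So the task reduces to embedding $A$ into a larger abelian group $B$, together with an element $b\in B$, in such a way that the original element $c$ becomes divisible by $3$ in $B$ and the inclusion $A\hookrightarrow B$ is a morphism of elliptic groups ${}_cA\to{}_bB$. By Proposition \ref{mor_el} (or Lemma \ref{ab-el} ii), the inclusion $\iota:A\to B$ is such a morphism as soon as $b=\iota(c)$, i.e.\ we just need $c$ itself to be $3$-divisible in $B$; then set $T={}_cB$, and by Lemma \ref{62} $T$ has a flex point because $c\in 3B$.

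The first step is therefore the following purely abelian-group construction: given $(A,c)$, produce an embedding $A\hookrightarrow B$ of abelian groups with $c\in 3B$. The cleanest way is to form the pushout (amalgamated sum)
$$B:=(A\oplus\Z)/\langle (c,0)-(0,3)\rangle,$$
with $\iota:A\to B$ the composite $A\to A\oplus\Z\to B$. In $B$ one has $\iota(c)=3\cdot[(0,1)]$, so $c\in 3B$. I would then check that $\iota$ is injective: an element $(a,0)$ maps to $0$ in $B$ iff $(a,0)=n\big((c,0)-(0,3)\big)$ for some $n\in\Z$, which forces $n=0$ and hence $a=0$; so $\iota$ is indeed a monomorphism of abelian groups. (Equivalently, one may adjoin a formal cube root of $c$: take $B=A\times_{\Z}\Z$ along $c\mapsto 3$, or even more simply work inside $A\otimes_{\Z}\Z[\tfrac13]$ when $A$ is torsion-free, but the pushout works in general.)

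The second step assembles these pieces. Set $T={}_cB$ and let $\kappa:S\to T$ be $\iota$ viewed as a map of the underlying sets. Since $S={}_cA$ and $\iota:(A,c)\to(B,c)$ fixes the distinguished element $c$, Lemma \ref{ab-el} ii) gives that $\kappa$ is a morphism of elliptic groups ${}_cA\to{}_cB$; injectivity of $\kappa$ is injectivity of $\iota$. Finally $c\in 3B$, so Lemma \ref{62} guarantees that $T$ has a flex point, completing the argument.

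The only genuinely delicate point is the verification that the amalgamation $A\to B$ stays injective; everything else is a direct appeal to results already proven in the excerpt (Theorem \ref{2.3} to normalize $S$, Lemma \ref{ab-el} ii) for functoriality of $_{(-)}(-)$, and Lemma \ref{62} for the flex criterion). If one prefers to sidestep the pushout computation entirely, an alternative is to take $B=A\oplus\Z$ and $b=(c,0)$ directly — but that $b$ need not be in $3B$, so one does have to pass to a quotient or a localization, and checking injectivity there is the crux. I expect this to be a short lemma once the right $B$ is written down.
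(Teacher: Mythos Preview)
Your argument is correct, and the overall shape matches the paper's: reduce to $S={}_cA$ via Theorem~\ref{2.3}, enlarge $A$ to some $B$ in which $c$ becomes $3$-divisible, and invoke Lemma~\ref{62} to get a flex in $T={}_cB$. The difference is in how you produce $B$. The paper simply cites the standard fact that every abelian group embeds in a divisible one (and remarks that $3$-divisible suffices); once $B$ is $3$-divisible, $c\in 3B$ is automatic and no further computation is needed. You instead build $B$ by hand as the pushout $(A\oplus\Z)/\langle(c,0)-(0,3)\rangle$ and check injectivity directly. Your route is more self-contained and constructive---it avoids appealing to injective envelopes, and as a bonus your $B$ is finitely generated whenever $A$ is, which foreshadows the strengthening proved later for finitely generated $S$. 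The paper's route is shorter if one is willing to quote the embedding theorem as a black box. Your injectivity check and the verification that $\iota(c)=3\cdot[(0,1)]$ are both fine; the parenthetical remark about $A\otimes_{\Z}\Z[\tfrac13]$ in the torsion-free case is also correct but unnecessary given that the pushout already works in general.
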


\begin{proof} We can assume $S=\, _cA$ for an abelian group $A$. We can embed $A$ into a divisible group (in fact, 3-divisible is enough) $B$. Then $_cB$ has a flex, thanks to Lemma \ref{62}, so we can take $T=\, _cA$.
\end{proof}

The above lemma can be compared with the well-known fact that any elliptic curve over an algebraically closed field has a flex point, see \cite[Ch.2. Proposition 4.8]{dale}. However, over non-algebraically closed fields, elliptic curves might not have a flex point and thus, their corresponding elliptic groups will be not of the form $_0A$. For example, the flex points of the elliptic
curve
$$x^3+2y^3-3z^3=0$$
are
$$(0, \sqrt[3]{3}, \sqrt[3]{2}), (\sqrt[3]{3},0,1) \ \ \textnormal{ and }\ \ (\sqrt[3]{2},-1,0).$$
Hence, this elliptic curve has no rational flex, see also Example \ref{exa}.

\begin{Ex}\label{85} Let us once again consider the elliptic curve from Example \ref{exa}. Since $\infty$ is a flex point of the elliptic group $\infty\in\Ec_2(\mathbb{F}_7)$, it follows that, as an elliptic group, it is isomorphic to $_0(\Z/3\Z)^2$.

If, on the other hand, we choose $\mathcal{O}=(1,1)\in \Ec_1(\mathbb{F}_7)$ as the zero element, the points $(1,2)$, $(2,4)$ become the generators for the group 
$$(\Ec_1(\mathbb{F}_7),+_{\mathcal{O}})\cong (\Z/3\Z)^2.$$
Since $\mathcal{O} \ast \mathcal{O}=(2,4)$, it follows from Theorem \ref{2.3}, that one has an isomorphisms of elliptic groups
$$\Ec_1(\mathbb{F}_7)\cong \, _{(1,0)}(\Z/3\Z)^2.$$
\end{Ex}

\section{$\mor(S,T)$ as an elliptic group}\label{sec6}

\begin{Le} Let $S$ and $T$ be elliptic groups. Assume $f,g:S\to T$ are homomorphisms of elliptic groups. Define $f\ast g:S\to T$ to be the map given by
$$(f\ast g)(s)=f(s)\ast g(s).$$
Then $f\ast g$ is also a homomorphism of elliptic groups.
\end{Le}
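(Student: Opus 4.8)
The plan is to verify directly that $f \ast g$ satisfies the defining condition for a morphism of elliptic groups, namely $(f\ast g)(x \ast y) = (f\ast g)(x) \ast (f\ast g)(y)$ for all $x,y \in S$. Unwinding the definition, the left-hand side is $f(x\ast y) \ast g(x\ast y)$, and since $f$ and $g$ are themselves morphisms this equals $\bigl(f(x)\ast f(y)\bigr) \ast \bigl(g(x) \ast g(y)\bigr)$. The right-hand side is $\bigl(f(x)\ast g(x)\bigr) \ast \bigl(f(y)\ast g(y)\bigr)$. So the entire statement reduces to the purely algebraic identity in $T$:
$$
(p\ast q)\ast(r\ast s) = (p\ast r)\ast(q\ast s),
$$
where I set $p = f(x)$, $q = f(y)$, $r = g(x)$, $s = g(y)$. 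This is a ``medial'' or ``entropic'' law, and it is the one genuine thing to prove; everything else is bookkeeping.

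To establish the medial identity I would work entirely from axioms EG1--EG3. The cleanest route is to pass through the associated abelian group: fix any element $o\in T$ (assuming $T$ nonempty; if $T$ is empty there is nothing to prove, or $S$ is empty and the statement is trivial) and use Theorem \ref{2.3}, which tells us $T = {}_cA$ for $c = o\ast o$ and $A = (T,+_o)$, so that $u\ast v = c - u - v$ for all $u,v\in T$. Then both sides of the medial identity compute to $c - p - q - r - s$ in the abelian group $A$ — indeed $(p\ast q)\ast(r\ast s) = c - (c-p-q) - (c-r-s) = -c + p+q+r+s$, wait, let me recompute: it is $c - (p\ast q) - (r\ast s) = c - (c-p-q) - (c-r-s) = p+q+r+s - c$, and symmetrically $(p\ast r)\ast(q\ast s) = p+r+q+s - c$, which is the same. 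This makes the identity transparent. Alternatively, one can prove the medial law directly from EG1 and EG3 without invoking Theorem \ref{2.3}: writing the identity with the $\ell$-notation of Section \ref{sec21}, $\ell_{\ell_p(q)}(\ell_r(s))$ versus $\ell_{\ell_p(r)}(\ell_q(s))$, and manipulating via EG3 (which already encodes a four-variable rearrangement) together with EG1; but the abelian-group reduction is shorter and less error-prone.

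The only mild subtlety — and the ``main obstacle'' such as it is — is simply making sure the bracketing in the medial identity is matched correctly against the morphism conditions for $f$ and $g$ and against EG1 (commutativity is used to swap $q$ and $r$ in the middle). Since $\ast$ is commutative but not associative, one must be careful that $(f(x)\ast f(y))\ast(g(x)\ast g(y))$ is exactly the grouping that arises, and that the target grouping $(f(x)\ast g(x))\ast(f(y)\ast g(y))$ is what we need. Once the reduction to the medial law is set up correctly, the proof is two or three lines. I would present it by stating the medial identity as the key point, proving it via the $u\ast v = c-u-v$ description of $T$, and then assembling the two computations of $(f\ast g)(x\ast y)$.
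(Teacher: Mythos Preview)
Your proof is correct, but it takes a different route from the paper's. You reduce the statement to the medial (entropic) identity $(p\ast q)\ast(r\ast s) = (p\ast r)\ast(q\ast s)$ in $T$ and then verify that identity via the description $u\ast v = c - u - v$ coming from Theorem~\ref{2.3}. The paper instead invokes Proposition~\ref{mor_el}: it writes $S={}_aA$, $T={}_bB$, decomposes $f=f_0+f_1$ and $g=g_0+g_1$ into their constant and linear parts, computes $h=f\ast g$ explicitly as the affine map with $h_0=b-f_0-g_0$ and $h_1=-(f_1+g_1)$, and then checks the compatibility condition $3h_0+h_1(a)=b$. Your argument is shorter and more conceptual --- it isolates the one structural fact about $T$ that is actually needed, and never touches the affine description of morphisms or the domain $S$ at all. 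The paper's argument, on the other hand, yields as a by-product the explicit affine form of $f\ast g$, which is useful later when one wants to compute $\mor(S,T)$ concretely (as in Lemmas~\ref{mor0b} and~\ref{mor1b}).
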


\begin{proof}
We can and we will assume that $S=\, _aA$ and $T=\, _bB$, for abelian groups $A$ and $B$, with $a\in A$, $b\in B$. To check that $h=f\ast g$ is a morphism of elliptic groups, it is enough to show that
$f\ast g$ is an affine map from the group $A$ to $B$, satisfying the compatibility condition:
$$b=3h_0+h_1(a).$$

To show this assertion, write $f(s)=f_0+f_1(s)$ and $g(s)=g_0+g_1(s)$, where $f_0,g_0\in B$ and $f_1,g_1\in \Hom(A,B)$. It follows that
$$h(s)=f(s)\ast g(s)=b-f_0-f_1(s)-g_0-g_1(s).$$
Thus, $h$ is affine with
$$h_0=b-f_0-g_0$$
and
$$h_1(s)=-(f_1+g_1).$$
Now, we can compute
$$3h_0+h_1(a)=3b-3f_0-3g_0-f_1(a)-g_1(a)=3b-(b-f_1(a))-(b-g_1(a))-f_1(a)-g_1(a)=b$$
and the Lemma follows. 
\end{proof}

\begin{Co} For any elliptic groups $S,T$, the set $\mor(S,T)$ of all homomorphisms of elliptic groups $S\to T$ is again an elliptic group.
\end{Co}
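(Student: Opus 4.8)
The plan is to deduce the corollary directly from the preceding lemma by verifying the three axioms EG1, EG2, EG3 for the operation $\ast$ on $\mor(S,T)$ defined pointwise. The essential content — that $f\ast g$ is again a morphism — has been done in the lemma, so the remaining work is purely formal: I just need to check that the axioms of an elliptic group are inherited from $T$ by evaluating everything at a point $s\in S$.

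First I would note that for $f,g\in \mor(S,T)$, the lemma guarantees $f\ast g\in\mor(S,T)$, so $\ast$ is a well-defined binary operation on $\mor(S,T)$. Then, for EG1, since $(f\ast g)(s)=f(s)\ast g(s)=g(s)\ast f(s)=(g\ast f)(s)$ for every $s$ by EG1 in $T$, we get $f\ast g=g\ast f$. For EG2, $(f\ast(f\ast g))(s)=f(s)\ast\bigl(f(s)\ast g(s)\bigr)=g(s)$ by EG2 in $T$, so $f\ast(f\ast g)=g$. For EG3, given $f,g,h,k\in\mor(S,T)$, evaluating both $f\ast(g\ast(h\ast k))$ and $k\ast(g\ast(h\ast f))$ at $s$ gives $f(s)\ast\bigl(g(s)\ast(h(s)\ast k(s))\bigr)$ and $k(s)\ast\bigl(g(s)\ast(h(s)\ast f(s))\bigr)$, which agree by EG3 in $T$. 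Since two morphisms agreeing at every point are equal, EG3 holds in $\mor(S,T)$.

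There is no real obstacle here; the only point worth a word is that equality of maps is tested pointwise, so each identity in $\mor(S,T)$ reduces to the corresponding identity in $T$. Alternatively, one can phrase this more slickly: $\mor(S,T)$ is a subset of $T^S$, and $T^S$ is an elliptic group because products (here an $S$-indexed product) of elliptic groups are elliptic groups, as noted in Subsection on general remarks; the lemma then says precisely that $\mor(S,T)$ is closed under the operation, hence an elliptic subgroup of $T^S$. I would present the short pointwise verification, since it is self-contained and immediate.

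\begin{proof} By the previous lemma, the pointwise operation $(f\ast g)(s):=f(s)\ast g(s)$ sends a pair of morphisms $S\to T$ to a morphism $S\to T$, so $\ast$ is a binary operation on $\mor(S,T)$. It remains to check EG1--EG3. Since two maps $S\to T$ coincide as soon as they take the same value at every $s\in S$, each axiom for $\mor(S,T)$ follows by evaluating at an arbitrary $s$ and invoking the same axiom in $T$. Concretely, for all $s\in S$ and all $f,g,h,k\in\mor(S,T)$ we have $f(s)\ast g(s)=g(s)\ast f(s)$, $f(s)\ast\bigl(f(s)\ast g(s)\bigr)=g(s)$, and $f(s)\ast\bigl(g(s)\ast(h(s)\ast k(s))\bigr)=k(s)\ast\bigl(g(s)\ast(h(s)\ast f(s))\bigr)$, which give EG1, EG2 and EG3 respectively for $\mor(S,T)$. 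Hence $\mor(S,T)$ is an elliptic group. \end{proof}
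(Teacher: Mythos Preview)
Your proof is correct and is exactly the argument the paper intends: the preceding lemma does the only nontrivial work (closure of $\mor(S,T)$ under $\ast$), and the paper states the corollary without further proof since EG1--EG3 are inherited pointwise from $T$, just as you verify. Your alternative phrasing via the elliptic subgroup of the product $T^S$ is also in line with the paper's earlier remark that limits in $\Ell$ are computed componentwise.
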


We will henceforth always equip the set $\mor(S,T)$ with this structure.

\subsection{Examples}

\begin{Le}\label{mor0b} Let $A$ and $B$ be abelian groups and $b\in B$. One has an isomorphism of elliptic groups
$$\mor(_0A,\,_bB)\cong
\begin{cases}\emptyset, \ \ {\rm if} \ b\not \in 3B\\
						  _0\Hom(\Z/3\Z\oplus A,B), \ \ {\rm if} \ b\in 3B.\end{cases}$$
The second isomorphism is natural with respect to $A$.
\end{Le}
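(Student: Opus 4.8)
The plan is to reduce everything to Proposition~\ref{mor_el} and Corollary~\ref{homiso}. First, the case $b\notin 3B$ is immediate: by Corollary~\ref{homiso} applied with $a=0$, we have $\mor(_0A,{}_bB)\neq\emptyset$ iff $b\in 3B+\mathcal H(0)=3B$ (since $\mathcal H(0)=\{0\}$ because every group homomorphism kills $0$). So assume now $b\in 3B$, and fix once and for all an element $\beta\in B$ with $3\beta=b$.

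For the main case I would produce an explicit bijection and then check it is a morphism of elliptic groups. By Proposition~\ref{mor_el}, a morphism $f:{}_0A\to{}_bB$ is exactly a pair $(f_0,f_1)$ with $f_1\in\Hom(A,B)$, $f_0\in B$, and $3f_0+f_1(0)=b$, i.e.\ $3f_0=b$. So $\mor(_0A,{}_bB)$ is the set of pairs $(f_0,f_1)$ with $f_1\in\Hom(A,B)$ and $3f_0=b$. Writing $f_0=\beta+t$, the condition $3f_0=b$ becomes $3t=0$, i.e.\ $t\in Ann_3(B)=\Hom(\Z/3\Z,B)$. Hence as a set $\mor(_0A,{}_bB)$ is in bijection with $\Hom(\Z/3\Z,B)\times\Hom(A,B)=\Hom(\Z/3\Z\oplus A,B)$, via $(f_0,f_1)\mapsto(f_0-\beta,\ f_1)\mapsto$ the homomorphism $\Z/3\Z\oplus A\to B$ sending $(1,0)\mapsto f_0-\beta$ and $(0,x)\mapsto f_1(x)$.

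It remains to check that this bijection intertwines the elliptic group operation on $\mor(_0A,{}_bB)$ (described in the Lemma preceding this statement: $(f\ast g)(s)=f(s)\ast_b g(s)$, which on constant/linear parts reads $(f\ast g)_0=b-f_0-g_0$ and $(f\ast g)_1=-(f_1+g_1)$) with the operation $\ast_0$ on $\Hom(\Z/3\Z\oplus A,B)=\,{}_0\Hom(\Z/3\Z\oplus A,B)$, namely $\varphi\ast_0\psi=-\varphi-\psi$. Concretely, if $f\leftrightarrow\varphi$ and $g\leftrightarrow\psi$, so $f_0=\beta+\varphi(1,0)$, $g_0=\beta+\psi(1,0)$, $f_1=\varphi(0,-)$, $g_1=\psi(0,-)$, then $(f\ast g)_0=b-f_0-g_0=3\beta-2\beta-\varphi(1,0)-\psi(1,0)=\beta-(\varphi+\psi)(1,0)$, which corresponds to the constant datum $(f\ast g)_0-\beta=-(\varphi+\psi)(1,0)$, and $(f\ast g)_1=-(f_1+g_1)=-(\varphi+\psi)(0,-)$; together these say $f\ast g\leftrightarrow -(\varphi+\psi)=\varphi\ast_0\psi$. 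So the bijection is an isomorphism of elliptic groups. Finally, naturality in $A$: a homomorphism $\alpha:A\to A'$ induces precomposition $\mor(_0A',{}_bB)\to\mor(_0A,{}_bB)$ and $(\id\oplus\alpha)^*:\Hom(\Z/3\Z\oplus A',B)\to\Hom(\Z/3\Z\oplus A,B)$, and one checks these agree under the bijection since precomposition affects only the linear part $f_1\mapsto f_1\circ\alpha$ and leaves the constant part $f_0$ untouched.

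I expect the only mild subtlety to be bookkeeping: the choice of $\beta$ is non-canonical, so strictly speaking the set-level isomorphism depends on it, but any two choices differ by an element of $Ann_3(B)$, hence by a translation that is absorbed into the $\Hom(\Z/3\Z,B)$ factor; this does not affect the statement, which only asserts existence of an isomorphism. The naturality clause is clean precisely because it is naturality in $A$ only (not in $B$), so $\beta$ never needs to vary. No single step is a real obstacle — everything is a direct consequence of the affine description of morphisms already established.
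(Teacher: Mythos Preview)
Your proof is correct and follows essentially the same approach as the paper's: both dispose of the empty case via Corollary~\ref{homiso}, then fix a preimage $\beta$ (the paper calls it $f_0$) of $b$ under multiplication by $3$, and set up the bijection $(f_0,f_1)\leftrightarrow(f_0-\beta,f_1)\in\mathrm{Ann}_3(B)\times\Hom(A,B)$, verifying directly that it respects the elliptic operation. Your treatment is in fact slightly more thorough, since you explicitly verify naturality in $A$ and comment on the non-canonicity of $\beta$, points the paper leaves implicit.
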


\begin{proof} The first part follows directly from Corollary \ref{homiso}. Assume $b=3f_0$ for some $f_0\in B$. We observe that
$$\Hom(\Z/3\Z\oplus A,B)\cong \Hom(\Z/3\Z,B)\oplus \Hom(A,B)\cong {\sf Ann}_3(B)\oplus \Hom(A,B),$$
where ${\sf Ann}_3(B)=\{t\in B| 3t=0\}.$ Define the map
$$\phi: {\sf Ann}_3(B)\oplus \Hom(A,B)\to \maps(A,B)$$
by
$$\phi(t,h)(a):=f_0+t+h(a),$$
where $t\in B$ is element, such that $3t=0$ and $h\in\Hom(A,B)$. Obviously, $\phi(t,h):A\to B$ is an affine map with constant term $f_0+t$. Since
$$3(f_0+t)+h(0)=3f_0=b,$$
the compatibility condition \ref{3el} holds and thus, $\phi(t,h)\in\mor(_0A,\, _bB)$. Hence, $\phi$ can be considered as a map
$$\phi: {\sf Ann}_3(B)\oplus \Hom(A,B)\to \mor(_0A,\, _bB).$$
We have
\begin{align*}
\phi(-t-t',-h-h')(a)&=f_0-t-t'-h(a)-h'(a)\\&=
b-(f_0+t+h(a))-(f_0+t'+h'(a))\\
&= \phi(t,h)(a)\ast \phi(t',h')(a).
\end{align*}
Thus, $\phi:\,_0\Hom(\Z/3\Z\oplus A,B)\to \mor(_0A, \, _bB)$ is a morphism of elliptic groups. It suffices to show that it is a bijection. Take any morphism $g:\, _0A\to\, _bB$. By Proposition \ref{mor_el}, it has a form
$g=g_0+g_1$, where $3g_0=b$ and $g_1:A\to B$. By putting $h=g$ and $t=f_0-g_0$, we see that $g\mapsto (t,h)$ is the inverse of the map $\phi$.
 \end{proof}
 
\begin{Le}\label{mor1b} Let $B$ be an abelian group and $b\in B$.

i) One has an isomorphism of elliptic groups
 $$\mor(_1\Z,\,_bB)\cong \, _bB.$$

ii) Let $n=3^k$. There exists an isomorphism of elliptic groups
$$\mor(_1(\Z/n\Z),\,_bB)\cong \begin{cases}\emptyset, \ \ {\rm if} \ 3^kb\not \in 3^{k+1}B\\
						 _bB, \ \ {\rm if} \ 3^kb\in 3^{k+1}B. \end{cases}	$$
\end{Le}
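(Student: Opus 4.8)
The plan is to reduce everything to the description of morphisms out of a cyclic elliptic group via Proposition \ref{mor_el}, exactly as in Lemma \ref{mor0b}. For part i), a morphism $f:{}_1\Z\to{}_bB$ is an affine map $f(x)=f_0+f_1(x)$ with $f_1\in\Hom(\Z,B)$ and compatibility $3f_0+f_1(1)=b$. Since a homomorphism $\Z\to B$ is determined by the image of $1$, I would send $f$ to the element $f(1)=f_0+f_1(1)\in B$. The compatibility forces $f_1(1)=b-3f_0$, so $f_0$ can be chosen freely and then $f_1$ is determined; conversely any choice of $f(1)$ is realised (take $f_0=0$, $f_1(1)=b$; more honestly, the value $f(1)$ can be anything since $f_0$ is free). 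One checks this map $f\mapsto f(1)$ is a bijection $\mor(_1\Z,{}_bB)\to B$, and then that it intertwines the operations: on the left $(f\ast g)(1)=f(1)\ast g(1)=b-f(1)-g(1)$, which is exactly $\ast_b$ on $B$; hence $\mor(_1\Z,{}_bB)\cong{}_bB$ as elliptic groups.

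For part ii), the same strategy applies with $\Z$ replaced by $\Z/n\Z$, $n=3^k$. A morphism $f:{}_1(\Z/n\Z)\to{}_bB$ is $f(x)=f_0+f_1(x)$ with $f_1\in\Hom(\Z/n\Z,B)\cong{\sf Ann}_n(B)$ and compatibility $3f_0+f_1(1)=b$. Writing $t:=f_1(1)$, we need $t\in{\sf Ann}_n(B)$ (i.e. $nt=0$, equivalently $3^kt=0$) and $t=b-3f_0$ for some $f_0\in B$; the latter says $b-t\in 3B$. So $\mor(_1(\Z/n\Z),{}_bB)$ is nonempty iff there exists $t\in B$ with $3^kt=0$ and $b-t\in 3B$. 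I claim this condition is equivalent to $3^kb\in 3^{k+1}B$: if such $t$ exists, then $b=t+3b'$ for some $b'$, so $3^kb=3^kt+3^{k+1}b'=3^{k+1}b'\in 3^{k+1}B$; conversely if $3^kb=3^{k+1}c$, set $t:=b-3\cdot3^{k-1}(3c)$... more cleanly, $3^k(b-3c)=0$, and $b-3c\equiv b\pmod{3B}$, so take $t=b-3c$. Hence the nonemptiness dichotomy. When nonempty, I would show $\mor(_1(\Z/n\Z),{}_bB)\cong{}_bB$ by the same map $f\mapsto f(1)$: given $f(1)=f_0+t$ with the constraints, and conversely, I need that the assignment is a bijection onto $B$ and intertwines $\ast$. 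Surjectivity onto $B$ is the subtle point — given an arbitrary $\beta\in B$, I need $f_0,t$ with $f_0+t=\beta$, $3^kt=0$, $3f_0+t=b$; subtracting, $2f_0=\beta-b+ (t-t)$... so $f_0$ must satisfy $3f_0=b-t$ and $f_0=\beta-t$, forcing $3\beta-3t=b-t$, i.e. $2t=3\beta-b$; one then checks, using $3^kb\in3^{k+1}B$ and multiplication-by-$2$ being invertible mod powers of $3$, that a valid $t$ (hence $f_0$) exists and is unique.

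The main obstacle is precisely this last surjectivity/bijectivity claim in part ii): unlike part i), here $f_1(1)$ is not free but constrained to ${\sf Ann}_n(B)$, so one cannot simply read off $f(1)$ and separately choose $f_0$. The resolution will rest on the fact that $2$ is a unit in $\Z/3^j\Z$ for all $j$, so that the system $3f_0+t=b$, $f_0+t=\beta$ with the annihilation constraint $3^kt=0$ has, under the hypothesis $3^kb\in3^{k+1}B$, exactly one solution $(f_0,t)$ for each $\beta$ — this is where the condition $n=3^k$ (rather than a general $n$) is genuinely used. Once the bijection is established, the compatibility with $\ast$ is the same routine computation as in part i) and in Lemma \ref{mor0b}: $(f\ast g)(1)=f(1)\ast g(1)$ and the right-hand side is $\ast_b$. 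I would also remark that both isomorphisms can be seen more conceptually: $\mor(_1\Z,{}_bB)$ represents the "affine sections" and the isomorphism is the evaluation-at-generator map, which is the elliptic-group analogue of $\Hom(\Z,-)\cong\mathrm{id}$.
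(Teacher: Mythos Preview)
There is a genuine gap in part i). Your evaluation map $f\mapsto f(1)$ is indeed a morphism of elliptic groups into ${}_bB$ (evaluation at any point is, by the pointwise structure on $\mor$), but it is \emph{not} a bijection in general. From $f(x)=f_0+xc$ with $c=b-3f_0$ one gets $f(1)=b-2f_0$; as $f_0$ ranges freely over $B$, the image is only the coset $b+2B$, and the fibres are cosets of the $2$-torsion of $B$. Concretely, for $B=\Z/2\Z$ there are two morphisms ${}_1\Z\to{}_bB$ (one for each $f_0\in\{0,1\}$), but both satisfy $f(1)=b$, so your map is neither injective nor surjective. The sentence ``the value $f(1)$ can be anything since $f_0$ is free'' is precisely where the argument breaks: freeness of $f_0$ does not make $b-2f_0$ sweep out all of $B$.

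The paper instead uses $f\mapsto f(0)=f_0$. This is equally a morphism of elliptic groups, and it \emph{is} a bijection, because $f_0$ is the free parameter and $c=b-3f_0$ is then determined. For part ii) the paper proceeds the same way: with $c=b-3f_0$ the extra constraint $nc=0$ becomes $3^{k+1}f_0=3^kb$, which is exactly your nonemptiness criterion $3^kb\in 3^{k+1}B$ (that portion of your argument is correct), and the identification is again via $f\mapsto f_0$. Your attempt to push through with $f\mapsto f(1)$ in part ii), solving the system $3f_0+t=b$, $f_0+t=\beta$ and appealing to ``$2$ is a unit in $\Z/3^j\Z$'', does not close: $B$ is an arbitrary abelian group, so invertibility of $2$ on $B$ is simply unavailable. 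Switching the evaluation point from $1$ to $0$ removes the difficulty entirely in both parts.
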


\begin{proof} i) Any affine map $f:\Z\to B$ has the form $f(x)=f_0+xc$, where $f_0,c\in B$. The compatibility condition (\ref{3el}) reads
$$3f_0+c=b,$$
which shows that $f_0$ can be taken arbitrary and then $c=b-3f_0$ is uniquely determined. Thus, $f\mapsto f_0$ is an isomorphism.

ii) Any affine map $f:\Z/n\to B$ has the form $f(x)=f_0+xc$, where $f_0,c\in B$ and $nc=0$. Since $c=b-3f_0$, as argued above, the condition $nc=0$ implies $3^kb=3^{k+1}f_0$, and the result follows.
\end{proof}

\section{Congruences of elliptic groups}\label{sec7}

We consider congruences in elliptic groups next. According to the theory of universal algebras, an equivalence relation $\sim$ on an elliptic group $S$ is a \emph{congruence} if and only if $x\sim y$ and $u\sim v$ implies $x\ast u\sim y\ast v$. In this case, the quotient set $S/\sim$ has a unique elliptic group structure for which the canonical map $S\to S/\sim$ is a homomorphism. 

\begin{Le}\label{cong_sub} i) Let $S$ be a nonempty elliptic group and $\sim$ be a congruence on $S$. For any $c\in S$, the set
$$K_c=\{x\in S| x\sim c\}$$
is a subgroup of the group $(S,+_c)$.

ii) Conversely, if $K$ is a subgroup of $(S,+_c)$, then 
$$(x\sim y) \Longleftrightarrow (x+_c(-_c y)\in K)$$
is a congruence on $S$, where $-_cy=y\ast (c\ast c)$ is the inverse of $y$ in the group $(S,+_c)$.

\end{Le}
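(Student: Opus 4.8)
The plan is to reduce everything to the model description $S = {}_aA$ provided by Theorem \ref{2.3}, and then to transport the classical correspondence between subgroups and congruences for the abelian group $A$ through the formula $x \ast_a y = a - x - y$. For part (i), I would fix $c \in S$ and first observe that, by Proposition \ref{12}, $(S, +_c)$ is genuinely an abelian group with zero $c$, so the statement "$K_c$ is a subgroup" is meaningful. To verify it, I would note that $c \sim c$ by reflexivity, so $c \in K_c$; that if $x, y \in K_c$ then $x \sim c$ and $y \sim c$, whence $x \ast y \sim c \ast c$ by the congruence property, and then $x +_c y = c \ast (x \ast y) \sim c \ast (c \ast c) = c$ (using $c \ast (c \ast c) = c$, which is the zero-element computation from the proof of Proposition \ref{12}), so $K_c$ is closed under $+_c$; and that if $x \in K_c$ then $-_c x = x \ast (c \ast c) \sim c \ast (c \ast c) = c$, so $K_c$ is closed under inverses. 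Hence $K_c$ is a subgroup.

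For part (ii), I would again use that $(S, +_c)$ is an abelian group and that the relation "$x - y \in K$" is the standard coset equivalence relation for a subgroup $K$ of an abelian group, hence automatically an equivalence relation. The only thing to check is that it is a \emph{congruence} for $\ast$, i.e. that $x \sim y$ and $u \sim v$ imply $x \ast u \sim y \ast v$. Writing $\delta = x -_c y \in K$ and $\varepsilon = u -_c v \in K$, I would like to compute $(x \ast u) -_c (y \ast v)$ in the group $(S, +_c)$ and show it lies in $K$; by Theorem \ref{2.3} the operation $\ast$ is given by $p \ast q = (c \ast c) -_c p -_c q$ on the abelian group $(S, +_c)$ — explicitly, set $B = (S,+_c)$ and $b = c \ast c$, so $\ast = \ast_b$ on $B$. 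Then a direct computation in $B$ gives
\begin{align*}
(x \ast_b u) -_c (y \ast_b v) &= (b -_c x -_c u) -_c (b -_c y -_c v)\\
&= (y -_c x) +_c (v -_c u) = (-_c\delta) +_c (-_c\varepsilon),
\end{align*}
which lies in $K$ since $K$ is a subgroup. Hence $\sim$ is a congruence.

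The main obstacle — really the only subtlety — is bookkeeping: one must be careful that $-_c y = y \ast (c \ast c)$ is indeed the inverse in $(S, +_c)$ (this is Proposition \ref{12}(i)) and that the expression of $\ast$ in terms of $+_c$ and the constant $c \ast c$ is exactly the one supplied by Theorem \ref{2.3}. Once that model is in place, both parts are immediate from the abelian-group facts, and no appeal to the axioms EG1--EG3 beyond what Proposition \ref{12} and Theorem \ref{2.3} already encode is needed. I would also remark, as a sanity check linking the two parts, that starting from a congruence $\sim$, passing to $K_c$, and then reconstructing the coset relation recovers $\sim$, and conversely — this makes the lemma a genuine bijective correspondence, though the statement as given only asserts the two implications.
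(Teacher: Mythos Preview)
Your proof is correct and follows essentially the same route as the paper. For part (i) the paper argues in one line that $\sim$ is also a congruence for $+_c$ and then invokes the standard subgroup/congruence correspondence for abelian groups, which is exactly what your explicit verification unpacks; for part (ii) both you and the paper invoke Theorem \ref{2.3} to express $\ast$ affinely in terms of the group operation and then compute $(x\ast u)-_c(y\ast v)$ directly---the only cosmetic difference is that the paper works in the ambient model ${}_aA$ while you work intrinsically in $(S,+_c)$ with constant $b=c\ast c$, which is arguably cleaner.
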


\begin{proof} i) Assume $x\sim y$ and $u\sim v$. We have
$$x+_cu=c\ast (x\ast u)\cong c\ast (y\ast y)=y+_cv.$$
Thus, $\sim$ is also a congruence in the group $(S,+_c)$ and hence, the result follows from the well-know correspondence between subgroups and congruences of abelian groups.

ii) The same correspondence shows that $\sim$ is a congruence in an abelian group $(S,+_c)$.
It remains to show that if $x\sim y$ and $u\sim v$, then $x\ast u\sim y\ast v$. 
Thanks to Theorem \ref{2.3}, we can assume that $S=\, _aA$. In this case, we have
$$x+_cy=a\ast (x\ast y)=a-c-(a-x-y)=x+y-a.$$ 
Thus, $K$ satisfies the following 
properties: a) $c\in K$, b) if $x,y\in K$, then $x+y-c\in K$ and c) if $x\in K$, then $2c-x\in K$. By definition, $x\sim y$ if and only if $$K\ni x-_cy=x+_c(2c-y)=x-y-c.$$
Similarly, $u-v-c\in K$. Now we have
$$y\ast v-x\ast u-c=a-y-v-a+x+u-c=(x-y-c)+(u-v-c)+c\in K,$$
thanks to the property b). 
\end{proof}

\begin{Co} The constructions in Lemma \ref{cong_sub} establish a one-to-one correspondence between congruences on $S$ and subgroups of $(S,+_c)$.
\end{Co}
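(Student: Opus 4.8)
The plan is to fix $c\in S$ and to show that the two assignments of Lemma \ref{cong_sub} are mutually inverse bijections between the set of congruences on the elliptic group $S$ and the set of subgroups of the abelian group $(S,+_c)$. Write $\Phi$ for the map sending a congruence $\sim$ to the subgroup $K_c=\{x\in S\mid x\sim c\}$, and $\Psi$ for the map sending a subgroup $K$ of $(S,+_c)$ to the relation $\sim_K$ defined by $x\sim_K y\iff x+_c(-_c y)\in K$. Both are well defined: $\Phi$ by part i) of Lemma \ref{cong_sub}, and $\Psi$ by part ii).

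First I would check $\Phi\circ\Psi=\id$. Given a subgroup $K$ of $(S,+_c)$, the element $c$ is the neutral element of $(S,+_c)$, so $-_c c=c$ and $x+_c(-_c c)=x+_c c=x$ for every $x$. Hence $\Phi(\Psi(K))=\{x\in S\mid x\sim_K c\}=\{x\in S\mid x+_c c\in K\}=\{x\in S\mid x\in K\}=K$. Next I would check $\Psi\circ\Phi=\id$. Let $\sim$ be a congruence on $S$ and put $K_c=\Phi(\sim)$. By part i) of Lemma \ref{cong_sub}, $\sim$ is in particular a congruence of the abelian group $(S,+_c)$. For abelian groups, the correspondence between congruences and subgroups is classical: a group congruence is completely recovered from the class of its neutral element by the rule $x\sim y\iff x+_c(-_c y)\in K_c$ (recall that the neutral element of $(S,+_c)$ is $c$, and $K_c$ is precisely its $\sim$-class). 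This rule is exactly the relation $\sim_{K_c}=\Psi(\Phi(\sim))$, so $\Psi(\Phi(\sim))=\sim$.

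The content here is entirely carried by Lemma \ref{cong_sub}; the only point to be careful about is that the relation $\sim_K$ produced by $\Psi$ is, at first sight, only visibly a congruence for the abelian operation $+_c$, and it is part ii) of Lemma \ref{cong_sub} (whose proof uses Theorem \ref{2.3}) that guarantees it is genuinely a congruence for the elliptic operation $\ast$. Once the two constructions are known to be well defined in the stated way, the asserted bijection is inherited verbatim from the classical subgroup/congruence correspondence for the abelian group $(S,+_c)$, so I do not expect any further obstacle.
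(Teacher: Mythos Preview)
Your proposal is correct and is exactly the argument the paper leaves implicit: the corollary is stated without proof, and your verification that $\Phi$ and $\Psi$ are mutually inverse---reducing to the classical subgroup/congruence bijection for the abelian group $(S,+_c)$ via Lemma \ref{cong_sub}---is precisely the intended justification.
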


\section{Free elliptic groups}\label{5free}\label{sec8}

\subsection{The description}

\subsubsection{The finitely generated case}

The first part of Lemma \ref{mor1b} shows that $_1\Z$ is a free elliptic group with one generator $0$. We now describe other free objects. As usual, let $e_1,\cdots,e_n$ denote the standard basis in the free abelian group $\Z^n$:
$$e_1=(1,\cdots,0) , \cdots, e_n=(0,\cdots,1).$$

\begin{Th} The elliptic group $_{e_1}\Z^n$ is a free object in $\Ell$, on the $n$-element set $\{0,e_2,\cdots,e_n \}$. In other words, $f\mapsto (f(0),f(e_2),\cdots, f(e_n))$ induces an isomorphism of elliptic groups:
$$\mor\left(\, _{e_1}\Z^n, \, _aA\right)\cong \, _{(a,\cdots,a)}A^n.$$
\end{Th}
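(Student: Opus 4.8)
The plan is to use the description of morphisms out of $_aA$-type elliptic groups provided by Proposition \ref{mor_el}, applied with source $_{e_1}\Z^n$. First I would observe that since $\Z^n$ is free abelian on $e_1,\dots,e_n$, a group homomorphism $f_1:\Z^n\to A$ is the same thing as an $n$-tuple $(f_1(e_1),\dots,f_1(e_n))\in A^n$, freely chosen. By Proposition \ref{mor_el}, a morphism of elliptic groups $f:\,_{e_1}\Z^n\to\,_aA$ is exactly an affine map $f=f_0+f_1$ with $f_0\in A$, $f_1\in\Hom(\Z^n,A)$, subject to the single compatibility relation $3f_0+f_1(e_1)=a$. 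So the data of such an $f$ is: a constant $f_0\in A$, and values $f_1(e_1),\dots,f_1(e_n)\in A$, constrained only by $f_1(e_1)=a-3f_0$.

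Next I would set up the candidate bijection. Given $f$, record $\big(f(0),f(e_2),\dots,f(e_n)\big)$; note $f(0)=f_0$ and $f(e_i)=f_0+f_1(e_i)$ for $i\geq 2$. I claim this is a bijection onto $A^n$: given an arbitrary tuple $(b_1,b_2,\dots,b_n)\in A^n$, set $f_0:=b_1$, then $f_1(e_1):=a-3b_1$ is forced by the compatibility condition, and $f_1(e_i):=b_i-b_1$ for $i\geq 2$ are free choices; these determine a unique homomorphism $f_1:\Z^n\to A$ and hence a unique morphism $f=f_0+f_1$ of elliptic groups with the prescribed values. This shows set-theoretic bijectivity, which already proves $_{e_1}\Z^n$ is free on $\{0,e_2,\dots,e_n\}$.

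It then remains to identify the elliptic group structure on $\mor(_{e_1}\Z^n,\,_aA)$ — the one from Section \ref{sec6}, where $(f\et g)(s)=f(s)\et g(s)$ in $_aA$, i.e.\ $(f\et g)(s)=a-f(s)-g(s)$ — with the structure $\ast_{(a,\dots,a)}$ on $A^n$, under which $(b_i)\ast(c_i)=(a-b_i-c_i)_i$. Evaluating $f\et g$ at $0$ gives $a-f(0)-g(0)$, and at $e_i$ gives $a-f(e_i)-g(e_i)$, so the recorded tuple of $f\et g$ is exactly the coordinatewise $\ast_{(a,\dots,a)}$-product of the tuples of $f$ and $g$. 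Hence the bijection is an isomorphism of elliptic groups, as claimed. I don't anticipate a genuine obstacle here; the only mild care needed is to be consistent about which operation symbol denotes $\ast$ in the target and to make sure the freeness claim for $\Z^n$ as an abelian group is invoked correctly (a homomorphism out of $\Z^n$ is determined by, and can take arbitrary values on, the basis vectors). Everything else is the bookkeeping that Proposition \ref{mor_el} and Lemma \ref{ab-el} have already set up.
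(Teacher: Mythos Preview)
Your proposal is correct and follows essentially the same route as the paper: both arguments hinge on Proposition~\ref{mor_el} to parametrise morphisms out of $_{e_1}\Z^n$ by a constant $f_0$ and a homomorphism $f_1$, with $f_1(e_1)$ forced by the compatibility condition and $f_1(e_2),\dots,f_1(e_n)$ free. The only differences are cosmetic: where the paper verifies by hand that the candidate formula defines a morphism, you invoke the converse direction of Proposition~\ref{mor_el}; and you additionally check that the evaluation map respects the pointwise $\ast$-structure on $\mor$, which the paper leaves implicit. (One notational slip: you write $\et$ for the pointwise operation on $\mor$, but in this paper $\et$ denotes the tensor product; the operation you mean is $\ast$.)
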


\begin{proof}
Recall that the elliptic group structure on $_{e_1}\Z^n$ is given by 
$$(x_1,\cdots,x_n)\ast (y_1,\cdots,y_n)=(1-x_1-y_1,-x_2-y_2,\cdots, -x_n-y_n).$$
We have to show that for any elliptic group $S$ and elements $s_1,\cdots,s_n\in S$, there exists a unique morphism of elliptic groups $f:\, _{e_1}\Z^n\to S$, such that $f(0)=s_1,$ and $f(e_i)=s_i$ for all $2\leq i\leq n$. 
According to Theorem \ref{2.3}, we can assume $S=\, _cA$ for an abelian group $A$ and $c\in A$. 

{\bf Uniqueness}. Assume $f:\, _{e_1}\Z^n\to \, _cA$ is such a morphism. According to Proposition \ref{mor_el}, we have $f=f_0+f_1$, where $f_0$ is the constant function with value $f_0\in A$ and $f_1:\Z^n\to A$ is a group homomorphism. Since $f_0=f(0)=s_1$, is follows that $f_0$ is uniquely defined. On the other hand, $f_1$ is completely determined by the values $f_1(e_j)$ for $1\leq j\leq n$, as $f_1$ is a group homomorphism. Since $0\ast_{e_1}0=e_1$, we see that $f(e_1)=f(0)\ast_c f(0)=c-2s_1$.
Quite similarly, for any $2\leq i\leq n$, we have $f_1(e_i)=f(e_i)-f(0)=s_i-s_1$ and the uniqueness of $f$ is proven.

{\bf Existence}. The previous argument suggest that we consider
$$f(x_1,\cdots,x_n)=s_1+x_1(c-3s_1)+\sum_{i=2}^nx_i(s_i-s_1).$$
This formula defines a map $\Z^n\to A$. We first check that this map takes the expected values at $0,e_2,\cdots, e_n$. In fact, we have
$$f(0,\cdots,0)=s_1$$
and for any $2\leq i\leq n$, we have
$$f(e_i)=s_1+ 1(s_i-s_1)=s_i.$$
It thus only remains to be checked that this map is a morphism of elliptic groups. We have
\begin{align*}
f((x_1,\cdots,x_n)\ast_{e_1}(y_1,\cdots,y_n))&=f(1-x_1-y_1,-x_2-y_2,\cdots,-x_n-y_n)\\
&=s_1+(1-x_1-y_1)(c-3s_1)+\sum_{i=2}^n(-x_i-y_i)(s_i-s_1)\\
&=(c-2s_1)+(-x_1-y_1) (c-3s_1) +\sum_{i=2}^n(-x_i-y_i)(s_i-s_1).
\end{align*}
On the other hand
\begin{align*} f(x_1,\cdots,x_n)\ast_{c}f(y_1,\cdots,y_n)&=c-f(x_1,\cdots,x_n)-f(y_1,\cdots,y_n)\\
&=(c-2s_1)-(x_1+y_1)(c-3s_1)-\sum_{i=2}^n(x_i+y_i)(s_i-s_1).
\end{align*}
Comparing these expressions we see that
$f$ is a morphism of elliptic groups, implying the result.
\end{proof}

\subsubsection{The general case}

We have constructed the free elliptic group over a finite nonempty set. The free elliptic group on the empty set is just the empty set. The construction of a free elliptic group generated by an infinite set is quit similar to the finite case: If $X$ is any nonempty set, we let $A$ to be the free abelian group generated by $X$. Choose an element $a\in X$ and consider the elliptic group $_aA$. It is a free elliptic group on the set $0\cup X'$, where $X'=X\setminus a$.

\subsection{Finitely generated elliptic groups}

Let $S$ be an elliptic group. We say that a subset $X\subset S$ generates $S$ if $S$ is the minimal elliptic subgroup containing $X$. An elliptic group $S$ is called finitely generated if there is a finite subset $X\subset S$ which generates $S$. Equivalently, if there exists a surjective morphism of elliptic groups $F\to S$, where $F$ is a finitely generated free elliptic group.

\begin{Le} i) If $S$ is a nonempty and finitely generated elliptic group, the associated group is a finitely generated abelian group.

ii) If $A$ is a finitely generated abelian group, then for any $c\in A$, the elliptic group $_cA$ is a finitely generated elliptic group.
\end{Le}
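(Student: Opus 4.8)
The plan is to reduce both statements to facts about the associated abelian groups, using the description of finitely generated free elliptic groups, Theorem \ref{2.3}, Proposition \ref{12}, Lemma \ref{ab-el} and the easy fact that a morphism of elliptic groups is a group homomorphism of associated groups.

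For part i), since $S$ is nonempty and finitely generated there is a surjective morphism of elliptic groups $\pi\colon F\to S$ with $F$ a finitely generated free elliptic group; by the description of free objects, $F$ is isomorphic to $_{e_1}\Z^n$ for some $n\geq 1$. Choosing $0\in F$ as basepoint, Lemma \ref{ab-el} iii) identifies the associated group $(F,+_0)$ with $\Z^n$. By the easy fact recalled above, $\pi$ is a group homomorphism $(F,+_0)\to (S,+_{\pi(0)})$, and it is surjective because $\pi$ is surjective as a map. Hence $(S,+_{\pi(0)})$ is a quotient of $\Z^n$, so it is a finitely generated abelian group; by Proposition \ref{12} ii) all the groups $(S,+_c)$ are isomorphic, which gives the claim.

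For part ii), let $a_1,\dots,a_m$ generate $A$ as an abelian group and set $X=\{0,a_1,\dots,a_m\}\subset {}_cA$; I claim $X$ generates $_cA$. Let $E\subseteq A$ be the elliptic subgroup generated by $X$, that is, the smallest subset containing $X$ and closed under $x\ast_c y=c-x-y$. Since $0\in E$ we get $c=0\ast_c 0\in E$, hence $c\ast_c x=-x\in E$ for every $x\in E$, and for $x,y\in E$ also $0\ast_c(x\ast_c y)\in E$. But $0\ast_c(x\ast_c y)$ is exactly $x+_0 y$, the operation of the group $(A,+_0)$ associated to $_cA$ at the basepoint $0$, which by Lemma \ref{ab-el} iii) is $A$ with its original addition; likewise $x\ast_c c = -_0 x$. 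Thus $E$ is a subgroup of $A$ containing $a_1,\dots,a_m$, so $E=A$, and $_cA$ is finitely generated.

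The arguments are short, and the only point requiring a little care is in part ii): one must check that an elliptic subgroup containing the point $0$ is automatically closed under the associated addition $+_0$ and negation $-_0$, which is precisely where the identities $0\ast_c 0=c$, $-_0 x = x\ast_c(0\ast_c 0)$ and $x+_0 y = 0\ast_c(x\ast_c y)$ enter. (In general an elliptic subgroup through $0$ is strictly smaller than an arbitrary $+_0$-subgroup through $0$, since it must also contain $0\ast_c 0=c$; here this causes no trouble because $c$ lies in the subgroup generated by $A$ itself.)
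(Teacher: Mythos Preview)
Your proof is correct. Part i) is essentially identical to the paper's argument.

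For part ii) you take a different but equally valid route. The paper uses the free elliptic group $_{e_1}\Z^{k+1}$: it sends the free generators $0, e_2, \dots, e_{k+1}$ to $0, a_1, \dots, a_k$ in $_cA$, then observes that this morphism, viewed as a group homomorphism $\Z^{k+1}=({}_{e_1}\Z^{k+1}, +_0) \to ({}_cA, +_0) = A$, has all the group generators of $A$ in its image and is therefore surjective. You instead work internally: you show directly that the elliptic subgroup $E$ generated by $\{0, a_1, \dots, a_m\}$ is closed under the original addition and negation of $A$ (because $0 \in E$ forces $c = 0 \ast_c 0 \in E$, whence $x + y = 0 \ast_c (x \ast_c y)$ and $-x = c \ast_c x$ stay in $E$), so $E$ is a subgroup of $A$ containing the generators, hence $E = A$. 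Your argument is a bit more elementary in that it avoids invoking the free object and its universal property; the paper's is a touch more conceptual. Both reach the same conclusion with comparable effort.

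One minor expository quibble: your closing parenthetical is worded backwards. An elliptic subgroup through $0$ is not ``strictly smaller'' than a $+_0$-subgroup; what you have actually shown is that every elliptic subgroup containing $0$ \emph{is} a $+_0$-subgroup (and moreover contains $c$), whereas a $+_0$-subgroup need not contain $c$ and hence need not be closed under $\ast_c$. So it is the \emph{class} of elliptic subgroups through $0$ that sits inside the class of $+_0$-subgroups, not the individual subgroups that are smaller. This is only a wording issue and does not affect the validity of your argument.
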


\begin{proof} i) Since the associated group $(S,+_c)$ is independent of the chosen point $c\in S$, it suffices to show that $(S,+_c)$ is a finitely generated abelian group for at least one $c$. By assumption, there is a surjective morphism of elliptic groups $_{e_1}\Z^n\xto{f} S$, which gives rise to a surjective homomorphism of abelian groups $(\Z^n,+)\to (S,+_{f(0)})$. As such, $(S,+_{f(0)})$ is a finitely generated abelian group and part i) follows.

ii) Assume $a_1,\cdots, a_k$ are generators of an abelian group $A$. Consider the elliptic group $_{e_1}\Z^{k+1}$, which is the free elliptic group with generators $0,e_2,\cdots, e_{k+1}$. Thus, there is a homomorphism of elliptic groups $_{e_1}\Z^{k+1}\xto{f}\, _cA$, which satisfies $f(0)=0$ and $f(e_2)=a_1, \cdots, f(e_{k+1})=a_k$. The same $f$ can be seen as a homomorphism of abelian groups $\Z^{k+1}=(\,_{e_1}\Z^{k+1}, +_0)\to (_cA,+_0)=A$. Since $f$ is a group homomorphism whose image contains all the generators of the group $A$, it is surjective. It follows that $S$ is a finitely generated elliptic group.
\end{proof}

\subsection{A funny example}

We now turn to $_0\Z$. Recall that it is equipped with the operation $\ast_0$, defined by $x\ast_0y=-x-y$. Observe that the subsets $3\Z$, $1+3\Z$ and $2+3\Z$ are closed under $\ast_0$. So, they are elliptic 
subgroups of $_0\Z$, giving a partition by elliptic subgroups
$$_0\Z=(3\Z) \biguplus (1+3\Z )\biguplus (2+3\Z).$$ 
Here, $ \biguplus$ denotes the disjoint union, i.e. the coproduct in the category of sets.

The elliptic groups $1+3\mathbb{Z}$ and $2+3\mathbb{Z}$ are isomorphic to the elliptic group $_1\mathbb{Z}$. Hence, they are free with a single generator each. The corresponding isomorphisms
$$f:\, _1\mathbb{Z}\to 1+ 3\mathbb{Z},\ \ g:\, _1\mathbb{Z}\to 2+ 3\mathbb{Z}$$
are given by
$f(x)=3x+1$ and $g(x)=3x-1$. Thus, two out of the three elliptic subgroups in the partition of $_0\Z$ are free with one generator. The third one $3\Z$ is isomorphic to $_0\Z$, by sending $\Z\ni x\mapsto 3x\in \, 3\Z$. Hence, we can repeat this and obtain another partition
$$_0\Z= (1+3\Z) \biguplus (9\Z) \biguplus (3+9\Z) \biguplus (6+9\Z) \biguplus (2+3\Z),$$
where now four members are free with one generators and the fifth is isomorphic to the whole object. This can, of course, be repeated indefinitely.

\section{Indecomposable finitely generated elliptic groups}\label{sec9}

Let $\bf C$ be a category with products. An object $A$ is called \emph{indecomposable} if for any isomorphism $A\xto{(f_1,f_2)} A_1\times A_2$, either $A_1$ or $A_2$ is the terminal object. It is well-known, that in the category of finitely-generated abelian groups, the indecomposable objects are exactly $\mathbb{Z}$ and $\mathbb{Z}/p^k\mathbb{Z}$, where $p$ is prime and $k\geq 1$. Moreover, any finitely generated abelian group is isomorphic to the product of indecomposable ones and such a decomposition is unique up to the order of the factors. We can translate these classical results to elliptic groups.

\begin{Le}\label{51} Let $A$ and $B$ be groups.  For any elements $a\in A$ and $b\in B$, the elliptic groups $_{(a,b)}\, (A\oplus B)$ and $_aA\times\, _bB$ are isomorphic.
\end{Le}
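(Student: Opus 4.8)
The plan is to write down an explicit map between the two elliptic groups and check it is a morphism by unwinding the definition of $\ast$ on each side. Recall that $_{(a,b)}(A\oplus B)$ has underlying set $A\oplus B$ with operation
$$(x,y)\ast (x',y') = (a,b)-(x,y)-(x',y') = (a-x-x',\, b-y-y'),$$
while $_aA\times\, _bB$ has underlying set $A\times B$ with the componentwise operation
$$(x,y)\ast(x',y') = (x\ast_a x',\, y\ast_b y') = (a-x-x',\, b-y-y').$$
So in fact the underlying sets are literally the same set $A\times B$ and the two binary operations are given by the very same formula. The claimed isomorphism is therefore simply the identity map on $A\times B$.

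First I would recall from the remarks in Section~\ref{sec3} that the product $S\times T$ of elliptic groups carries the componentwise operation, so $_aA\times\, _bB$ is exactly $A\times B$ with $((x,y),(x',y'))\mapsto (x\ast_a x', y\ast_b y')$. Next I would substitute the definition $u\ast_a v = a-u-v$ (Lemma~\ref{ab-el}i)) in each coordinate, obtaining $(a-x-x', b-y-y')$. Then I would compute the operation on $_{(a,b)}(A\oplus B)$ directly from Lemma~\ref{ab-el}i) applied to the abelian group $A\oplus B$ with chosen element $(a,b)$, namely $(x,y)\ast(x',y') = (a,b)-(x,y)-(x',y')$, which under the identification of $A\oplus B$ with $A\times B$ as sets is again $(a-x-x', b-y-y')$. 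Since the two operations agree on the nose, the identity map is an isomorphism of elliptic groups, and this finishes the proof.

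There is essentially no obstacle here: the lemma is a bookkeeping statement reconciling two notations for the same object, and the only thing to be careful about is making explicit the (canonical) identification of the underlying set of $A\oplus B$ with the underlying set of the product $A\times B$, so that the word ``isomorphic'' in the statement is literally witnessed by the identity. One could alternatively phrase it functorially, but the direct check is shortest. If one prefers a map that is not literally the identity, one could invoke Corollary~\ref{homiso} with $\mathcal I((a,b))$, but that is unnecessary overkill for a statement this elementary.
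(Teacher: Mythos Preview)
Your proof is correct and follows exactly the same approach as the paper: both observe that the underlying sets coincide and then verify by direct computation that the two $\ast$-operations are given by the identical formula $(a-x-x',\,b-y-y')$, so the identity map is an isomorphism.
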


\begin{proof} Both have the same underlying set. The operation in $_{(a,b)}\,
	(A\oplus B)$ is given by
$$(x,y)\ast_{(a,b)}(x',y')=(a,b)-(x,y)-(x',y')=(a-x-x',b-y-y')=(x\ast_a x',y\ast_by').$$
The last expression is the result of the operation in $_aA\times\, _bB$ and hence, the result follows.
\end{proof}

\begin{Le} There are exactly two elliptic groups, up to isomorphisms $_1 \mathbb{Z}$ and $_0 \mathbb{Z}$, whose associated groups are infinite cyclic groups. 
\end{Le}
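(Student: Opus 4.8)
The plan is to reduce everything to Theorem \ref{2.3} and Corollary \ref{homiso}. First, let $S$ be an elliptic group whose associated group is infinite cyclic. Since the associated group is independent of the chosen base point, Theorem \ref{2.3} tells us that $S$ is isomorphic to $\, _c\Z$ for some $c\in\Z$ (taking $A=(S,+_o)\cong\Z$ and $c$ the image of $o\ast o$). So the whole question is: for which pairs $c,c'\in\Z$ are $\, _c\Z$ and $\, _{c'}\Z$ isomorphic?

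Next I would invoke Corollary \ref{homiso} with $A=B=\Z$. It says $\, _c\Z\cong\, _{c'}\Z$ if and only if $c'\in 3\Z+\mathcal I(c)$, where $\mathcal I(c)$ is the set of values $f(c)$ as $f$ runs over the group isomorphisms $\Z\to\Z$. The only such isomorphisms are $\pm\,\id$, so $\mathcal I(c)=\{c,-c\}$ and the criterion becomes
$$\, _c\Z\cong\, _{c'}\Z \iff c'\equiv c \ \text{or}\ c'\equiv -c \pmod 3.$$
Reducing modulo $3$, the residues $\{0,1,2\}$ are partitioned by the equivalence ``$r\sim \pm r$'' into the two classes $\{0\}$ and $\{1,2\}$. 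Hence there are exactly two isomorphism classes of such elliptic groups, represented by $\, _0\Z$ and $\, _1\Z$ (and $\, _1\Z\cong\, _2\Z$, consistently with the ``funny example'' above).

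Finally I would record that these two are genuinely non-isomorphic: $0\not\equiv\pm1\pmod3$, so the criterion fails; equivalently, $0$ is a flex point of $\, _0\Z$ while $\, _1\Z$ has no flex point at all, since by Lemma \ref{62} the elliptic group $\, _c\Z$ has a flex point precisely when $c\in 3\Z$, and $1\notin 3\Z$. There is no real obstacle here; the only point that requires care is the explicit identification $\mathcal I(c)=\{\pm c\}$, i.e. the remark that $\aut(\Z)=\{\pm1\}$, and then checking that the negation action on $\Z/3\Z$ has exactly two orbits.
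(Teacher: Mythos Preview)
Your proposal is correct and follows essentially the same route as the paper: identify $\mathcal I(c)=\{c,-c\}$ from $\aut(\Z)=\{\pm 1\}$, apply Corollary~\ref{homiso} to get the criterion $c'\equiv\pm c\pmod 3$, and read off the two orbits. Your write-up is slightly more explicit (the reduction via Theorem~\ref{2.3} and the flex-point check that $_0\Z\not\cong\,_1\Z$), but the argument is the same.
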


\begin{proof} Since any group isomorphism $\mathbb{Z} \to \mathbb{Z} $ is given by either $x\mapsto x$ or $x\mapsto -x$, we see that in the notation of Corollary \ref{homiso}, we have $\mathcal{I}(a)=\{a,-a\}$. Hence, for integers $m,n$, the elliptic groups $_m\mathbb{Z}$ and $_n\mathbb{Z}$ are isomorphic if and only if $m\equiv n$ mod(3) or $m\equiv -n$ mod(3), thanks to Corollary \ref{homiso}. It follows that $_m\mathbb{Z}$ is isomorphic to either $_1\mathbb{Z}$ or $_0\mathbb{Z}$ and the result follows.
\end{proof}

\begin{Le} Let $p\not =3$. Up to isomorphisms, there is only one elliptic group $_0 \mathbb{Z}/p^k\Z$ whose associated group is the cyclic group $\mathbb{Z}/p^k\mathbb{Z}$.
\end{Le}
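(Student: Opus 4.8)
The plan is to invoke Corollary \ref{homiso} with $A = B = \Z/p^k\Z$. Suppose $_a(\Z/p^k\Z)$ has associated group $\Z/p^k\Z$; by Corollary \ref{homiso}, it is isomorphic to $_0(\Z/p^k\Z)$ precisely when $0 \in 3B + \mathcal{I}(a)$, i.e. when there is a group automorphism $\varphi$ of $\Z/p^k\Z$ with $\varphi(a) \in 3(\Z/p^k\Z)$. So the task reduces to showing that for \emph{every} $a \in \Z/p^k\Z$, one can find such a $\varphi$; then every $_a(\Z/p^k\Z)$ is isomorphic to $_0(\Z/p^k\Z)$, and since $_0(\Z/p^k\Z)$ has a flex point it is certainly one valid representative, giving the uniqueness.

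First I would note that since $p \neq 3$, multiplication by $3$ is invertible modulo $p^k$: indeed $\gcd(3, p^k) = 1$, so $3$ is a unit in $\Z/p^k\Z$, hence $3(\Z/p^k\Z) = \Z/p^k\Z$. This is exactly the hypothesis of Corollary \ref{3invertible}, and in fact that corollary applies verbatim: multiplication by $3$ is a surjective (here, bijective) endomorphism of $A = \Z/p^k\Z$, so $_a A$ and $_0 A$ are isomorphic for every $a$. So the cleanest route is simply: apply Corollary \ref{3invertible} to conclude that every elliptic group of the form $_a(\Z/p^k\Z)$ is isomorphic to $_0(\Z/p^k\Z)$; combined with the classification of abelian groups (any elliptic group whose associated group is $\Z/p^k\Z$ is, by Theorem \ref{2.3}, of the form $_c(\Z/p^k\Z)$ for some $c$), this establishes that there is exactly one such elliptic group up to isomorphism.

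The only point needing care is the reduction step via Theorem \ref{2.3}: given an abstract elliptic group $S$ whose associated abelian group $(S,+_o)$ is isomorphic to $\Z/p^k\Z$, Theorem \ref{2.3} identifies $S$ with $_c A$ where $A = (S,+_o)$ and $c = o \ast o$. Transporting along the isomorphism $A \cong \Z/p^k\Z$, we get $S \cong \,_{c'}(\Z/p^k\Z)$ for the corresponding $c'$, and then Corollary \ref{3invertible} finishes the job. There is essentially no obstacle here; the substantive input ($3$ being invertible mod $p^k$ when $p \neq 3$) is entirely elementary, and the rest is bookkeeping with results already established. The statement should really be read as the $p \neq 3$ counterpart to the infinite-cyclic case, where $\mathcal{I}(a) = \{a, -a\}$ forced two classes — here the extra flexibility from $3$ being a unit collapses them to one.
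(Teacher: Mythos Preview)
Your proposal is correct and follows essentially the same route as the paper: the paper's proof is the single sentence ``Since $3$ is invertible, we can apply Corollary \ref{3invertible},'' which is exactly the core of your argument. Your extra discussion of Theorem \ref{2.3} and Corollary \ref{homiso} is sound but merely unpacks what the paper leaves implicit.
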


\begin{proof} Since $3$ is invertible, we can apply Corollary \ref{3invertible}.
\end{proof}

\begin{Le} Up to isomorphisms, there are exactly two elliptic groups $_1 \mathbb{Z}/3^k\mathbb{Z}$ and $_0 \mathbb{Z}/3^k\mathbb{Z}$ whose associated group is the cyclic group $\mathbb{Z}/3^k\mathbb{Z}$. 
\end{Le}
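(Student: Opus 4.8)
The plan is to derive everything from Corollary \ref{homiso} in the case $A=B=\mathbb{Z}/3^k\mathbb{Z}$, where of course $k\geq 1$. First, by Theorem \ref{2.3} together with Lemma \ref{ab-el} ii), any elliptic group whose associated group is isomorphic to $\mathbb{Z}/3^k\mathbb{Z}$ is itself isomorphic to $_c(\mathbb{Z}/3^k\mathbb{Z})$ for a suitable $c\in\mathbb{Z}/3^k\mathbb{Z}$: indeed, if $A=(S,+_o)\cong\mathbb{Z}/3^k\mathbb{Z}$ and $c=o\ast o$, then $S={}_cA$ by Theorem \ref{2.3}, and transporting along any group isomorphism $A\cong\mathbb{Z}/3^k\mathbb{Z}$ gives an isomorphism of elliptic groups onto some $_{c'}(\mathbb{Z}/3^k\mathbb{Z})$. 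So it is enough to classify the elliptic groups $_a(\mathbb{Z}/3^k\mathbb{Z})$, with $a$ ranging over $\mathbb{Z}/3^k\mathbb{Z}$, up to isomorphism.

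Next I would compute the set $\mathcal{I}(a)$ appearing in Corollary \ref{homiso}. Since every group automorphism of $\mathbb{Z}/3^k\mathbb{Z}$ is multiplication by a unit, we get $\mathcal{I}(a)=\{ua\mid u\in(\mathbb{Z}/3^k\mathbb{Z})^\times\}$. Corollary \ref{homiso} then says that $_a(\mathbb{Z}/3^k\mathbb{Z})\cong\, _b(\mathbb{Z}/3^k\mathbb{Z})$ if and only if $b-ua\in 3(\mathbb{Z}/3^k\mathbb{Z})$ for some unit $u$.

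Now the case analysis. If $3\mid a$, write $a=3a'$; then $0-1\cdot a=-3a'\in 3(\mathbb{Z}/3^k\mathbb{Z})$, so $_a(\mathbb{Z}/3^k\mathbb{Z})\cong\, _0(\mathbb{Z}/3^k\mathbb{Z})$. If $3\nmid a$, then $a$ is a unit, so $1=a^{-1}a\in\mathcal{I}(a)$ and hence $_a(\mathbb{Z}/3^k\mathbb{Z})\cong\, _1(\mathbb{Z}/3^k\mathbb{Z})$. This already shows there are at most the two claimed isomorphism classes.

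Finally, to see that $_0(\mathbb{Z}/3^k\mathbb{Z})$ and $_1(\mathbb{Z}/3^k\mathbb{Z})$ are genuinely non-isomorphic, I would argue that $0\notin 3(\mathbb{Z}/3^k\mathbb{Z})+\mathcal{I}(1)$: every element of that set has the form $3t+u$ with $u$ a unit, and its reduction modulo $3$ agrees with that of $u$, which is nonzero; hence $3t+u$ is again a unit, in particular $3t+u\neq 0$. Corollary \ref{homiso} then gives $_1(\mathbb{Z}/3^k\mathbb{Z})\not\cong\, _0(\mathbb{Z}/3^k\mathbb{Z})$. I do not expect any real obstacle; the only points needing a little care are the appeal to Theorem \ref{2.3} to put an abstract elliptic group into the concrete form $_c(\mathbb{Z}/3^k\mathbb{Z})$, and keeping units modulo $3^k$ straight from residues modulo $3$ (this is where $k\geq 1$ enters, so that $3$ is not invertible).
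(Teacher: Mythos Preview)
Your proof is correct and follows essentially the same route as the paper's: both reduce to Corollary \ref{homiso}, split into the cases $a\in 3A$ and $a\notin 3A$, and in the latter case use that $a$ is a unit so $1\in\mathcal{I}(a)$. Your version is in fact more complete, since you explicitly justify the reduction to the form $_c(\mathbb{Z}/3^k\mathbb{Z})$ and verify that $_0(\mathbb{Z}/3^k\mathbb{Z})\not\cong{}_1(\mathbb{Z}/3^k\mathbb{Z})$, which the paper's proof of this lemma leaves implicit.
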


\begin{proof} Take $a\in \mathbb{Z}/3^k\mathbb{Z}=A$. If $a\in 3A$, then $_aA$ is isomorphic to $_0A$, thanks to Corollary \ref{homiso}. If $a\not \in 3A$, then $a$ is invertible in $A= \mathbb{Z}/3^k\mathbb{Z}$. So, $1=ab$ for some $b\in A$. Thus, $1= \mu_b(1)$, where $\mu_b:A\to A$ is the multiplication by $b$, which is an isomorphism of abelian groups Thus, in the notations of Corollary \ref{homiso}, we have $1\in \mathcal{I}(a)$. According to the same corollary, $_aA$ and $_1A$ are isomorphic. 
\end{proof}

The following fact was proven in \cite{schwenk}.

\begin{Th}\label{55} Any finitely generated elliptic group is isomorphic to the finite product of indecomposable elliptic groups. Moreover, the following is a full list of pairwise nonisomorphic, finitely generated, indecomposable elliptic groups
$$_0\Z,\, _1\Z,\, _0\Z/p^k\Z,\, _0\Z/3^k\Z,\,_1\Z/3^k,$$
where $p\not =3$ is a prime and $k\geq 1$ is an integer. 
\end{Th}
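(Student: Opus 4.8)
The plan is to combine Theorem \ref{2.3} with the classical structure theory of finitely generated abelian groups, and then to transport the uniqueness statement across the equivalence-of-invariants established in Section \ref{sec4}. The existence of a decomposition is the easy half. Given a finitely generated elliptic group $S$, choose $o\in S$, let $A=(S,+_o)$ and $c=o\ast o$, so that $S\cong\, _cA$ by Theorem \ref{2.3}. Since $A$ is a finitely generated abelian group, write $A\cong\Z^r\oplus\bigoplus_j \Z/q_j\Z$ with each $q_j$ a prime power. Under this isomorphism $c$ corresponds to a tuple $(c_\infty, (c_j)_j)$, and Lemma \ref{51} (applied repeatedly) gives
$$_cA\cong\, _{c_1^{(\infty)}}\Z\times\cdots\times\, _{c_r^{(\infty)}}\Z\times\prod_j\, _{c_j}\Z/q_j\Z.$$
Now invoke the three classification lemmas immediately preceding the theorem: each $_{m}\Z$ is isomorphic to $_0\Z$ or $_1\Z$; each $_a\Z/p^k\Z$ with $p\neq3$ is isomorphic to $_0\Z/p^k\Z$; and each $_a\Z/3^k\Z$ is isomorphic to $_0\Z/3^k\Z$ or $_1\Z/3^k\Z$. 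This already exhibits $S$ as a product of items from the stated list. One must also check these are genuinely indecomposable in $\Ell$: if $_0\Z\cong X_1\times X_2$ then, passing to associated abelian groups and using the Lemma in Section \ref{sec3} together with Lemma \ref{51}, $\Z$ decomposes, forcing one factor to have trivial associated group, hence to be $\mathbf 0$; the same argument works for the other four, using that $\Z$ and $\Z/p^k\Z$ are indecomposable abelian groups. Pairwise non-isomorphism: two with non-isomorphic associated abelian groups are non-isomorphic; the remaining coincidences ($_0\Z$ vs $_1\Z$, and $_0\Z/3^k\Z$ vs $_1\Z/3^k\Z$) are ruled out by Corollary \ref{homiso}, since $1\notin 3\Z$ and $1\notin 3(\Z/3^k\Z)+\mathcal I(0)=3(\Z/3^k\Z)$.

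The real work is uniqueness of the decomposition. The subtlety is that the passage $S\mapsto (S,+_o)$ is not injective on isomorphism classes, so uniqueness for abelian groups does not immediately transfer. The plan is to attach to each indecomposable $P$ on the list the pair consisting of its associated abelian group together with a $3$-adic refinement that distinguishes $_0$ from $_1$. Concretely, given $S\cong P_1\times\cdots\times P_s$, the associated abelian group is $A(P_1)\oplus\cdots\oplus A(P_s)$, where $A(_0\Z)=A(_1\Z)=\Z$, $A(_0\Z/p^k\Z)=A(_0\Z/3^k\Z)=A(_1\Z/3^k\Z)=\Z/p^k\Z$ (resp.\ $\Z/3^k\Z$). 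By the uniqueness theorem for finitely generated abelian groups, the multiset of the $A(P_i)$ is determined by $S$. This already pins down the number of infinite-cyclic factors, the number of $\Z/p^k\Z$-type factors for each $p\neq3$ and each $k$, and the number of $\Z/3^k\Z$-type factors for each $k$. What remains is, for each such slot, to determine how many of the corresponding factors are of the ``$_0$'' flavour versus the ``$_1$'' flavour.

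To extract that last datum invariantly I would use the element $c=o\ast o\in A$ modulo $3A$, which by Proposition \ref{mor_el}/Corollary \ref{homiso} is exactly the invariant controlling when $_cA\cong\, _0A$. More precisely, writing $S\cong\, _cA$ with $A=\bigoplus_i A(P_i)$ and $c=(c_i)$, each $c_i$ lies in $A(P_i)$ and the isomorphism class of $P_i$ is determined by $c_i\bmod 3A(P_i)$: for $A(P_i)=\Z$ the class of $c_i$ in $\Z/3\Z$ is $0$ (giving $_0\Z$) or $\pm1$ (giving $_1\Z$); for $A(P_i)=\Z/3^k\Z$ likewise $c_i\in 3(\Z/3^k\Z)$ or not; for $A(P_i)=\Z/p^k\Z$ with $p\ne3$ the class is irrelevant. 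So the decomposition data amount to the class of $c$ in $A/3A$, which is the image of $o\ast o$ under $A\to A/3A\cong(S,+_o)\otimes\Z/3\Z$. One then shows that $A/3A$, together with the distinguished element $\bar c$ and the direct-sum decomposition it respects, is independent of all choices and is invariant under elliptic-group isomorphism — this is where I expect the main friction, since one must check that an arbitrary isomorphism $\, _cA\to\, _{c'}A'$ (which by Proposition \ref{mor_el} is affine, $x\mapsto f_0+f_1(x)$ with $3f_0+f_1(c)=c'$) carries $\bar c$ to $\bar c'$ modulo $3$: indeed $c'\equiv f_1(c)\pmod{3A'}$ and $f_1$ is an abelian-group isomorphism, so the pair $(A/3A,\bar c)$ is an isomorphism invariant of $S$, up to compatible identification. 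Combined with the Krull--Schmidt uniqueness for $A$ and the fact that the $p$-primary blocks for different $p$ (and the free part) are canonical summands preserved by any isomorphism, this forces the multiset $\{P_i\}$ to be determined by $S$, completing the proof.

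Remark on scope: since this theorem is attributed to \cite{schwenk}, I would keep the write-up at the level of indicating how the classical abelian Krull--Schmidt theorem plus Corollary \ref{homiso} and Lemma \ref{51} assemble into the statement, rather than reproving Krull--Schmidt; the only genuinely new point to spell out is the invariance of the $3$-torsion datum $\bar c\in A/3A$ under isomorphisms of elliptic groups, which is exactly Corollary \ref{homiso} read diagonally over the primary decomposition.
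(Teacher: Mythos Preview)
Your existence argument, your indecomposability argument, and your pairwise non-isomorphism argument are all correct and match the paper's proof essentially verbatim: reduce to $_cA$ by Theorem \ref{2.3}, split $A$ as a direct sum of indecomposable abelian groups, apply Lemma \ref{51}, then invoke the three preceding lemmas to normalise each factor to one of the listed forms; indecomposability follows because an elliptic product decomposition induces an abelian one.

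The problem is the second half of your proposal. You have misread the statement: the theorem asserts \emph{existence} of a decomposition into indecomposables and the classification of the indecomposables, but it does \emph{not} assert uniqueness of the decomposition --- and indeed uniqueness is false. Immediately after this theorem the paper remarks exactly this and records in Lemma \ref{66} the isomorphism
\[
(\,_1\Z)\times(\,_1\Z)\;\cong\;(\,_1\Z)\times(\,_0\Z),
\]
realised by $(x,y)\mapsto(x,2x+y-1)$. So the multiset of indecomposable factors is \emph{not} an invariant of $S$.

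Your uniqueness argument breaks at the sentence ``the $p$-primary blocks for different $p$ (and the free part) are canonical summands preserved by any isomorphism, this forces the multiset $\{P_i\}$ to be determined by $S$.'' The free part $\Z^r$ is preserved only as a whole, not factor by factor; the invariant you correctly isolate is the class of $\bar c$ in $A/3A$ \emph{up to the action of} $\aut(A)$. For $A=\Z^r$ this action is transitive on nonzero vectors of $(\Z/3\Z)^r$, so the only information retained is whether $\bar c=0$ or not --- which matches the corollary after Lemma \ref{66} but certainly does not recover the number of $_1\Z$ versus $_0\Z$ factors. Drop the entire uniqueness discussion and your proof is complete and aligned with the paper.
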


\begin{proof} Take a finitely generated elliptic group $S$. We know that the corresponding group is a finitely generated abelian group. Without loss of generality, we can assume that $S=\, _aA$, where $A$ is a finitely generated abelian group. Based on Lemma \ref{51}, we can conclude that $S$ is a finite product of elliptic groups of the form $_bB$, where $B$ is an indecomposable abelian group. Clearly, any such elliptic group will also be indecomposable as an elliptic group, because any decomposition of elliptic groups induces a decomposition of the corresponding groups. The rest just summarises previous lemmas.
\end{proof}

{\bf Remark}. Unlike abelian groups, the decomposition in Theorem \ref{55} is not unique. We have the following facts:

\begin{Le}\label{66} i) There is an isomorphism
$$f:\, (_1\Z) \,\times (_1\Z)=\, _{(1,1)}(\Z\oplus \Z)\to\, _{(1,0)}(\Z\oplus \Z)= (_1\Z) \times\, ( _0\Z),$$
given by $f(x,y)=(x,2x+y-1)$.

ii) For any integer $m$, the same map induces an isomorphism
$$\, (_1\Z) \,\times (_1\Z/3^m\Z)= \, _{(1,1)}(\Z \oplus \Z/3^m\Z)\to\, _{(1,0)}(\Z \oplus \Z/3^m\Z)=(_1\Z )\times\, (_0\Z/3^m\Z).$$

ii) For any integers $k\geq m$, the same map indu	ces an isomorphism
$$\, (_1\Z/3^k\Z) \,\times (_1\Z/3^m\Z)= \, _{(1,1)}(\Z/3^k\Z \oplus \Z/3^m\Z)\to\, _{(1,0)}(\Z/3^k\Z \oplus \Z/3^m\Z)=(_1\Z/3^k\Z )\times\, (_0\Z/3^m\Z).$$
\end{Le}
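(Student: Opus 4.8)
The plan is to verify directly that the single explicit map $f(x,y)=(x,2x+y-1)$ does the job in all three cases, since the statement claims \emph{the same} formula works each time. The unifying tool is Lemma \ref{51}, which lets us treat each target $_{(1,\epsilon)}(\Z'\oplus \Z'')$ as the elliptic group attached to the abelian group $\Z'\oplus \Z''$ with distinguished element $(1,\epsilon)$, where $\epsilon\in\{0,1\}$ and $\Z',\Z''$ are $\Z$ or $\Z/3^\bullet\Z$ as appropriate; I will then apply Proposition \ref{mor_el} (characterising morphisms of elliptic groups as affine maps satisfying the compatibility condition $3f_0+f_1(a)=b$) together with Corollary \ref{homiso}, or just check the morphism condition by hand.

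First I would treat case i) in full. Write $f=f_0+f_1$ with constant part $f_0=(0,-1)$ and linear part $f_1(x,y)=(x,2x+y)$, which is visibly a group automorphism of $\Z\oplus\Z$ (inverse $(u,v)\mapsto(u,v-2u)$). To see $f$ is a morphism $_{(1,1)}(\Z\oplus\Z)\to {}_{(1,0)}(\Z\oplus\Z)$, I invoke the compatibility condition of Proposition \ref{mor_el}: with $a=(1,1)$ and $b=(1,0)$ we need $3f_0+f_1(a)=b$, i.e. $3(0,-1)+f_1(1,1)=(0,-3)+(1,3)=(1,0)=b$, which holds. Since $f_1$ is bijective, $f$ is an isomorphism of elliptic groups by the isomorphism half of Proposition \ref{mor_el} (or Corollary \ref{homiso}). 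Cases ii) and iii) are literally the same computation: the formula for $f_1$ reduces mod $3^m$ (resp.\ mod $3^k$ and mod $3^m$) to a well-defined automorphism of the relevant finite group — here one uses that $2$, and hence the matrix $\left(\begin{smallmatrix}1&0\\2&1\end{smallmatrix}\right)$, is invertible modulo any power of $3$ — and the identity $3f_0+f_1(a)=b$ is an identity of integers, hence remains valid after reduction. In case iii) the hypothesis $k\ge m$ guarantees that the linear map respects the two different moduli (the ``$2x$'' term, with $x\in\Z/3^k\Z$, must land in $\Z/3^m\Z$, which is automatic since $3^k\Z\subseteq 3^m\Z$ forces reduction to be well defined), so the map $\Z/3^k\Z\oplus\Z/3^m\Z\to\Z/3^k\Z\oplus\Z/3^m\Z$ is genuinely defined and bijective.

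The only genuinely delicate point — and the one I would be most careful about — is the well-definedness and bijectivity of $f_1$ in case iii): one must check that $(x,y)\mapsto(x,2x+y)$ descends to a map $\Z/3^k\Z\oplus\Z/3^m\Z\to\Z/3^k\Z\oplus\Z/3^m\Z$ when $k\ge m$, i.e.\ that changing $x$ by a multiple of $3^k$ changes $2x$ by a multiple of $3^m$ (true because $3^k\mid 3^m\cdot(\text{integer})$ when $k\ge m$), and conversely that the would-be inverse $(u,v)\mapsto(u,v-2u)$ is equally well defined, which needs the same inequality. Once this bookkeeping is in place, the argument is uniform across the three parts and each amounts to the one-line identity $3(0,-1)+(1,3)=(1,0)$ interpreted in the appropriate quotient, so I would present case i) in detail and then remark that ii) and iii) follow by reduction modulo the relevant powers of $3$.
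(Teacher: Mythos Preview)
Your proposal is correct and is precisely the direct verification the paper has in mind: the paper itself omits the proof entirely, saying only that ``the proofs can be done directly by hand.'' Your decomposition $f=f_0+f_1$ with $f_0=(0,-1)$ and $f_1(x,y)=(x,2x+y)$, together with the check $3f_0+f_1(1,1)=(1,0)$ via Proposition~\ref{mor_el}, and the observation that the well-definedness of $f_1$ on $\Z/3^k\Z\oplus\Z/3^m\Z$ requires exactly $k\ge m$, supplies the missing details cleanly.
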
 

The proofs can be done directly by hand and we omit them.

\begin{Co} Any finitely generated elliptic group is of one of the following forms:
$$_0A, \, _1\Z\times \, _0A \ \textnormal{or } \ _1\Z/3^k\Z \times \, _0A,$$ where $A$ is a finitely generated abelian group.
\end{Co}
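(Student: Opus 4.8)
The plan is to combine Theorem \ref{55} with the rewriting moves of Lemma \ref{66}. By Theorem \ref{55}, any finitely generated elliptic group $S$ is isomorphic to a finite product of the indecomposables on the list
$$_0\Z,\ _1\Z,\ _0\Z/p^k\Z,\ _0\Z/3^k\Z,\ _1\Z/3^k\Z,$$
with $p\neq 3$ prime and $k\geq 1$. Split this product into two parts: the factors of ''flex type'' (those of the form $_0B$ with $B$ indecomposable abelian) and the factors of ''non-flex type'' (those of the form $_1\Z$ or $_1\Z/3^k\Z$). By Lemma \ref{51}, a product of factors $_0B_i$ is isomorphic to $_0(\bigoplus_i B_i) = \, _0A$ for a finitely generated abelian group $A$, so the flex part is already in the desired form. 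The content of the Corollary is that the non-flex part can always be reduced to \emph{at most one} non-flex factor, at the cost of enlarging the abelian group $A$ occurring in the flex part.

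First I would dispose of the case where $S$ has no non-flex factor: then $S\cong\,_0A$ directly. Otherwise, suppose $S$ has $r\geq 1$ non-flex factors. The key step is to show, by induction on $r$, that $r$ of them can be merged down to one. For the inductive step, take the first two non-flex factors $F_1,F_2$; each is $_1\Z$ or $_1\Z/3^m\Z$. Applying Lemma \ref{66} (parts i, ii, iii — and, in the case $_1\Z/3^k\Z\times\,_1\Z/3^m\Z$, first swapping the two factors if necessary so that the exponent $k$ of the first is $\geq$ the exponent $m$ of the second), we get $F_1\times F_2\cong G\times\,_0B$, where $G$ is again of non-flex type ($_1\Z$ or $_1\Z/3^k\Z$) and $_0B$ is of flex type. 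The new $_0B$ factor gets absorbed into the flex part via Lemma \ref{51}, and we are left with $r-1$ non-flex factors. Iterating, $S\cong G\times\,_0A'$ with a single non-flex factor $G\in\{\,_1\Z,\,_1\Z/3^k\Z\}$ and $A'$ finitely generated abelian, which is precisely one of the three stated forms.

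The only genuine point to check is that Lemma \ref{66} really does cover every pairing of two non-flex indecomposables: $_1\Z$ with $_1\Z$ (part i), $_1\Z$ with $_1\Z/3^m\Z$ (part ii), and $_1\Z/3^k\Z$ with $_1\Z/3^m\Z$ with $k\geq m$ (part iii) — the last case handled after possibly exchanging the two factors, which is harmless since the product is commutative up to isomorphism. There is no subtlety in the absorption of flex factors, since Lemma \ref{51} is an equality of elliptic-group structures on the same underlying set, and it extends from two summands to any finite number by an obvious induction. So the main (and only mild) obstacle is simply the bookkeeping of which variant of Lemma \ref{66} applies and arranging the exponent inequality; everything else is formal.
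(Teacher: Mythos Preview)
Your proposal is correct and follows essentially the same route as the paper's proof: decompose via Theorem~\ref{55}, then use Lemma~\ref{66} to merge pairs of non-flex factors down to at most one, absorbing the resulting $_0B$ pieces into the flex part via Lemma~\ref{51}. Your write-up is simply a more careful and explicit version of the paper's two-line argument, in particular making visible the swap needed for part~(iii) of Lemma~\ref{66} and the induction on the number of non-flex factors.
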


\begin{proof} According to Theorem \ref{55}, any such object is a product of the above listed elliptic groups. Moreover, two copies of elliptic groups of the form $_1Z$, with $Z=\Z$ or $Z=\Z/3^k\Z$, can be replaced by a single one, according to Lemma \ref{66} and we are done.
\end{proof}

As we have seen, any elliptic group is embeddable into an elliptic group with a flex. We aim to prove a more precise statement for finitely generated elliptic groups.

\begin{Le} For any finitely generated elliptic group $S$, there is an injective morphism of elliptic groups $\kappa: S\to T$, where $T$ has a flex point and is also finitely generated.
\end{Le}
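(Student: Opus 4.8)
The plan is to reduce to the case $S={}_cC$ for a finitely generated abelian group $C$, and then construct an explicit embedding of $C$ into a finitely generated abelian group $B$ in which the element $c$ becomes divisible by $3$; Lemma \ref{ab-el}(ii) and Lemma \ref{62} then do the rest. First I would dispose of the empty elliptic group by sending it (vacuously, injectively) to the terminal object $\mathbf 0$, which has a flex point and is finitely generated. So assume $S$ is nonempty. By Theorem \ref{2.3} we may write $S={}_cC$, where $C=(S,+_o)$ is the associated abelian group and $c=o\ast o$; and $C$ is finitely generated by the lemma asserting that a finitely generated elliptic group has finitely generated associated group. Thus it suffices to produce an \emph{injective} group homomorphism $f:C\to B$ into a \emph{finitely generated} abelian group $B$ with $f(c)\in 3B$. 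Indeed, $f$ is then automatically an injective morphism of elliptic groups ${}_cC\to{}_{f(c)}B$ by Lemma \ref{ab-el}(ii), the elliptic group ${}_{f(c)}B$ has a flex point by Lemma \ref{62}, and ${}_{f(c)}B$ is finitely generated by the same lemma (in the direction: finitely generated abelian group gives finitely generated elliptic group).

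For the construction, I would set
$$B=\frac{C\oplus\Z}{\langle(c,-3)\rangle},$$
the quotient of $C\oplus\Z$ by the cyclic subgroup generated by $(c,-3)$, and let $f$ be the composite $C\hookrightarrow C\oplus\Z\twoheadrightarrow B$. Then $B$ is finitely generated, being a quotient of $C\oplus\Z$. The map $f$ is injective: if $x\in C$ and $(x,0)=m(c,-3)$ for some $m\in\Z$, then $-3m=0$, hence $m=0$ and $x=0$. Finally, writing $e\in B$ for the image of $(0,1)$, we have $f(c)=\overline{(c,0)}=\overline{(0,3)}=3e\in 3B$. Taking $T={}_{f(c)}B$ and $\kappa=f$ completes the argument.

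The only real content is the observation that full $3$-divisibility of the target --- which cannot be achieved inside the finitely generated world as soon as a factor ${}_1\Z$ or ${}_1\Z/3^k\Z$ is present, unlike in the proof of the unconstrained embedding lemma --- is unnecessary: one only needs the single element $c$ to land in $3B$, and the ``pushout-type'' quotient above does exactly this while keeping $B$ finitely generated and $f$ injective. Hence I expect the main (very mild) obstacle to be simply spotting this construction; everything else is a direct appeal to Lemma \ref{ab-el}(ii) and Lemma \ref{62}. As an alternative one could argue factorwise using the Corollary preceding this Lemma, embedding ${}_1\Z\hookrightarrow{}_3\Z$ via $n\mapsto 3n$ and ${}_1\Z/3^k\Z\hookrightarrow{}_3(\Z/3^{k+1}\Z)$ via $x\mapsto 3x$ and leaving the ${}_0A$ factor untouched, but the uniform construction above is shorter and avoids invoking the classification.
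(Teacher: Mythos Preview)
Your proof is correct and takes a genuinely different route from the paper's. The paper invokes the classification (Theorem~\ref{55}, in the form of the Corollary preceding this Lemma) to reduce to the two cases $S={}_1\Z$ and $S={}_1\Z/3^k\Z$, and then writes down the explicit embeddings $\kappa(n)=1-3n$ into ${}_0\Z$ and ${}_0\Z/3^{k+1}\Z$ respectively---essentially the ``factorwise'' alternative you yourself sketch at the end. Your main argument instead works uniformly: from $S={}_cC$ you form the pushout $B=(C\oplus\Z)/\langle(c,-3)\rangle$ to force $c$ to become $3$-divisible without losing injectivity or finite generation, and then appeal to Lemma~\ref{ab-el}(ii) and Lemma~\ref{62}. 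This is cleaner and classification-free; the paper's approach, by contrast, is more concrete in that it names the minimal target in each case. Both are short, and your observation that one only needs $c\in 3B$ rather than full $3$-divisibility of $B$ is exactly the point.
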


\begin{proof} Since $S$ is finitely generated, we can use Theorem \ref{55} to deduce the cases $S=\, _1\Z$ and $S=\, _1\Z/3^k\Z$, $k\geq 1$. In the first one, we have an injective morphism of elliptic groups $\kappa:\, _1\Z\to \, _0\Z$, where $\kappa(n)=1-3n$. In the second case, we take $\kappa:\, _1\Z/3^k\Z\to \, _0\Z/3^{k+1}\Z$, where $\kappa(x)=1-3x$.
 \end{proof}

\section{Coproduct}\label{sec10}

The goal of this section is to study the coproduct in the category of elliptic groups. In particular, our results give very explicit descriptions of the coproduct of finitely generated elliptic groups.

As usual, $\oplus$ denotes the direct sum of abelian groups, which is simultaneously the product and the coproduct in the category of abelian groups. For coproducts in the category of elliptic groups, we will use the symbol $\coprod$, while $\times$ will denote the product of elliptic groups. We further let $[a]_n$ (or simply $[a]$) be the class of an integer $a$ in $\Z/n\Z$.

As a reminder, a morphism $f:\, _aA\to \, _cC$ of elliptic groups is a map for which the map
$f_1:A\to C$, given by $f_1(x)=f(x)-f(0)$, is a group homomorphism and the following compatibility condition holds:
$$3f(0)+f_1(a)=c.$$
The last condition is equivalent to $2f(0)+f(a)=c$. 

\begin{Pro} Let $A$ and $B$ be abelian groups. There is an isomorphism of elliptic groups:
$$_0A\coprod \, _0B \cong \, _0(A\oplus B\oplus \Z/3\Z).$$	
More precisely, the diagram of elliptic groups
$$i:\, _0A \to \ _{0} (A\oplus B\oplus \Z/3\Z) \leftarrow \ _0B\, :j,$$
given by $i(x)=(x,0,0)$, $j(y)=(0,y,[1])$, is a coproduct diagram of elliptic groups. That is, for any elliptic group $E$ and for any morphisms of elliptic groups $f:\, _0A\to E$, $g:\, _0B\to E$, there exists a unique morphism of elliptic groups
$$h:\, _0(A\oplus B\oplus \Z/3\Z)\to E,$$
such that $f=h\circ i$ and $g=h\circ j$.
\end{Pro}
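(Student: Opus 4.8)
The plan is to verify the stated universal property of the coproduct directly, by unwinding what morphisms into an arbitrary elliptic group look like. If $E$ is empty there is nothing to prove, since $_0A$ is nonempty and hence $\mor(_0A,E)=\emptyset$; so by Theorem \ref{2.3} I may assume $E=\,_cC$ for an abelian group $C$ and some $c\in C$. By Proposition \ref{mor_el}, each of $f,g$ and any candidate $h$ is an affine map of the underlying abelian groups; write $f(x)=f_0+f_1(x)$, $g(y)=g_0+g_1(y)$ and $h(z)=h_0+h_1(z)$ with $f_1,g_1,h_1$ group homomorphisms. Since the distinguished element is $0$ in each of the source elliptic groups $_0A$, $_0B$ and $_0(A\oplus B\oplus\Z/3\Z)$, the compatibility condition (\ref{3el}) collapses in all three cases to $3f_0=c$, $3g_0=c$ and $3h_0=c$. (The same observation shows that $i$ and $j$ are indeed morphisms: both are affine, $i$ has constant part $0$ with $3\cdot 0=0$, and $j$ has constant part $(0,0,[1])$ with $3(0,0,[1])=(0,0,[0])$.)

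Next I would parametrise $h$. A group homomorphism $h_1\colon A\oplus B\oplus\Z/3\Z\to C$ is the same datum as a triple $(\alpha,\beta,t)$ with $\alpha\in\Hom(A,C)$, $\beta\in\Hom(B,C)$ and $t\in C$ satisfying $3t=0$ (put $\alpha=h_1|_A$, $\beta=h_1|_B$, $t=h_1(0,0,[1])$), so that $h(a,b,[n])=h_0+\alpha(a)+\beta(b)+nt$. Now translate the two required equations. Evaluating $h\circ i$ gives $h(x,0,0)=h_0+\alpha(x)$, which must equal $f(x)=f_0+f_1(x)$, forcing $h_0=f_0$ and $\alpha=f_1$. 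Evaluating $h\circ j$ gives $h(0,y,[1])=h_0+\beta(y)+t$, which must equal $g(y)=g_0+g_1(y)$, forcing $\beta=g_1$ and $h_0+t=g_0$, i.e. $t=g_0-f_0$. This pins down $h$ completely, giving uniqueness.

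For existence I would then check that the prescription $h_0:=f_0$, $\alpha:=f_1$, $\beta:=g_1$, $t:=g_0-f_0$ does define a morphism of elliptic groups. The only things to verify are that $t$ is $3$-torsion, so that $h_1$ is a well-defined homomorphism out of $A\oplus B\oplus\Z/3\Z$, and that the compatibility condition holds; both are immediate from $3f_0=3g_0=c$, since $3t=3g_0-3f_0=c-c=0$ and $3h_0=3f_0=c$. A direct substitution into the formula for $h$ then confirms $h\circ i=f$ and $h\circ j=g$, completing the verification of the universal property; combined with the evident fact that $i,j$ are the coproduct injections, this gives the isomorphism $_0A\coprod{}_0B\cong\,_0(A\oplus B\oplus\Z/3\Z)$.

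I do not expect a genuine obstacle; the only conceptual point worth isolating is why the extra summand $\Z/3\Z$ is forced. A coproduct must remember the constant parts $f_0$ and $g_0$ of $f$ and $g$ independently, but each is constrained by $3f_0=3g_0=c$, so the only honest new parameter is their difference $g_0-f_0$, which is automatically annihilated by $3$ — exactly the group $\Z/3\Z$ can record. This also explains the normalisation $j(y)=(0,y,[1])$ rather than $(0,y,[0])$: the class $[1]$ in the third coordinate is precisely the bookkeeping device that lets $h$ absorb the shift $g_0-f_0$.
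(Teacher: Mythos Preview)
Your proof is correct and follows essentially the same approach as the paper: both reduce to $E={}_cC$, use the affine description of morphisms from Proposition~\ref{mor_el}, and exploit the constraints $3f_0=3g_0=c$ to pin down $h$. Unwinding your formula $h(a,b,[n])=f_0+f_1(a)+g_1(b)+n(g_0-f_0)$ gives exactly the paper's $h(x,y,[n])=f(x)-nf(0)+g(y)+(n-1)g(0)$; the only stylistic difference is that you derive $h$ by solving for the parameters $(h_0,\alpha,\beta,t)$, whereas the paper writes $h$ down and then verifies, handling uniqueness by showing the difference $q=h-p$ vanishes on generators.
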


\begin{proof} Since $A\mapsto \, _0A$ is a functor $\bf Ab \to Ell_{Gr}$, the map $i$ is a morphism of elliptic groups. Since $[1]$ is of order $3$, one sees that the compatibility condition holds for $j$ and thus, $j$ is also a morphism. Take $E= \, _cC$ for an abelian group $C$ and an element $c\in C$. For $f$ and $g$ the compatibility conditions 
$$3f(0)=c=3g(0)$$
holds. Define the map $\tilde{h}: A\oplus B\oplus \Z\to C$ by
$$\tilde{h}(x,y,[n])= f(x)-nf(0)+g(y)+(n-1)g(0).$$
Since
$$\tilde{h}(x,y,[n+3])-\tilde{h}(x,y,[n])=-3f(0)+3g(0)=-c+c=0,$$
we see that $\tilde{h}$ yields a well-defined map
$$h: A\oplus B\oplus \Z/3\Z\to C.$$
Next,
$$3h(0,0,[0])=3(f(0)+g(0)-g(0))=3f(0)=c.$$
So, $h$ is a morphism of elliptic groups $\, _0(A\oplus B\oplus \Z/3\Z)\to \, _cC$. We have
$$h\circ i(x)=h(x,0,[0])=f(x)+g(0)-g(0)=f(x)$$
and
$$h\circ j(y)=h(0,y,[1])= f(0)-f(0)+g(y)=g(y).$$
We have checked all the conditions except for the uniqueness.

To this end, assume $p:, _0(A\oplus B\oplus \Z/3\Z)\to \, _cC$ is also a morphism, such that $p\circ i=f$ and d$p\circ j=g$. We have
$$p(0,0,0)=pi(0)=f(0).$$
So, $h$ and $p$ have the same constant terms. Hence $q=h-p$ is a group homomorphism.
Clearly $q\circ i=0$. So $q(x,0,0)=0$ for all $x$.
We also have
$$q\circ j(y)=h\circ j(y)-p\circ j(y)= g(y)-g(y)=0.$$
It follows that
$q(0,y,1)=0$ for all $y$. First, by putting $y=0$, we obtain that $q(0,0,[1])=0$ and hence,
$q(0,0,[n])=0$ for all $n$. Using the fact that $q$ is a group homomorphism, we obtain
$$q(0,y,0)=q((0,y,1)-(0,0,1))=q(0,y,[1])-q(0,0,[1])=0.$$
Thus, the restriction of the group homomorphism $q$ on each summand is zero and therefore $q$ itself is zero. This finishes the proof.
\end{proof}

\begin{Pro} Let $A$ and $B$ be abelian groups and $a$ an element of $A$. One has an isomorphism of elliptic groups:
$$(_1\Z\times \, _0B)\coprod\ _aA\cong \, 	_{(0,0,a)} (\Z\oplus B\oplus A).$$
More precisely, the diagram of elliptic groups
$$i:\, _1\Z \times \, _0B\to \ _{(0,0,a)} (\Z\oplus B\oplus A) \leftarrow \ _aA\, :j,$$
given by
$i(n,b)=(1-3n,b,na)$ and $j(x)=(0,0,x)$, is a coproduct diagram of elliptic groups. That is to say, for any elliptic group $E$ and any morphisms $f:\, _1\Z\times \, _0B\to E$ and $g: \, _aA\to E$, there exists a unique morphism $h:\, _{(0,0,a)} (\Z\oplus B\oplus A)\to E$, such that $f=h\circ i$ and $g=h\circ j$. 
\end{Pro}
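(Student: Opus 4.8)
The plan is to verify directly that $i$ and $j$ are morphisms of elliptic groups and then establish the universal property by an argument parallel to the previous proposition. First I would check that $i$ is a morphism: since $\,_1\Z\times\,_0B=\,_{(1,0)}(\Z\oplus B)$ by Lemma~\ref{51}, it suffices to see that $i$ is affine with linear part $(n,b)\mapsto(-3n,b,na)$ and constant part $(1,0,0)$, and then confirm the compatibility condition $2i(0,0)+i(1,0)=(0,0,a)$; here $i(0,0)=(1,0,0)$ and $i(1,0)=(-2,0,a)$, so $2(1,0,0)+(-2,0,a)=(0,0,a)$ as required. For $j$ the linear part is the inclusion $A\hookrightarrow \Z\oplus B\oplus A$ and the constant part is $0$, so the condition reads $0+j(a)=(0,0,a)$, which holds.

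For the universal property, take $E=\,_cC$ and morphisms $f:\,_1\Z\times\,_0B\to\,_cC$, $g:\,_aA\to\,_cC$. Writing $f=f_0+f_1$ and $g=g_0+g_1$ in affine form, the compatibility conditions give $2f_0+f_1(1,0)=c$ and $2g_0+g_1(a)=c$. The construction of $h$ is forced on generators: we must have $h(1-3n,b,na)=f(n,b)$ and $h(0,0,x)=g(x)$. In particular $h(1,0,0)=f(0,0)=f_0$, $h(0,0,x)=g_0+g_1(x)$, and using $h(-3,0,a)=h(1-3\cdot1,0,1\cdot a)=f(1,0)=f_0+f_1(1,0)$ together with $h(0,0,a)=g_0+g_1(a)$, one solves for $h$ on the standard generators $e_1=(1,0,0)$, the $B$-summand, and the $A$-summand. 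Concretely I would define the affine map $\tilde h:\Z\oplus B\oplus A\to C$ by
$$\tilde h(m,b,x)=f_0+\tfrac{1-m}{?}\cdots$$
— rather than divide, set $h_0=f_0$ and define the linear part $h_1$ on generators by $h_1(e_1^{\Z})=\tfrac13(f_1(1,0)-\ldots)$; since division by $3$ is not available I instead use the relation $m e_1^{\Z}=$ (contribution determined via $f$ and $g$): explicitly, for $m=1-3n+3n$ one writes $(m,b,x)=(1-3n,b,na)+(3n-1+m,\,b-b,\,x-na)$ suitably, reducing to the two pieces on which $h$ is prescribed. The cleanest route is: define $h(1-3n,b,x):=f(n,b)+g_1(x-na)$ for all $n\in\Z,b\in B,x\in A$, check this is well-defined and independent of the representative $n$ (two representatives differ by the relation coming from $f_1(3,0)$ versus $3a$, which match because both equal the appropriate multiple via the compatibility conditions), verify it is affine with $3h(0,0,0)=c$, and confirm $h\circ i=f$, $h\circ j=g$.

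For uniqueness, suppose $p$ is another such morphism. Then $p(1,0,0)=pi(0,0)=f_0=h(1,0,0)$, so $q=h-p$ is a group homomorphism. From $q\circ i=0$ we get $q(1-3n,b,na)=0$ for all $n,b$; taking $n=0$ gives $q(0,b,0)=0$ for all $b\in B$, and taking $b=0$ gives $q(1-3n,0,na)=0$. From $q\circ j=0$ we get $q(0,0,x)=0$ for all $x\in A$. Combining $q(1-3n,0,na)=0$ with $q(0,0,na)=0$ and linearity yields $q(1-3n,0,0)=0$, hence $q(1,0,0)=0$ and $q(3,0,0)=0$; but $q(1,0,0)=0$ already forces $q$ to vanish on the $\Z$-summand, and we have seen it vanishes on the $B$- and $A$-summands, so $q=0$.

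The main obstacle is the well-definedness and affineness of $h$: because $3$ is not invertible on $\Z$, one cannot simply ``solve'' the system on generators by dividing, and one must check carefully that the prescription $h(1-3n,b,x):=f(n,b)+g_1(x-na)$ does not depend on which $n$ represents a given first coordinate — i.e. that the two compatibility conditions $2f_0+f_1(1,0)=c=2g_0+g_1(a)$ are exactly what is needed to make the gluing consistent. Once that is in hand, the remaining verifications ($h$ affine, $3h(0,0,0)=c$, $h\circ i=f$, $h\circ j=g$, uniqueness) are routine bookkeeping of the kind carried out in the previous proposition.
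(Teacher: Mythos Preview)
Your verification that $i$ and $j$ are morphisms is fine, and your uniqueness argument is essentially correct (one small slip: to conclude that $q=h-p$ is a group homomorphism you need $h(0,0,0)=p(0,0,0)$, which follows from $h\circ j=g=p\circ j$ evaluated at $0$, not from the equality at $(1,0,0)$).

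The genuine gap is in the existence step. Your prescription
\[
h(1-3n,\,b,\,x):=f(n,b)+g_1(x-na)
\]
does \emph{not} define $h$ on all of $\Z\oplus B\oplus A$: the map $n\mapsto 1-3n$ is an injection from $\Z$ onto the coset $1+3\Z$, so you have only specified $h$ on triples whose first coordinate is $\equiv 1\pmod 3$. In particular $h(0,0,0)$, which you then try to evaluate, is not given by this formula. Consequently there is no ``well-definedness'' issue to check --- each admissible first coordinate has a \emph{unique} representative $n$ --- but there is a coverage issue: two thirds of the domain is left undefined. Your earlier, abandoned idea of decomposing a general $(m,b,x)$ into pieces where $h$ is prescribed is closer to what is needed, but you never carry it through.

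The paper's route sidesteps this entirely. Since any morphism out of $\,_{(0,0,a)}(\Z\oplus B\oplus A)$ is affine, one writes
\[
h(n,b,x)=u+nv+\alpha(x)+\beta(b)
\]
with $u,v\in C$ and $\alpha:A\to C$, $\beta:B\to C$ homomorphisms, and then solves for these four unknowns from $h\circ j=g$ and $h\circ i=f$: one finds $u=g(0)$, $\alpha(x)=g(x)-g(0)$, $v=f(0,0)-g(0)$, $\beta(b)=f(0,b)-f(0,0)$. No division by $3$ is ever required; the compatibility condition for $f$ (which, incidentally, should read $3f_0+f_1(1,0)=c$, not $2f_0+\cdots$) is used only in the final verification that $h\circ i=f$. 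This is exactly the ``solve on generators'' idea you were reaching for, but carried out in the affine parametrisation where the $\Z$-summand is generated by $e_1$ rather than by $1-3n$.
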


\begin{proof} We first have to show that $i$ and $j$ define morphisms of elliptic groups. Since they obviously are affine maps, we need to check that they satisfy the compatibility condition. We have $2i(0,0)+ i(1,0)=2(1,0,0)+(-2,0,a)=(0,0,a)$. Thus, $i$ is a morphism. We also have $2j(0)+j(a)=(0,0,a)$ and hence, $j$ is also a morphism.

Without loss of generality, we can assume that $E=\, _cC$. Our next aim is to show that if $f$ and $g$ are morphisms, there exist a unique morphism $h$ with the above described properties.

For uniqueness, observe that any affine map $h:\Z\oplus B\oplus A\to C$ has the form
$$h(n,b,x)= u+nv+\alpha(x)+\beta(b),$$
where $(n,b,x)$ is any element from $\Z\oplus B\oplus A$ and $u,v\in C$ and $\alpha:A\to C$, $\beta:B\to C$ are group homomorphisms. The condition $h\circ j=g$ gives us $g(x)=h(0,0,x)=u+\alpha(x)$ for all $x\in A$. This implies that $u=g(0)$ and $\alpha(x)=g(x)-g(0)$. Thus, $u$ and $\alpha$ are uniquely defined. The condition $f=h\circ i$ gives
$$f(n,b)=h(1-3n,b,na)=u+(1-3n)v+\alpha(na)+\beta(b).$$ 
Putting $n=0, b=0$, we obtain $f(0,0)=u+v$. Thus, $v=f(0,0)-u=f(0,0)-g(0)$ is well-defined. We put $n=0$ to obtain $f(0,b)=u+v+\beta(b)$. It follows that $\beta(b)=f(0,b)-f(0,0)$ and hence, $\beta$ is also uniquely defined. 

For existence, we put
$$h(n,b,x)=g(0)+n(f(0,0)-g(0))+g(x)-g(0)+f(0,b)-f(0,0)$$
as the uniqueness proof suggests. By definition, $h$ is affine. We have
$$2h(0,0,0)+h(0,0,a)=2g(0)+g(0)+g(a)-g(0)=2g(0)+g(a)=c,$$
since $g$ is a morphism. It follows that $h$ is a morphism
$$h:\, _{(0,0,a)} (\Z\oplus B\oplus A)\to _cC.$$
Let us compare $h\circ j$ and $g$. We have
$$h\circ j(x)=h(0,0,x)=g(0)+g(0)-g(0)=g(x),$$
as required. We write
\begin{eqnarray*} h\circ i(n,b)&=&h(1-3n,b,na)\\
												  &=&g(0)+(1-3n)(f(0,0)-g(0))+g(na)-g(0)+f(0,b)-f(0,0)\\
												  &=& f(0,0)-3nf(0,0)+3ng(0)+n\alpha(a)+\beta(b)\\
												  &=&f(0,0)-3nf(0,0)+ nc +\beta(b).
\end{eqnarray*}
It will be helpful to write $f(n,b)=f(0,0)+nw+\beta(b)$, where $w\in C$ and $\beta:B\to C$ is a group homomorphism. The compatibility condition for $f$ gives $c=3f(0,0)+w$. Hence, we have
$$h\circ i(n,b)=f(0,0)+nw+\beta(b)=f(n,b)$$
and the proof is finished.
\end{proof}

\begin{Pro} Let $A$ and $B$ be abelian groups. There is an isomorphism of elliptic groups
$$(_1 \Z/3^k\Z \times \, _0B) \coprod \, _0A \cong \, _0(\Z/3^{k+1}\Z \oplus B \oplus A).$$	
More precisely, the diagram 
$$i:\, _1 \Z/3^k\Z \times \, _0B \to \ _0(\Z/3^{k+1}\Z\oplus B \oplus A) \leftarrow \ _0A\,:j,$$	
given by $i([a]_{3^k})=([1-3a]_{3^{k+1}},0)$, $j(x)=(0,0,x)$, is a coproduct diagram of elliptic groups. That is, for any elliptic group $E$ and for any morphisms of elliptic groups $f:\, _1 \Z/3^k\Z\times \, _0B\to E$, $g:\, _0A\to E$, there exists a unique morphism of elliptic groups
$$h:\, _0(\Z/3^{k+1}\Z \oplus A) \to E,$$
such that $f=h\circ i$ and $g=h\circ j$.
\end{Pro}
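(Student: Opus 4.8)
The proof will run along the same lines as the two preceding propositions. Throughout I use Lemma~\ref{51} to identify $\, _1\Z/3^k\Z\times\, _0B$ with $\, _{([1]_{3^k},0)}(\Z/3^k\Z\oplus B)$, and I read $i$ as the map $i([a]_{3^k},b)=([1-3a]_{3^{k+1}},b,0)$. First I would check that $i$ and $j$ are morphisms of elliptic groups. Both are visibly affine; $i$ is moreover well defined on $\Z/3^k\Z$ because $a\equiv a'\pmod{3^k}$ forces $1-3a\equiv 1-3a'\pmod{3^{k+1}}$. It then remains to verify the compatibility condition in the form $2h(0)+h(\text{base point})=\text{base point}$: for $i$ one computes $2i(0,0)+i([1]_{3^k},0)=2([1]_{3^{k+1}},0,0)+([-2]_{3^{k+1}},0,0)=(0,0,0)$, and for $j$ it is immediate since $j$ is linear and carries the base point $0$ to $(0,0,0)$.

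Next, fix $E=\,_cC$ and morphisms $f:\, _1\Z/3^k\Z\times\, _0B\to\,_cC$, $g:\,_0A\to\,_cC$. Write $f([a]_{3^k},b)=f(0,0)+aw+\beta(b)$ with $w\in C$, $3^kw=0$, $\beta\in\Hom(B,C)$, so that the compatibility condition for $f$ reads $3f(0,0)+w=c$; and $g(x)=g(0)+\alpha(x)$ with $\alpha\in\Hom(A,C)$ and $3g(0)=c$. For uniqueness, note that any affine map $h:\Z/3^{k+1}\Z\oplus B\oplus A\to C$ has the form $h([n]_{3^{k+1}},b,x)=u+nv+\beta'(b)+\alpha'(x)$. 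Imposing $h\circ j=g$ forces $u=g(0)$ and $\alpha'=\alpha$; imposing $h\circ i=f$ and specialising successively to $([0]_{3^k},0)$, $([0]_{3^k},b)$ and $([a]_{3^k},b)$ forces $v=f(0,0)-g(0)$, $\beta'=\beta$, and the residual identity $aw=-3av$ for all $a$, which holds because $w+3v=c-3g(0)=0$. Hence $h$ is uniquely determined.

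For existence I would take exactly the map that the uniqueness argument dictates,
$$h([n]_{3^{k+1}},b,x)=g(0)+n\bigl(f(0,0)-g(0)\bigr)+\bigl(f(0,b)-f(0,0)\bigr)+\bigl(g(x)-g(0)\bigr),$$
first defined on $\Z\oplus B\oplus A$ and then shown to descend to the quotient by $3^{k+1}\Z$ in the first variable. This descent is the one genuinely non-formal point, and I expect it to be the main obstacle: it amounts to $3^{k+1}\bigl(f(0,0)-g(0)\bigr)=0$, which I would deduce by combining the two compatibility conditions — from $3^kw=0$ and $w=c-3f(0,0)$ one gets $3^{k+1}f(0,0)=3^kc$, while $3g(0)=c$ gives $3^{k+1}g(0)=3^kc$, and subtracting yields the claim. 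Once $h$ is well defined it is manifestly affine with constant part $g(0)$, and $3h(0,0,0)=3g(0)=c$ shows it is a morphism $\,_0(\Z/3^{k+1}\Z\oplus B\oplus A)\to\,_cC$. The relations $h\circ j=g$ and $h\circ i=f$ are then one-line substitutions (for the latter using $w=-3v$ once more). This last step is precisely where enlarging the $3^k$-torsion factor to the next power $3^{k+1}$ is essential.
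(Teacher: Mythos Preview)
Your proposal is correct and is precisely the argument the paper intends: the paper omits this proof with the remark that it ``is quite similar to the previous case,'' and your write-up follows the template of the two preceding propositions faithfully, including the one new ingredient (the descent to $\Z/3^{k+1}\Z$ via $3^{k+1}\bigl(f(0,0)-g(0)\bigr)=0$). Your reading of $i$ as $i([a]_{3^k},b)=([1-3a]_{3^{k+1}},b,0)$ and of the codomain of $h$ as $\,_0(\Z/3^{k+1}\Z\oplus B\oplus A)$ also silently repairs two evident typos in the statement.
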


The proof of is quite similar to the previous case and we omit it.

\begin{Pro} Let $A$ and $B$ be abelian groups and $\ell\leq k$ be natural numbers. We have an isomorphism of elliptic groups
$$(_1 \Z/3^k\Z \, \times \, _0A)\coprod \, (_1 \Z/3^\ell\Z \, \times \, _0B)\cong \, _{(0,1,0,0)}(\Z/3^{k+1}\Z \oplus \Z/3^{\ell}\Z \oplus A\oplus B ).$$
More precisely, the diagram 
$$i:\, _1 \Z/3^k\Z \, \times \, _0A \to \ _{(0,1,0,0)}(\Z/3^{k+1}\Z \oplus \Z/3^{\ell}\Z \oplus A\oplus B ) \leftarrow \ _1 \Z/3^\ell\Z \, \times \, _0B \, :j,$$
given by $i([n]_{3^k},a)=([1-3n]_{3^{k+1}},[n]_{3^\ell},a,0)$, $j([m]_{3^\ell},b)=(0,[m]_{3^\ell},0,b)$ is a coproduct diagram of elliptic groups. This in term means that for any elliptic group $E$ and for any morphisms of elliptic groups $f:\,_1 \Z/3^k\Z \, \times \, _0A\to E$, $g:\, _1 \Z/3^\ell\Z \, \times \, _0B \to E$, there exists a unique morphism of elliptic groups
$$h:\, 	_{(0,1,0,0)}(\Z/3^{k+1}\Z \oplus \Z/3^{\ell}\Z\oplus A\oplus B) \ \to E,$$
such that $f=h\circ i$ and $g=h\circ j$.
\end{Pro}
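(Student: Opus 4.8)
The plan is to follow the same three-step pattern used in the preceding propositions of this section: verify that $i$ and $j$ are well-defined morphisms of elliptic groups, construct the mediating morphism $h$ explicitly, and check that it is unique. Throughout, I would use Lemma \ref{51} to identify ${}_1\Z/3^k\Z \times {}_0A$ with ${}_{(1,0)}(\Z/3^k\Z \oplus A)$ and ${}_1\Z/3^\ell\Z \times {}_0B$ with ${}_{(1,0)}(\Z/3^\ell\Z \oplus B)$, so that a morphism out of either product is precisely an affine map of the underlying abelian groups satisfying the compatibility condition (\ref{3el}).

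First I would check that $i$ is well-defined: replacing $n$ by $n+3^k$ changes $1-3n$ by $3^{k+1}$, hence leaves $[1-3n]_{3^{k+1}}$ unchanged, and leaves $[n]_{3^\ell}$ unchanged because $3^\ell \mid 3^k$ when $\ell \le k$. This is the first place the hypothesis $\ell \le k$ is used. Both $i$ and $j$ are visibly affine maps, so it only remains to check the compatibility condition, which in the form $2h(0)+h(e)=c$ (with $e$ the base point of the source) reads $2i(0,0)+i(1,0)=2([1]_{3^{k+1}},[0]_{3^\ell},0,0)+([-2]_{3^{k+1}},[1]_{3^\ell},0,0)=(0,1,0,0)$ for $i$, and $2j(0,0)+j(1,0)=(0,[1]_{3^\ell},0,0)=(0,1,0,0)$ for $j$.

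Next, writing $E={}_cC$, I would use Proposition \ref{mor_el} to put $f$ and $g$ in the form $f([n]_{3^k},a)=f_0+nw+\alpha(a)$ and $g([m]_{3^\ell},b)=g_0+mv+\beta(b)$, where $\alpha\colon A\to C$ and $\beta\colon B\to C$ are group homomorphisms, $3^kw=0$, $3^\ell v=0$, and the compatibility conditions give $3f_0+w=c$ and $3g_0+v=c$. Subtracting these yields $3(f_0-g_0)=v-w$, and multiplying by $3^k$ --- using $3^kv=0$, which holds precisely because $\ell\le k$, together with $3^kw=0$ --- gives $3^{k+1}(f_0-g_0)=0$. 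I expect this torsion computation to be the main point to get right; the remainder is routine bookkeeping. One then defines
$$h([p]_{3^{k+1}},[q]_{3^\ell},a,b)=g_0+p(f_0-g_0)+qv+\alpha(a)+\beta(b),$$
which is well-defined exactly because $3^{k+1}(f_0-g_0)=0$ and $3^\ell v=0$; it is affine with constant part $g_0$ and its linear part sends the second standard generator to $v$, so its compatibility condition is $3g_0+v=c$. Substituting $i([n]_{3^k},a)=([1-3n]_{3^{k+1}},[n]_{3^\ell},a,0)$ into $h$ and simplifying with $w=c-3f_0$ and $v=c-3g_0$ gives $h\circ i=f$, while substituting $j([m]_{3^\ell},b)=(0,[m]_{3^\ell},0,b)$ gives $h\circ j=g$ at once.

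For uniqueness, if $h'$ is another mediating morphism, then $h(0)=h(j(0,0))=g(0,0)=h'(0)$, so $q:=h-h'$ is a group homomorphism on $\Z/3^{k+1}\Z\oplus\Z/3^\ell\Z\oplus A\oplus B$ vanishing on the images of $i$ and $j$. Evaluating $q$ on $i(0,0)=([1]_{3^{k+1}},0,0,0)$, on $i(0,a)-i(0,0)=(0,0,a,0)$, on $j(m,0)=(0,[m]_{3^\ell},0,0)$, and on $j(0,b)=(0,0,0,b)$ shows that $q$ annihilates each of the four standard generators, so $q=0$ and $h=h'$. This establishes the universal property and hence the displayed isomorphism; the argument is structurally the same as for the previous proposition, the only new feature being the passage from $3^\ell$-torsion to $3^{k+1}$-torsion, which is exactly where $\ell\le k$ is needed.
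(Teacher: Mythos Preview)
Your proof is correct and follows essentially the same approach as the paper: you verify $i,j$ are well-defined morphisms, write $f,g$ in affine form to extract the key torsion relation $3^{k+1}(f_0-g_0)=0$, and define $h$ by the same formula the paper uses (your $g_0,f_0-g_0,v$ are the paper's $r,p-r,c-3r$). The only cosmetic difference is that the paper proves uniqueness by solving for the coefficients of a general affine $h$, whereas you argue that the difference $h-h'$ is a group homomorphism vanishing on generators; both arguments are standard and equivalent.
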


\begin{proof} The map $j$ is a group homomorphism and $2j(0,0)+j(1,0)=(0,1,0,0)$. Hence, the compatibility condition holds and $j$ is a morphism of elliptic groups.

We next aim to show that $i$ is well-defined. This requires us to prove that $i([n]_{3^k},a)=i([n+3^k]_{3^k},a)$. This holds because
$$ ([1-3n]_{3^{k+1}},[n]_{3^\ell}, [a],0)= [1-3(n+3^k)]_{3^{k+1}},[n+3^k]_{3^\ell},a,0). $$
The condition $k\geq \ell$ is essential here. Since $i$ is well-defined, it is affine and the compatibility condition $2i(0,0)+i(1,0)=2(1,0,0,0)+(-2,1,0,0)=(0,1,0,0)$ holds. Thus, $i$ is a morphism.

Without loss of generality, we can assume that $E=\ _cC$. Our next aim is to show that if $f$ and $g$ are morphisms, there exist a unique morphism $h$ with the described properties.

For uniqueness, we observe that any affine map $h:\Z\oplus\Z \oplus A\oplus B\to C$ has the form
$$h(n,m,a,b)=u+nv+mw+\alpha(a)+\beta(b),$$
where $u,v,w\in C$ and $\alpha:A\to C$, $\beta:B\to C$ are group homomorphisms. The condition $h\circ j=g$ yields
$$u=h(0,0,0,0)=h\circ j(0,0)=g(0,0).$$
Quite similarly, we obtain
$$u+w=g(1,0), \ \ u+\beta(b)=g(0,b).$$
In the same way, the condition $h\circ i=f$ gives us 
$$h(1,0,0,0)=u+v, \ \ h(1,0,0,0)=u+v+\alpha(a).$$
As $u,v,\alpha,\beta$ are unique, $h$ too is uniquely defined.

For the existence, we first observe that since $f$ is affine, it can be written as 
$$f([n]_{3^k},a)= p+nq+\alpha(a),$$
where $p,q\in C$ and $\alpha:A\to C$ is a group homomorphism.
The compatibility condition says that
$3p+q=c$. Hence
$$f([n]_{3^k},a)= p+n(c-3p)+\alpha(a).$$
Since the LHS depends only on the residue class of $n$, the same must also be true for the RHS. Thus
$$3^{k}c=3^{k+1}p.$$
Quite similarly, for $g$ we have 
$$g([m],b)=r+m(c-3r)+\beta(b)$$
and $$ 3^{\ell}c=3^{\ell+1}r.$$
Here, $\beta:B\to C$ is a group homomorphism. Since $k\geq \ell$, we obtain
$$3^{k+1}(r-p)=0.$$
Define $h$ by 
$$h([n]_{3^{k+1}},[m]_{3^\ell},a,b)=r+n(p-r) +m(c-3r)+\alpha(a)+\beta(b).$$
The last equality implies that $h$ is a well-defined affine map
$$\Z/3^{k+1}\Z \oplus\Z/3^{\ell}\Z\oplus A\oplus B\to C.$$
We have
$$2h([0],[0],0,0)+h([0],[1],0,0)=2r+r+c-3r=c.$$
This shows that the compatibility condition also holds for $h$. We have
$$hj([m],b)=h([0],[m],0,b)=r+m(c-3r)+\beta(b)=g([m],b)$$
and
\begin{eqnarray*} hi([n],a)&=&h([1-3n],[n],a,0)\\
											 &=&r+(1-3n)(p-r)+n(c-3r)+\alpha(a)\\
											 &=&p-3np+nc+\alpha(a)\\
											 &=&p-3np+n(3p+q)+\alpha(a)\\
											 &=&p+nq+\alpha(a)=f([n],a).
\end{eqnarray*}
Thus $hi=f$ as required.
\end{proof}

\section{The group of automorphisms of an elliptic group}\label{sec11}

Let $S$ be an elliptic group. We let $\aute(S)$ be the group of automorphism of $S$ in the category ${\Ell}$. Denote by $\aut(A)$ the group of automorphisms of the group $A$. 

Recall that any morphism $F:\,_cA\to \, _cA$ is of the form $f=f_0+f_1$, where $f_0:A\to A$ is a constant map with value $f_0$ and $f_1:A\to A$ is a group homomorphism, such that
$$3f_0+f_1(c)=c.$$

\begin{Pro} Let $A$ be an abelian group and $c\in A$. For an elliptic group $S=\, _cA$, one has an exact sequence
$$0\to {\sf Ann}_3(A)\xto{\beta} \aute(\, _cA)\xto{\alpha} \aut(A)\xto{\eta} A/3A,$$
where ${\sf Ann}_3(A)=\{a\in A| 3a=0\}$ and the maps $\alpha,\beta,\eta$ are defined by
$$\beta(a)=a+1_A, \ \alpha(f_0+f_1)=f_1, \ \eta(g)=c-g(c)+3A.$$
Here, $g\in \aut(A)$, $a\in A$, $3a=0$ and $f=f_0+f_1$ is an automorphism of $_cA$.
 \end{Pro}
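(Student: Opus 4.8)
The plan is to build everything on the description of endomorphisms recalled just before the statement: a morphism $f: {}_cA\to {}_cA$ is an affine map $f=f_0+f_1$ with $f_0\in A$, $f_1\in\Hom(A,A)$, subject to the compatibility identity $3f_0+f_1(c)=c$. The first step is to identify $\aute({}_cA)$ precisely: I claim $f=f_0+f_1$ is an automorphism of the elliptic group if and only if $f_1\in\aut(A)$. Indeed, if $f$ is invertible its inverse is again a morphism $g_0+g_1$, and since composition of affine maps composes the linear parts, $f\circ f^{-1}=\id$ forces $f_1\circ g_1=1_A=g_1\circ f_1$. Conversely, if $f_1\in\aut(A)$, put $g_1=f_1^{-1}$ and $g_0=-f_1^{-1}(f_0)$; then $g=g_0+g_1$ is the set-theoretic inverse of $f$, and it satisfies the compatibility condition because $3g_0+g_1(c)=f_1^{-1}(c-3f_0)=f_1^{-1}(f_1(c))=c$. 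Hence $\aute({}_cA)=\{\,f_0+f_1\mid f_1\in\aut(A),\ 3f_0+f_1(c)=c\,\}$. From $(f\circ g)_1=f_1\circ g_1$ it is then immediate that $\alpha$ is a group homomorphism, and $\beta$ is a homomorphism since it is just translation, $\beta(a)\circ\beta(a')=\beta(a+a')$. The map $\eta$, on the other hand, is only a map of pointed sets (in fact a $1$-cocycle for the natural action of $\aut(A)$ on $A/3A$), so exactness at $\aut(A)$ is to be understood as the equality $\im\alpha=\eta^{-1}(0)$.

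Granting this, the three exactness statements are quick. The map $\beta$ is well defined: for $a\in{\sf Ann}_3(A)$ we have $1_A\in\aut(A)$ and $3a+1_A(c)=c$, so $a+1_A\in\aute({}_cA)$; and $\beta$ is injective because the affine maps $a+1_A$ and $a'+1_A$ agree only if $a=a'$ (evaluate at $0$). This is exactness at ${\sf Ann}_3(A)$. For exactness at $\aute({}_cA)$: clearly $\alpha(a+1_A)=1_A$, so $\im\beta\subseteq\ker\alpha$; conversely, an element of $\ker\alpha$ is an automorphism of the form $f_0+1_A$, whose compatibility condition reads $3f_0+c=c$, i.e. $f_0\in{\sf Ann}_3(A)$, so that $f_0+1_A=\beta(f_0)$.

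Finally, exactness at $\aut(A)$ is where the compatibility identity does the work. Given $g\in\aut(A)$, we have $g=\alpha(f_0+g)$ for some admissible $f_0$ precisely when the equation $3f_0=c-g(c)$ admits a solution $f_0\in A$, i.e. precisely when $c-g(c)\in 3A$, i.e. precisely when $\eta(g)=c-g(c)+3A$ vanishes in $A/3A$. Thus $\im\alpha=\eta^{-1}(0)$, which completes the proof. The only genuinely non-bookkeeping point is the first step — that the linear part of a bijective morphism is an automorphism of $A$, and conversely that any admissible pair with invertible linear part yields an automorphism of the elliptic group — and even that reduces to the one-line computation with the compatibility identity given above.
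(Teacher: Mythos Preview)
Your proof is correct and follows essentially the same approach as the paper's: identify automorphisms of ${}_cA$ as those affine maps whose linear part lies in $\aut(A)$, then read off the exact sequence from the compatibility condition $3f_0+f_1(c)=c$. The paper's version is much terser (it states the first claim without the verification and then says ``the result follows''), so you have simply filled in the details; your observation that $\eta$ is a $1$-cocycle rather than a homomorphism, and that exactness at $\aut(A)$ therefore means $\im\alpha=\eta^{-1}(0)$, is a useful clarification the paper leaves implicit.
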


\begin{proof} An affine map $f=f_0+f_1$ is an isomorphism if and only if $f_1$ is an isomorphism. Hence, the map $\alpha$ is well defined. We have already observed that
$$3f_0+f_1(c)=c.$$
Since the kernel of $\alpha$ corresponds to the case $f_1=1_A$, we have $3f_0=0$ and the result follows. 
\end{proof}

\begin{Co} i) If $S=\, _0A$, the group $\aute(S)$ is isomorphic to the semidirect product ${\sf Ann}_3(A) \rtimes \aut(A)$, where the group $\aut(A)$ acts on ${\sf Ann}_3(A)$ by $(g,a)\mapsto g(a)$. 

ii) If $S$ is a free elliptic group with $n$ generators, then $\aute(S)$ is isomorphic to the subgroup of $GL_n(\Z)$, consisting of invertible integral matrices $(a_{ij})$, such that $a_{11} \equiv 1 ({\sf mod}\, 3) $ and $a_{i1} \equiv 0 ({\sf mod}\, 3)$ for all $2\leq i\leq n$.
\end{Co}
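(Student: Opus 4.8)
The plan is to read both parts directly off the exact sequence
$$0\to {\sf Ann}_3(A)\xto{\beta} \aute(\, _cA)\xto{\alpha} \aut(A)\xto{\eta} A/3A$$
from the preceding Proposition, specialised to the two cases at hand.

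For part i), take $c=0$. Then $\eta(g)=0-g(0)+3A=3A$ is the zero map, so the sequence becomes a short exact sequence $0\to {\sf Ann}_3(A)\to \aute(\,_0A)\to \aut(A)\to 0$. To identify this with a semidirect product, I would exhibit an explicit splitting of $\alpha$: send $g\in\aut(A)$ to the elliptic-group automorphism $f=0+g$ of $_0A$ (it is an automorphism since its linear part $g$ is an isomorphism and the compatibility condition $3f_0+f_1(0)=0$ holds trivially with $f_0=0$). This is a group homomorphism $\aut(A)\to\aute(\,_0A)$, $g\mapsto g$, because composing two such maps gives $0+(g\circ g')$. Hence $\aute(\,_0A)\cong {\sf Ann}_3(A)\rtimes\aut(A)$. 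It remains to compute the action: conjugating $\beta(a)=a+1_A$ (the translation $x\mapsto x+a$) by $g$ gives $x\mapsto g(g^{-1}(x)+a)=x+g(a)$, i.e. $\beta(g(a))$, so the action is $(g,a)\mapsto g(a)$ as claimed. One small point to verify carefully: that $\beta$ is indeed a \emph{group} homomorphism from the additive group ${\sf Ann}_3(A)$, i.e. that $\beta(a)\circ\beta(a')=\beta(a+a')$, which is clear since translations compose additively.

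For part ii), a free elliptic group on $n$ generators is $S=\,_{e_1}\Z^n$ with $A=\Z^n$ and $c=e_1$, by the Theorem on free objects. Here ${\sf Ann}_3(\Z^n)=0$, so $\beta$ is trivial and $\alpha:\aute(S)\to\aut(\Z^n)=GL_n(\Z)$ is injective. So I must identify the image, which by exactness is $\ker\eta$, where $\eta(g)=e_1-g(e_1)+3\Z^n$. Writing $g=(a_{ij})\in GL_n(\Z)$, the first column of $g$ is $g(e_1)=(a_{11},a_{21},\dots,a_{n1})$, so $e_1-g(e_1)=(1-a_{11},-a_{21},\dots,-a_{n1})$, and this lies in $3\Z^n$ exactly when $a_{11}\equiv 1\pmod 3$ and $a_{i1}\equiv 0\pmod 3$ for $2\le i\le n$. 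Thus $\aute(S)$ is isomorphic (via $\alpha$) to the stated subgroup of $GL_n(\Z)$.

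I expect no real obstacle here: both parts are essentially unwinding the exact sequence, and the only genuine content is the explicit splitting in part i) and the conjugation computation giving the action. The mildly delicate point is to check that the set described in part ii) is actually a subgroup of $GL_n(\Z)$ — but this is automatic, since it is the preimage of $0$ under the composite $GL_n(\Z)\xrightarrow{g\mapsto g(e_1)} \Z^n \to (\Z/3\Z)^n$, $g\mapsto (1,0,\dots,0)-g(e_1)$, which one checks is a crossed-homomorphism-type condition closed under products; alternatively it is simply $\ker\eta$, and $\eta$ restricted to $\aute(S)$... wait — more cleanly, exactness of the sequence already tells us $\ker\eta$ equals the image of $\alpha$, hence is a subgroup, so nothing further is needed.
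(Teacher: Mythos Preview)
Your proposal is correct and follows essentially the same approach as the paper's own proof: in part i) you specialise to $c=0$, note $\eta=0$, and split $\alpha$ via $g\mapsto 0+g$; in part ii) you specialise to $A=\Z^n$, $c=e_1$, use ${\sf Ann}_3(\Z^n)=0$ to get injectivity of $\alpha$, and compute $\eta(a_{ij})$ explicitly to identify the image. The paper's proof is terser and omits the conjugation computation for the semidirect-product action and the small verifications you include, but the argument is the same.
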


\begin{proof} i) In this case, $\eta=0$ and $g\mapsto 0+g$ gives a splitting of $\alpha$.

ii) In this case $A=\Z^n$. Hence, ${\sf Ann}_3(A)=0$, $\aut(A)=GL_n(\Z)$ and $\eta(a_{ij})=(1-a_{11},a_{21},\cdots,a_{n1})+3A$.
\end{proof}

Let $S$ be an elliptic group and $G$ be a group. The group homomorphisms $G\to \aute(S)$ can be considered as the representations of $G$ in an elliptic group $S$. Such objects arise in the following situation: Let $\Ec$ be an elliptic curve defined over a field $K$ and assume $K\subset L$ is a finite Galois extension with Galois group $G$. We obtain a natural representation of $G$ in $ \Ec(L)$. We might come back to this situation in the future.

\section{Elliptic tensor product}\label{sec12}

Let $R,S$ and $T$ be elliptic groups. A map $f:R\times S\to T$ is called a \emph{bihomomorphism} of elliptic groups if for any $r\in R$ and $s\in S$, the maps $S\to T, x\mapsto f(r,x)$ and $R\to T, y\mapsto f(y,s)$ are homomorphisms. Denote by ${\sf Bimor}(R,S;T)$ the set of such maps. It is an elliptic group by pointwise operation and one has the following isomorphism
$$\Phi:\mor(R,{\mor}(S,T))\cong {\sf Bimor}(R,S;T).$$
Just like in the classical case, it is given by
$$\Phi(h)(r,s):=h(r)(s),$$
where $h:R\to \mor(S,T)$ is homomorphism of elliptic groups.

Let $R$ and $S$ be elliptic groups. As for abelian groups, there is an universal bihomomorphism $u:R\times S\to R\et T$, which satisfies the following property: for any bihomomorphsm $f:R\times S\to T$, there exist a unique homomorphism $\bar{f}:R\et S\to T$, such that $f= \bar{f}\circ u$. In particular, we have
$$\mor(R,\mor(S,T))\cong \mor(R\et S,T)).$$
It follows from the properties of adjoint functors, that the functor $X\mapsto R\et X$ preserves all colimits, in particular coproducts. 

\begin{Le} For any abelian groups $A$ and $B$, one has
\begin{itemize}
	\item[(i)] $_1\Z\et A\cong A$,
	\item[(ii)] $_0A\et \, _0B\cong\, _0(\Z/3\Z\oplus A/3A\oplus B/3B\oplus A\t B)$.
\end{itemize}
\end{Le}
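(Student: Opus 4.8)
The plan is to use the universal property of the elliptic tensor product together with the explicit descriptions of $\mor(-,-)$ obtained in Section~\ref{sec6}, rather than constructing $u$ by hand. The key adjunction is
$$\mor(R\et S, T)\cong \mor(R,\mor(S,T)),$$
so to identify $R\et S$ it suffices to exhibit an elliptic group $E$ and a natural isomorphism $\mor(E,T)\cong\mor(R,\mor(S,T))$, and then invoke the Yoneda lemma in $\Ell$.

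For part (i): by Lemma~\ref{mor1b}(i), $\mor(_1\Z, X)\cong X$ naturally for every elliptic group $X$. Hence $\mor(_1\Z\et A, T)\cong\mor(_1\Z,\mor(A,T))\cong\mor(A,T)$, naturally in $T$, so $_1\Z\et A$ and $A$ represent the same functor; by Yoneda they are isomorphic. (Here $A$ is shorthand for the elliptic group $_0A$, or whichever elliptic structure on $A$ is meant — I would state which; the point is that $_1\Z$ is a unit for $\et$ on the nose.) Alternatively one can check directly that $u(n,a):=\,$(the evident affine map) is universal, but the representability argument is cleaner.

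For part (ii): start from Lemma~\ref{mor0b}, which gives $\mor(_0A,\,_bB)\cong\, _0\Hom(\Z/3\Z\oplus A,B)$ when $b\in 3B$ (and $\emptyset$ otherwise), naturally in $A$. I would feed this into the adjunction:
$$\mor(_0A\et\, _0B,\, _cC)\cong\mor(_0A,\mor(_0B,\,_cC))\cong\mor\bigl(_0A,\,_0\Hom(\Z/3\Z\oplus B,C)\bigr),$$
whenever $c\in 3C$, and then apply Lemma~\ref{mor0b} a second time to rewrite this as $_0\Hom\bigl(\Z/3\Z\oplus A,\Hom(\Z/3\Z\oplus B,C)\bigr)$. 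Now use the standard hom-tensor relations for abelian groups: $\Hom(\Z/3\Z\oplus A,\Hom(\Z/3\Z\oplus B,C))\cong\Hom\bigl((\Z/3\Z\oplus A)\t(\Z/3\Z\oplus B),C\bigr)$, and expand the tensor product: $(\Z/3\Z\oplus A)\t(\Z/3\Z\oplus B)\cong \Z/3\Z\oplus(A/3A)\oplus(B/3B)\oplus(A\t B)$, using $\Z/3\Z\t\Z/3\Z\cong\Z/3\Z$, $\Z/3\Z\t A\cong A/3A$, etc. Comparing with Lemma~\ref{mor0b} once more, $\mor\bigl(_0(\Z/3\Z\oplus A/3A\oplus B/3B\oplus A\t B),\,_cC\bigr)$ has exactly the same description (the condition $c\in 3C$ matches on both sides). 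Thus the two elliptic groups represent the same functor on $\Ell$, and Yoneda gives the isomorphism.

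The main obstacle I anticipate is bookkeeping rather than conceptual: one must check that the side conditions ($b\in 3B$, $c\in 3C$) propagate consistently through both applications of Lemma~\ref{mor0b}, so that the empty-set cases on the two sides genuinely correspond and the representing object is the same \emph{functor} $\Ell^{op}\to Sets$, not merely objectwise isomorphic. A secondary point is that Lemma~\ref{mor0b} is only asserted to be natural in $A$, not in $B$ or in the target; to run the argument cleanly I would either verify the extra naturality (it is routine from the formula for $\phi$) or, to be safe, avoid Yoneda and instead construct the universal bihomomorphism $u:\,_0A\times\,_0B\to\,_0(\Z/3\Z\oplus A/3A\oplus B/3B\oplus A\t B)$ explicitly — sending $(x,y)$ to $\bigl([-1],\,\bar x,\,\bar y,\,-x\t y\bigr)$ up to a sign/constant chosen so that it is a bihomomorphism and the compatibility condition $3u_0+\cdots$ holds — and then check the universal property against an arbitrary bihomomorphism $_0A\times\,_0B\to\,_cC$ by reducing it to an affine-in-each-variable map and matching coefficients. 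Either route works; I would present the representability argument as the main proof and remark that the explicit $u$ can be written down.
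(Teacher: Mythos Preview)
The paper states this lemma without proof, so there is nothing to compare against directly; let me instead assess the argument on its own terms.

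Your treatment of part (i) is correct: Lemma~\ref{mor1b}(i) says $\mor(_1\Z,X)\cong X$ for every elliptic group $X$, so $_1\Z$ is the unit for $\et$ and the Yoneda argument goes through.

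For part (ii) your strategy is sound, but the final matching step contains a genuine error. You correctly arrive at
\[
\mor(_0A,\mor(_0B,\,_cC))\;\cong\;\, _0\Hom\bigl(\Z/3\Z\oplus A/3A\oplus B/3B\oplus A\t B,\;C\bigr)
\]
for $c\in 3C$. But when you then compare with $\mor(_0X,\,_cC)$ for $X=\Z/3\Z\oplus A/3A\oplus B/3B\oplus A\t B$, you have forgotten that Lemma~\ref{mor0b} itself inserts a copy of $\Z/3\Z$: it gives $\mor(_0X,\,_cC)\cong\,_0\Hom(\Z/3\Z\oplus X,\,C)$, which is $_0\Hom\bigl((\Z/3\Z)^2\oplus A/3A\oplus B/3B\oplus A\t B,\,C\bigr)$, one $\Z/3\Z$ too many. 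Solving $\Z/3\Z\oplus X\cong \Z/3\Z\oplus A/3A\oplus B/3B\oplus A\t B$ instead yields
\[
X\;=\;A/3A\oplus B/3B\oplus A\t B,
\]
\emph{without} the extra $\Z/3\Z$. In other words, your own computation, carried out carefully, does not prove the formula as stated.

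A direct sanity check confirms this: take $A=B=0$. Then $_0A=\,_0B={\bf 0}$, and a bihomomorphism ${\bf 0}\times{\bf 0}\to\,_cC$ is simply a flex point of $_cC$, so ${\bf 0}\et{\bf 0}\cong{\bf 0}$. The formula in the lemma would give $_0(\Z/3\Z)$, which is wrong. More generally, if you compute ${\sf Bimor}(_0A,\,_0B;\,_cC)$ directly by writing $f(a,b)=u+\alpha(a)+\beta(b)+\phi(a,b)$ with $u$ constant, $\alpha,\beta$ linear and $\phi$ bilinear, the constraints force $3u=c$, $3\alpha=0$, $3\beta=0$ and nothing on $\phi$; this matches $\mor\bigl(_0(A/3A\oplus B/3B\oplus A\t B),\,_cC\bigr)$ on the nose.

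So the gap is not in your method but in the bookkeeping of that last step --- and it reveals that the lemma as printed appears to carry a spurious $\Z/3\Z$ summand. Your concern about naturality in Lemma~\ref{mor0b} is legitimate, but the cleanest fix is exactly the one you mention as a fallback: bypass the iterated internal hom and compute ${\sf Bimor}$ directly as above, which is manifestly natural in the target and gives the corrected $X$ immediately.
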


\section{Elliptic rings}\label{sec13}

Using the tensor product, we can define monoid objects in the category $(\Ell,\et)$ of elliptic groups. We now wish to study them in more detail.

\begin{De} An associative elliptic ring is a quadruple $(A,\et, \ep,e)$, where $(A, \et)$ is an elliptic group, $\ep$ is a bilinear operation on $A$ and $e\in A$, such that the following axioms hold 
\begin{itemize}
	\item[(i)] $(x\et y)\ep z=(x\ep z)\et (y\ep z)$,
	\item[(ii)] $x\ep (y\et z)=(x\ep y)\et (x \ep z)$,
	\item[(iii)] $x\ep (y\ep z)=(x\ep y)\ep z$,
	\item[(iv)] $x\ep e=x=e \ep x$.
\end{itemize}
If additionally $x\ep y=y\ep x$, then we say that $(A,\ep,e)$ is a commutative elliptic ring.
\end{De}

For example $(\Z, \et,\ep,0)$ is a commutative elliptic ring, where $x\et y=1-x-y$ and $x\ep y= x+y-3xy$. In fact, it is the initial object in the category of elliptic rings.

\subsection{Examples of elliptic rings}

i) The endomorphisms $(\mor(S,S),\ast, \circ, \id)$ of an elliptic group $S$ is an elliptic ring under the composition $\circ$, while $\ast$ is defined by   
$$(f\ast g)(s)=f(s) \ast g(s),$$
for any $f,g\in \mor (S,S)$.

As an example, take $S=\, _1\Z$. Recall that any endomorphism of $S$ is of the form $f_a(x)=a+(1-3a)x$, for a uniquely defined $a\in \Z$ (see Lemma \ref{mor1b}). One easily checks that
$$f_a\ast f_b=f_{1-a-b}, \ \ f_a\circ f_b=f_{a+b-3ab},$$
from which we see that $x\circ y=x+y-3xy$ defines an elliptic ring structure on $_1\Z$, whose elliptic unit is $0$.

Quite similarly, any  endomorphism of $_0\Z$ is of the form $g_a(x)=ax$, for a uniquely defined $a\in \Z$ (see Lemma \ref{mor1b}). It is, once again, readily checked that
$$g_a\ast g_b=g_{-a-b}, \ \ g_a\circ g_b=f_{ab}.$$
As such, $x\circ y=xy$ defines an elliptic ring structure on $_0\Z$, whose elliptic  unit is $1$.

More generally, we have the following facts.

\begin{Le} i) Let $R$ be a ring with unit. Define
$$r\ast s:=1-r-s \ \  {\rm and} \ \  r\circ s:=r+s-3rs.$$
Then $(R,\ast,\circ,0)$ is an elliptic ring, denoted by $Ell_1(R)$.

ii) Let $R$ be a ring with unit. Define
$$r\ast s:=-r-s \ \  {\rm and} \ \  r\circ s:=rs.$$
Then $(R,\ast,\circ,1)$ is an elliptic ring, denoted by $Ell_0(R)$.
\end{Le}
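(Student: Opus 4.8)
The plan is to verify the four elliptic ring axioms directly for each of the two constructions, using the known fact (Lemma \ref{ab-el}) that $(R,\ast)$ is already an elliptic group in both cases — for $Ell_1(R)$ this is $_1R$ with $r\ast s=1-r-s$, and for $Ell_0(R)$ this is $_0R$ with $r\ast s=-r-s$. So the only new content is checking that $\circ$ is bilinear with respect to $\ast$ (axioms (i) and (ii)), that $\circ$ is associative (axiom (iii)), and that $e$ is a two-sided unit for $\circ$ (axiom (iv)). For part (ii) the situation is genuinely classical: $r\circ s=rs$, associativity and the unit axiom (iv) with $e=1$ are the ring axioms of $R$, and bilinearity $(r\ast s)\circ t=rt\ast st$ unravels to $(-r-s)t=-rt-st$, i.e. left distributivity, with the right-hand version giving right distributivity. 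So part (ii) is essentially immediate.

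For part (i), the bookkeeping is the only obstacle, and it is routine. Associativity of $\circ$: both $r\circ(s\circ t)$ and $(r\circ s)\circ t$ should expand to the symmetric expression $r+s+t-3(rs+rt+st)+9rst$; I would just expand both sides and compare. The unit axiom (iv): $r\circ 0=r+0-3r\cdot 0=r$, and symmetrically $0\circ r=r$, using that $0$ is the additive zero of $R$. Bilinearity (axiom (i)), $(r\ast s)\circ t=(r\circ t)\ast(s\circ t)$: the left side is $(1-r-s)+t-3(1-r-s)t=1-r-s+t-3t+3rt+3st$; the right side is $1-(r+t-3rt)-(s+t-3st)=1-r-s-2t+3rt+3st$; these agree since $1-r-s+t-3t=1-r-s-2t$. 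Axiom (ii) follows by the same computation with the roles of the two slots swapped, using commutativity of $+$ in $R$ (note $R$ need not be commutative, but $\circ$ is symmetric in its two arguments only when $R$ is; here axioms (i) and (ii) are checked separately and both go through since $3rt$ and $3tr$ need not coincide — one must be careful to write $(r\circ t)=r+t-3rt$ versus $(t\circ r)=t+r-3tr$ correctly).

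The one point that deserves a remark rather than a computation is that in $Ell_1(R)$ the operation $\circ$ is \emph{not} in general commutative even though the displayed example $Ell_1(\Z)$ is: commutativity of $\circ$ requires $rs=sr$, so $Ell_1(R)$ is a commutative elliptic ring precisely when $R$ is commutative, and likewise for $Ell_0(R)$. I would state this as a one-line observation after the proof, or fold it into the statement. Since the paper explicitly says such proofs "can be done directly by hand" for the analogous Lemma \ref{66}, I would present this one compactly: dispatch (ii) in two sentences by reference to the ring axioms plus distributivity, and for (i) record the three expanded identities for axioms (i)/(ii), (iii), (iv) without belaboring the arithmetic. I do not anticipate any conceptual difficulty; the only risk is a sign error in the $1-r-s$ terms, which a careful expansion eliminates.
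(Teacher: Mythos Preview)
Your direct verification is correct, and it is exactly the approach the paper has in mind: immediately after the statement the paper says ``The proofs are left as exercises to the reader,'' so there is no proof to compare against. Your computations for both parts are accurate (including the non-commutative bookkeeping for axiom (iii) in part (i), where both sides expand to $r+s+t-3rs-3rt-3st+9rst$), and your side remark on when $Ell_i(R)$ is commutative is a useful addition.
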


The proofs are left as exercises to the reader. We have just shown that $\mor (\, _1\Z, \, _1\Z)$ and $\mor (\, _0\Z, \, _0\Z)$ are isomorphic to  $Ell_1(\Z)$ and $Ell_0(\Z)$, respectively.

To compute $\mor (\, _{e_1}\Z^n, \, _{e_1}\Z^n)$, we first fix some notations. As usual, $A=(a_{ij})$ denotes a matrix and $v=(v_1,\cdot,v_n)^t$ denotes a column vector (here $t$ stands for the transpose). We set ${\sf Aff}_n(\Z)$ to be the affine version of the multiplicative monoid of $n\times n$ matrices: as a set ${\sf Aff}_n(\Z)$ is $\Z^n\times {\sf Mat}_n(\Z)$, with multiplication being given by
$$(v,A)\circ (w,B)=(v+Av,AB).$$
It has also an elliptic group structure, where $\ast$ is defined as in the product of $(n+1)$-copies of ${e_1}\Z^n$. For example, for $n=2$ we have
$$\left (\begin{pmatrix} u_1\\u_2\end{pmatrix}, \begin{pmatrix} a_{11} &a_{12}\\a_{21}&a_{22}\end{pmatrix}\right ) \ast \left (\begin{pmatrix} v_1\\v_2\end{pmatrix}, \begin{pmatrix} b_{11} &b_{12}\\b_{21}&b_{22}\end{pmatrix}\right )=
\left (\begin{pmatrix} 1-u_1-v_1\\ -u_2-v_2\end{pmatrix}, \begin{pmatrix} 1-a_{11}-b_{11} &1-a_{12}-b_{12}\\-a_{21}-b_{12}&-a_{22}-b_{22}\end{pmatrix}\right ).$$
An \emph{elliptic matrix} is a pair $(u,A)\in {\sf Aff}_n(\Z)$, for which
$$a_{11}=1-3u_1,\ {\rm and}\ \ a_{k1}=-3u_k, \ 2\leq k\leq n.$$
One can show that the set ${\sf Ell.Mat}(\Z)$ of all elliptic matrices forms an elliptic ring with respect to the above defined $\ast$ and $\circ$. This elliptic ring is isomorphic to $\mor(\, _{e_1}\Z^n \, _{e_1}\Z^n)$. This straightforward exercise is left to the reader.

\subsection{Arithmaetical properties of the elliptic ring $Ell_1(\Z)$}

Recall that $Ell_1(\Z)$ is an elliptic ring whose elements are integers and operations are defined by
$$a\ast b=1-a-b,\quad a\circ b=a+b-3ab.$$
The unit element is $e=0$. The elliptic ring $Ell_1(\Z)$ is in fact the initial object of the category of elliptic rings. Once can compare it to the initial object in the category of rings.

In this section, we aim to show that the fundamental theorem of arithmetic holds in $Ell_1(\Z)$ as well. For this, we investigate some arithmetical properties of $Ell_1(\Z)$, but first some terminology.

If $a$ and $b$ be are non-zero integers, we say that $b$ is $\circ$-divisible by $a$ if $b=a\circ c$ for an integer $c$. A non-zero integer $a$ is called $\circ$-\emph{prime}, if it has no non-trivial decomposition. That is, if $a=b\circ c$ implies $b=0$ or $c=0$.

Here are a few positive $\circ$-primes: $1,2,4,6,8,10,14,\cdots $, while $-2,-4,-6,-10, \cdots$ are negative $\circ$-primes. One can note that $12$ and $-8$ are not in the list.

The multiplicative monoid of $Ell_1(\Z)$ is obviously cancellative.  As we will see, it is, in fact, a free commutative monoid. More precisely, the following is true:

\begin{Th}\label{th1_20}  i) An integer $a$ is $\circ$-prime if and only if $p=|3a-1|$ is a prime number in the classical sense.

ii) Any nonzero integer can be decomposed as $a=q_1\circ \cdots \circ q_k$, where each $q_i$ is $\circ$-prime and such a decomposition is unique (up to reordering).
\end{Th}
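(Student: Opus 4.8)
The plan is to reduce the multiplicative structure $(\Z,\circ)$ with $a\circ b=a+b-3ab$ to ordinary multiplication of integers via an explicit monoid isomorphism, and then transport the fundamental theorem of arithmetic across it. The key observation is that $\circ$ is ``affinely conjugate'' to multiplication: if we set $\psi(a)=1-3a$, then a direct computation gives $\psi(a\circ b)=1-3(a+b-3ab)=(1-3a)(1-3b)=\psi(a)\psi(b)$, and $\psi(0)=1$, so $\psi$ is a monoid homomorphism from $(\Z,\circ,0)$ into $(\Z,\cdot,1)$. Its image is exactly the set $1+3\Z$ of integers congruent to $1$ modulo $3$, and $\psi$ is a bijection $\Z\to 1+3\Z$. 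So $(\Z,\circ,0)\cong (1+3\Z,\cdot,1)$ as commutative monoids.

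First I would record this isomorphism and note its consequences for divisibility: $b=a\circ c$ holds iff $\psi(b)=\psi(a)\psi(c)$, so $\circ$-divisibility in $\Z$ corresponds precisely to ordinary divisibility \emph{within the submonoid} $1+3\Z$. The ``zero'' $0$ of $Ell_1(\Z)$ maps to $\psi(0)=1$, the multiplicative unit, so the ``trivial'' factors $b=0$ or $c=0$ in the definition of $\circ$-prime correspond to $\psi$-value $1$; thus $a$ is $\circ$-prime iff $\psi(a)=1-3a$ is irreducible in the monoid $(1+3\Z,\cdot)$ in the sense of having no factorization into two elements of $1+3\Z$ both different from $1$. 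Note $\psi(a)$ can be negative ($\psi$ hits all of $1+3\Z$, including $-2,-5,-8,\dots$), which matches the sign phenomena in the statement.

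Next I would prove part (i): $a$ is $\circ$-prime iff $|1-3a|=|3a-1|$ is a rational prime. An element $m\in 1+3\Z$ is reducible in $1+3\Z$ iff $m=m_1m_2$ with $m_1,m_2\in 1+3\Z$, $m_i\neq 1$. Since $-1\notin 1+3\Z$, the only unit of the monoid $1+3\Z$ is $1$ itself (the element $-1$, while a unit of $\Z$, is not available as a factor here). So reducibility in $1+3\Z$ is equivalent to: the \emph{integer} $m$ admits a factorization $m=m_1m_2$ in $\Z$ with $m_1\equiv m_2\equiv 1\pmod 3$ and $|m_i|\neq 1$. If $|m|=|1-3a|=p$ is prime, then in $\Z$ the only factorizations are $p=1\cdot p=(-1)(-p)$; in the first, one factor is $\pm1$; in the second, $-1\notin 1+3\Z$; so no valid factorization exists and $a$ is $\circ$-prime. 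Conversely if $|m|=p_1p_2\cdots$ is composite (including $|m|=1$, where $m=\pm1$ but $-1\notin 1+3\Z$ forces $m=1$, the trivial element — handle this degenerate case separately by noting $\psi(0)=1$ is excluded from being prime by definition), one shows a proper factorization respecting the congruence exists: write $|m|=rs$ nontrivially; since $m\equiv1\pmod3$ the number of prime factors (with multiplicity) congruent to $2\pmod3$ is even, and the sign of $m$ times the product over the $2\bmod 3$ primes can be balanced — concretely, group the prime factors so that each of $r,s$ (with appropriate signs, using that $(-p)(-q)=pq$ lets us absorb an even number of sign flips) lies in $1+3\Z$. This combinatorial bookkeeping on residues mod $3$ and signs is the main obstacle, and I expect it to require a short case analysis on how many prime factors of $m$ are $\equiv2\pmod3$ and on $\operatorname{sgn}(m)$.

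Finally, part (ii) follows formally. Existence: given nonzero $a$, $\psi(a)=1-3a\in 1+3\Z$; factor the integer $1-3a$ into rational primes and, by the sign/residue bookkeeping just described, regroup into a product $q_1'\cdots q_k'$ with each $q_i'\in 1+3\Z$ and $|q_i'|$ prime; pulling back through $\psi^{-1}$ gives $a=q_1\circ\cdots\circ q_k$ with each $q_i$ $\circ$-prime by part (i). Uniqueness: if $q_1\circ\cdots\circ q_k=q_1'\circ\cdots\circ q_\ell'$, apply $\psi$ to get $\prod(1-3q_i)=\prod(1-3q_j')$ in $\Z$; each $1-3q_i$ has prime absolute value, so by unique factorization in $\Z$ the multisets $\{\pm(1-3q_i)\}$ agree up to signs; since all these factors lie in $1+3\Z$ and $-1\notin 1+3\Z$, no sign ambiguity survives, so the multisets $\{1-3q_i\}$ and $\{1-3q_j'\}$ coincide, whence $k=\ell$ and the $q_i$ agree up to order. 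This completes the proof, the only nonroutine ingredient being the mod-$3$/sign regrouping lemma used in the existence half of (i) and (ii).
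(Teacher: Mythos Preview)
Your proposal is correct and follows essentially the same route as the paper: the paper's map $\Sigma(a)=1-3a$ is your $\psi$, its Lemma (that any factorization $\Sigma(a)=mn$ in $\Z$ can be sign-adjusted so both factors lie in $1+3\Z$) is exactly your ``mod-$3$/sign regrouping lemma,'' and both existence and uniqueness in part (ii) are obtained by pushing the ordinary prime factorization of $1-3a$ through $\Sigma^{-1}$. Your framing of $\psi$ as a monoid isomorphism $(\Z,\circ,0)\cong(1+3\Z,\cdot,1)$ is a clean way to say what the paper does with $\Sigma$, but the mathematical content is identical.
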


The proof, given below, is based on the properties of the map $\Sigma:\Z\to \Z$, defined by $\Sigma(a)=1-3a$. One easily checks that it is injective and
$$\Sigma(a\circ b)=\Sigma(a)\Sigma(b).$$
Moreover, the image of this map is exactly the coset $1+3\Z$. In other words, it is the set of integers which are congruent to $1$ modulo $3$.

\begin{Le}\label{le133_20} If $\Sigma(a)=mn$, then either $m=\Sigma(b)$, $n=\Sigma(c)$  or $m=-\Sigma(b)$, $n=-\Sigma(c)$ for some $b,c\in \Z$.
\end{Le}

\begin{proof} Since $mn=\Sigma(a)\equiv 1\, {\rm ( mod \ 3)}$, it follows that either $m\equiv n \equiv 1\, {\rm ( mod \ 3)}$ or $m\equiv n \equiv 2\, {\rm ( mod \ 3)}$. In the first case, $m$ and $n$ are in the image of $\Sigma$. In the second one, $-m$ and $-n$ are in the image of $\Sigma$.
\end{proof}

\begin{proof}[Proof of Theorem \ref{th1_20}] i) Assume $p=|\Sigma(a)|$ is prime and $a=b\circ c$. Then  $p=|\Sigma(b)|\cdot |\Sigma(c)|$. Thus, $\Sigma(b)=\pm1$ or $\Sigma(c)=\pm 1$. So, $b=0$ or $c=0$ and $a$ is $\circ$-prime.

Conversely, assume $a$ is $\circ$-prime. If $\Sigma(a)=mn$, then either $m=\Sigma(b), n=\Sigma(c)$ or  $m=-\Sigma(b), n=-\Sigma(c)$, thanks to Lemma \ref{le133_20}. Hence, $\Sigma(a)=\Sigma(b\circ c)$ in both cases. Since $\Sigma$ is injective, we obtain $a=b\circ c$, which yields $b=0$ or $c=0$. Thus, $m=\pm 1$ or $n=\pm 1$. Hence $p$ is prime.

ii) For the existence, take any non-zero number $a$ and consider the prime decomposition of $\Sigma(a)$
$$\Sigma(a)=up_1\cdots p_{k}q_1\cdots q_n,$$
where $u=\pm 1$, $p_1,\cdots, p_k$ are usual primes which are congruent to $1$ modulo $3$, while $q_1,\cdots, q_n$ are usual primes which are congruent to $2$ modulo $3$. Such a decomposition exists and is unique, since $3$ does not divide $\Sigma(a)$. By Lemma \ref{le133_20}, we have
$$p_i=\Sigma(b_i), q_j=-\Sigma(c_j).$$
Since $\Sigma(a)$ is congruent to $1$ modulo $3$, it follows that $n$ is even. Therefore, we have
$$\Sigma(a)=\Sigma(b_1\circ \cdots \circ b_kc_1\circ \cdots \circ c_n)$$
and the injectivity of $\Sigma$ gives us $a=b_1\circ \cdots \circ b_kc_1\circ \cdots \circ c_n$. This is a $\circ$-prime decomposition, thank to part i).

For the uniqueness, observe that if $q_1\circ \cdots \circ q_k= s_1\circ \cdots \circ s_n$ are two decompositions by $\circ$-primes integers, we obtain
$$\Sigma(q_1)\cdots \Sigma(q_k)=\Sigma(s_1)\cdots \Sigma(s_n).$$
This implies $|\Sigma(q_1)|\cdots |\Sigma(q_k)|=|\Sigma(s_1)|\cdots |\Sigma(s_n)|$. By part i), each term is prime. It follows that $k=n$ and after reindexing $|\Sigma(q_i)|=|\Sigma(s_i)|$. This implies $\Sigma(q_i)=\pm \Sigma(s_i)$. By considering this equality modulo $3$, we see that the sign must be $+1$ and the uniqueness follows from the injectivity of $\Sigma$.
\end{proof}

As a corollary of the theorem, we see that the only odd number which is $\circ$-prime is $1$ and an even number $2k$ is $\circ$-prime if and only if $|6k-1|$ is prime. Based on this, we can give an ``elementary proof'' of the fact that there are infinitely many primes of the form $6t-1$ (a particular case of Dirchlet's famous theorem). By our theorem, this is equivalent to the fact that there are infinitely many $\circ$-primes. This fact can be proven in exactly the same way as Euclid did for usual primes, we simply replace the expression $p_1\cdots p_k+1$ in that argument by $p_1\circ \cdots \circ p_k\ast 0$ and use Lemma \ref{2le_18} and the fact that the equation $x\circ y=0$ has only the trivial solution $x=y=0$.

\begin{Le}\label{2le_18} Assume $a$ is a non-zero integer. If $b$ and $b\ast c$ are $\circ$-divisible by $a$, then $c$ is $\circ$-divisible by $a$.
\end{Le}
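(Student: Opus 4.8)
The plan is to transport the whole problem through the injective map $\Sigma\colon\Z\to\Z$, $\Sigma(a)=1-3a$, which, as already recorded, turns $\circ$ into ordinary multiplication and has image exactly $1+3\Z$. The one extra ingredient I need is the companion identity describing how $\Sigma$ interacts with $\ast$, namely
$$\Sigma(x\ast y)=-\Sigma(x)-\Sigma(y),$$
which is immediate since $\Sigma(1-x-y)=1-3(1-x-y)=3x+3y-2=-(1-3x)-(1-3y)$.

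Next I would unwind the hypotheses. By assumption there are integers $b_1$ and $d$ with $b=a\circ b_1$ and $b\ast c=a\circ d$. Applying $\Sigma$ and using the two identities above gives $\Sigma(b)=\Sigma(a)\,\Sigma(b_1)$ and $-\Sigma(b)-\Sigma(c)=\Sigma(a)\,\Sigma(d)$. Eliminating $\Sigma(b)$ yields
$$\Sigma(c)=-\Sigma(a)\bigl(\Sigma(b_1)+\Sigma(d)\bigr),$$
so that $\Sigma(c)=\Sigma(a)\,m$ in $\Z$, where $m:=-\bigl(\Sigma(b_1)+\Sigma(d)\bigr)$ is an integer.

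It remains to check that $m$ itself lies in the image of $\Sigma$, i.e. that $m\equiv 1\pmod 3$; then $m=\Sigma(c_1)$ for a (unique) integer $c_1$, whence $\Sigma(c)=\Sigma(a)\Sigma(c_1)=\Sigma(a\circ c_1)$, and injectivity of $\Sigma$ gives $c=a\circ c_1$, which is exactly the claim that $c$ is $\circ$-divisible by $a$. The congruence is forced: both $\Sigma(c)$ and $\Sigma(a)$ lie in $1+3\Z$, so reducing $\Sigma(c)=\Sigma(a)\,m$ modulo $3$ gives $1\equiv m\pmod 3$. (Here the hypothesis $a\neq 0$ is what guarantees the conclusion is even meaningful: if $c$ were $0$ then $\Sigma(c)=1$, forcing $\Sigma(a)=\pm1$, hence $\Sigma(a)=1$ and $a=0$, a contradiction; so automatically $c\neq0$.)

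I do not expect any real obstacle: essentially all of the work is in choosing the correct invariant $\Sigma$ and noting its behaviour on both $\circ$ and $\ast$. The only non-bookkeeping step is the modulo-$3$ verification that the ``quotient'' $m$ again has the form $\Sigma(c_1)$, and that is a one-line congruence argument using $\operatorname{image}(\Sigma)=1+3\Z$.
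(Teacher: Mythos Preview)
Your argument via $\Sigma$ is correct: transporting $\circ$ to ordinary multiplication and $\ast$ to $(u,v)\mapsto -u-v$, then checking the ``quotient'' $m$ lies in $1+3\Z$ by reducing modulo $3$, does the job. (The parenthetical about $c\neq 0$ is fine but not strictly needed for the factorisation $c=a\circ c_1$; it just reconciles with the paper's convention that $\circ$-divisibility is only defined for non-zero integers.)

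The paper takes a much shorter, purely axiomatic route. Using EG2 and the distributive law $a\circ(x\ast y)=(a\circ x)\ast(a\circ y)$ for the elliptic ring, one simply writes
\[
c=b\ast(b\ast c)=(a\circ x)\ast(a\circ y)=a\circ(x\ast y),
\]
and the desired $\circ$-factorisation of $c$ appears directly as $x\ast y$, with no passage through $\Sigma$ and no congruence check. Your approach has the virtue of being explicit in ordinary arithmetic (and of exhibiting the ``prime-like'' behaviour of $\Sigma(a)$), but the paper's proof shows that the lemma is really a formal consequence of the elliptic-ring axioms alone, valid in any such ring and not special to $Ell_1(\Z)$.
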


\begin{proof} If $b=a\circ x$ and  $b\ast c=a\circ y$, we obtain
$$c=b\ast (b\ast c)=(a\circ x)\ast (a\circ y)=a\circ (x\ast y),$$
hence the result.
\end{proof}

\end{document}